\newcommand{\A}{{\mathbb{A}}}
\newcommand{\C}{{\mathbb{C}}}
\newcommand{\G}{{\mathbb{G}}}
\renewcommand{\H}{{\mathbb{H}}}
\newcommand{\I}{{\mathbb{I}}}
\renewcommand{\L}{{\mathbb{L}}}
\newcommand{\N}{{\mathbb{N}}}
\newcommand{\Q}{{\mathbb{Q}}}
\newcommand{\R}{{\mathbb{R}}}
\newcommand{\Z}{{\mathbb{Z}}}
\newcommand{\Sb}{{\mathbf{S}}}
\newcommand{\Zb}{{\mathbf{Z}}}
\newcommand{\Dcal}{{\mathcal{D}}}
\newcommand{\Ecal}{{\mathcal{E}}}
\newcommand{\del}{\partial}
\newcommand{\id}{{\textup{id}}}
\newcommand{\num}{\textup{num}}
\newcommand{\Coh}{\textup{Coh}}
\newcommand{\fd}{\textup{fd}}
\newcommand{\rank}{\textup{rank}}
\DeclareMathOperator{\Aut}{Aut}
\DeclareMathOperator{\Cl}{Cl}
\DeclareMathOperator{\disc}{disc}
\DeclareMathOperator{\GL}{GL}
\DeclareMathOperator{\Hom}{Hom}
\DeclareMathOperator{\sheafhom}{\mathcal{H}\kern -.5pt \emph{om}}
\DeclareMathOperator{\Mat}{Mat}
\DeclareMathOperator{\Out}{\textup{Out}}
\DeclareMathOperator{\Rep}{Rep}
\DeclareMathOperator{\SL}{SL}
\DeclareMathOperator{\SO}{SO}
\DeclareMathOperator{\SU}{SU}
\DeclareMathOperator{\Sym}{Sym}
\DeclareMathOperator{\tr}{tr}
\newcommand{\git}{\mathbin{
  \mathchoice{/\mkern-6mu/}% \displaystyle
    {/\mkern-6mu/}% \textstyle
    {/\mkern-5mu/}% \scriptstyle
    {/\mkern-5mu/}}}% \scriptscriptstyle
\newcommand{\tl}{\triangleleft}
\theoremstyle{plain}
		\newtheorem{theorem}{Theorem}[section]
		\newtheorem{lemma}[theorem]{Lemma}
		\newtheorem{corollary}[theorem]{Corollary}
		\newtheorem{proposition}[theorem]{Proposition}
		\newtheorem{problem}[theorem]{Problem}
		\newtheorem*{nclaim}{Claim}
\theoremstyle{definition}
		\newtheorem{definition}[theorem]{Definition}
		\newtheorem{def-prop}[theorem]{Definition-proposition}
		\newtheorem{example}[theorem]{Example}
\theoremstyle{remark}
		\newtheorem*{remark}{Remark}
\DeclareMathOperator{\Ort}{O}
\DeclareMathOperator{\Pin}{Pin}
\DeclareMathOperator{\Spin}{Spin}
\begin{document}
\title[Surfaces, braids, Stokes matrices,
and points on spheres]{Surfaces, braids, Stokes matrices,\\
and points on spheres}

\author[Yu-Wei Fan]{Yu-Wei Fan}

\address{Department of Mathematics,
University of California, Berkeley,
Office: 735 Evans Hall,
Berkeley, CA 94720-3840, USA}

\email{ywfan@berkeley.edu}

\author[Junho Peter Whang]{Junho Peter Whang}

\address{Department of Mathematics,
Massachusetts Institute of Technology, 
Office: Building 2 Room 2-238a, 77 Massachusetts Avenue
Cambridge, MA 02139-4307, USA}

\email{jwhang@mit.edu}

\date{\today}

\maketitle

\begin{abstract}

Moduli spaces of points on $n$-spheres carry natural actions of braid groups. For $n=0$, $1$, and $3$, we prove that these symmetries extend to actions of mapping class groups of positive genus surfaces, by establishing exceptional isomorphisms with certain moduli of local systems. This relies on the existence of group structure for spheres in these dimensions. We also use the connection to demonstrate that the space of rank 4 Stokes matrices with fixed Coxeter invariant of nonzero discriminant contains only finitely many integral braid group orbits.
\end{abstract}

\setcounter{tocdepth}{1}
\tableofcontents

\section{Introduction} \label{sect:1}
\subsection{\unskip}
Let $S(m)\subset\A^m$ be the complex affine hypersurface $x_1^2+\cdots+x_m^2=1$. Define the \emph{moduli space of $r$ points on $S(m)$} to be
$$A(r,m)=S(m)^r\git\SO(m),$$
the geometric invariant theory quotient of $S(m)^r$ by the diagonal action of $\SO(m)$. The binary operation
$u\tl v=s_u(v)=2\langle u,v\rangle u -v$,
where $\langle\cdot,\cdot\rangle$ is the standard bilinear form on $\A^m$, equips $S(m)$ with the structure of a quandle (see Definition \ref{quandle} and Proposition \ref{spherequand}). The quandle structure on $S(m)$ endows $A(r,m)$ with a natural action of the braid group $B_r$ on $r$ strands. We define the \emph{Coxeter invariant} to be the morphism
$$c\colon A(r,m)\to\Pin(m)\git\SO(m)$$
mapping the class of each sequence $(u_1,\dots,u_r)\in S(m)^r$ to the class of the product $u_1\otimes\dots\otimes u_r\in\Pin(m)$ under the natural embedding of $S(m)$ in the pin group $\Pin(m)$. This construction and terminology are motivated by the notion of pseudo Coxeter element introduced in \cite[Section 3]{bries} in the general setting of quandles or racks. It can be observed (Proposition \ref{sphere-coxeter}) that the Coxeter invariant is invariant under the $B_r$-action on $A(r,m)$.

It is classical that the standard unit sphere $S^{m-1}\subset\R^m$ admits the structure of a topological group for $m=1,2,4$, as the norm on units in $\R,\C,\H$ respectively. In the first part of this paper, we use an algebraic version of this fact this to establish exceptional isomorphisms between moduli of points on $S(m)$ for $m=1,2,4$ and certain moduli spaces of local systems on surfaces. Let $\Sigma_{g,n}$ be a surface of genus $g$ with $n$ boundary curves. Given a complex reductive algebraic group $G$, let $$X(\Sigma_{g,n},G)=\Hom(\pi_1(\Sigma_{g,n}),G)\git G$$
be the coarse moduli space of $G$-local systems on $\Sigma_{g,n}$. It carries an action of the pure mapping class group $\Gamma_{g,n}$ of the surface. There is a $\Gamma_{g,n}$-invariant morphism
$$X(\Sigma_{g,n},G)\to (G\git G)^n$$
assigning to each $G$-local system the sequence of its monodromy classes along the boundary curves of $\Sigma_{g,n}$ with orientations inherited from $\Sigma_{g,n}$. If $n\in\{1,2\}$, the braid group $B_{2g+n}$ embeds into $\Gamma_{g,n}$ as a subgroup generated by Dehn twists along a suitable chain of simple loops in $\Sigma_{g,n}$ (see Section \ref{sect:4.2}), leading to an action of $B_r$ on the moduli space $X(\Sigma_{g,n},G)$.

\begin{theorem}
\label{mainthm}
Let $r\geq3$, and let $\Sigma_{g,n}$ be a surface of genus $g=\lfloor(r-1)/2\rfloor$ with $n=r-2g\in\{1,2\}$ boundary curves. We have $B_r$-equivariant isomorphisms of complex algebraic varieties
\begin{enumerate}
    \item $A'(r,1)\simeq X(\Sigma_{g,n},\mu_2)$,
    \item $A(r,2)\simeq X(\Sigma_{g,n},\G_m)$, and
    \item $A(r,4)\simeq X(\Sigma_{g,n},\SL_2)$
\end{enumerate}
where the Coxeter invariant of a point on the left hand side determines the boundary monodromy of the corresponding local system, and vice versa. In particular, the action of $B_r$ on the left hand side extends to an action of $\Sigma_{g,n}$.
\end{theorem}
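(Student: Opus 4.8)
The plan is to turn the three cases into instances of one computation by first recording the classical group structures on the spheres of dimension $0,1,3$ and translating the quandle operation, the $\SO(m)$-symmetry, the braid action and the Coxeter invariant into group-theoretic terms. Over $\C$ there are isomorphisms of algebraic groups $S(1)\simeq\mu_2$, $S(2)\xrightarrow{\sim}\G_m$ via $(x_1,x_2)\mapsto x_1+\sqrt{-1}\,x_2$, and $S(4)\xrightarrow{\sim}\SL_2$ via the standard isomorphism $\H\otimes_\R\C\simeq\Mat_2(\C)$ carrying the quaternion norm to the determinant, so that the norm-one quaternions become $\SL_2$. Using $\langle u,v\rangle=\tfrac12(u\bar v+v\bar u)$ and $N(u)=1$ one checks $s_u(v)=2\langle u,v\rangle u-v=u\bar v u$, which becomes $u\tl v=uv^{-1}u$ in each of the three groups (this is the trivial quandle $u\tl v=v$ when $G=\mu_2$, and $u\tl v=u^2v^{-1}$ when $G=\G_m$). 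Simultaneously one identifies the symmetry: $\SO(1)$ is trivial, while the variant $A'(r,1)$ quotients $S(1)^r$ further by $\Ort(1)$ acting through the global sign; $\SO(2)\simeq\G_m$ acts on $S(2)\simeq\G_m$ by translation; and $\SO(4)\simeq(\SL_2\times\SL_2)/\mu_2$ acts on $S(4)\simeq\SL_2$ by $q\mapsto aqb^{-1}$.

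Next I present both moduli spaces uniformly as $G^{r-1}\git G$. On the left, the map $(u_1,\dots,u_r)\mapsto(w_1,\dots,w_{r-1})$ with $w_i=u_iu_{i+1}^{-1}$ identifies $S(m)^r$ modulo the $b$-part of the symmetry with $G^{r-1}$, carrying the residual $a$-part to simultaneous conjugation; hence $A'(r,1)\simeq\mu_2^{r-1}$, $A(r,2)\simeq\G_m^{r-1}$, and $A(r,4)\simeq\SL_2^{r-1}\git\SL_2$. On the right, since $n\ge1$ the group $\pi_1(\Sigma_{g,n})$ is free of rank $2g+n-1=r-1$; taking generators $x_1,\dots,x_{r-1}$ adapted to the chain $c_1,\dots,c_{r-1}$ of simple loops of Section \ref{sect:4.2} gives $X(\Sigma_{g,n},G)=G^{r-1}\git G$, which for $G=\mu_2,\G_m$ carries no residual quotient and for $G=\SL_2$ is $\SL_2^{r-1}\git\SL_2$. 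Thus the underlying varieties agree via $\rho(x_i)\leftrightarrow w_i$, and the content of the theorem is that this identification intertwines the braid actions and the two distinguished invariant morphisms.

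The crux is $B_r$-equivariance. On the left, $B_r$ acts by the quandle Hurwitz moves $\sigma_i\colon(\dots,u_i,u_{i+1},\dots)\mapsto(\dots,u_i\tl u_{i+1},u_i,\dots)$; substituting $u_i\tl u_{i+1}=u_iu_{i+1}^{-1}u_i$ and passing to the $w$-coordinates, $\sigma_i$ becomes the local move $w_{i-1}\mapsto w_{i-1}w_i^{-1}$, $w_i\mapsto w_i$, $w_{i+1}\mapsto w_iw_{i+1}$, fixing all other $w_j$. On the right, $B_r=\langle T_{c_1},\dots,T_{c_{r-1}}\rangle$ acts through $\Gamma_{g,n}$, and $T_{c_i}$ acts on $(\rho(x_1),\dots,\rho(x_{r-1}))$ by a Picard--Lefschetz-type substitution: it fixes the monodromy $\rho(x_i)$ around $c_i$ and alters the monodromies around the generators that cross $c_i$ by left or right multiplication with $\rho(x_i)^{\pm1}$. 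One must normalize the basepoint and the $x_j$ so that, under $\rho(x_j)=w_j$, the topological substitution $(T_{c_i})_*$ reproduces the move above for every $i$ simultaneously; the inversion in $u\tl v=uv^{-1}u$ (rather than a plain conjugation) is exactly what the twist produces, because the chain curves meet the $x_j$ with the relevant intersection signs. I expect the management of these signs and of the successive chain relations to be the hardest point, and the cleanest way to run it is by induction on $r$: the base case $r=3$ is $B_3\simeq\Gamma_{1,1}$ (so there is nothing to extend), and the inductive step corresponds to stabilizing $\Sigma_{g,n}$ by one handle or one boundary component while appending one strand, so that only the new generator $T_{c_r}$ and coordinate $w_r$ need be checked against the established case.

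Finally I match the Coxeter invariant with the boundary monodromy. The image in $\Ort(m)$ of $[u_1\otimes\cdots\otimes u_r]$ is the pseudo Coxeter element $s_{u_1}\cdots s_{u_r}$ of \cite{bries}; expanding with $s_us_{u'}(v)=u\bar u'\,v\,\bar u' u$ and using $\bar u_i=u_i^{-1}$ writes its class in $\Pin(m)\git\SO(m)$ in the $w$-coordinates: for $m=4$ with $r$ even, the pair of conjugacy classes of the alternating word $w_1w_3\cdots w_{r-1}$ and a companion word in the two factors of $\Spin(4)=\SL_2\times\SL_2$; for $r$ odd, a single such conjugacy class in the nontrivial coset $\Pin(4)\setminus\Spin(4)$; and for $m=1,2$, an element of $\mu_2$ or $\G_m$. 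On the other side the boundary-monodromy morphism records the conjugacy classes of $\rho$ along the $n$ boundary curves, each a word in the $x_j$ fixed by the defining relation of $\Sigma_{g,n}$; matching these via $\rho(x_j)=w_j$ is a finite check once the chain is normalized as in the previous step, and shows that the Coxeter invariant of a point determines and is determined by the boundary monodromy of the associated local system. Since $B_r\subset\Gamma_{g,n}$, transporting the full $\Gamma_{g,n}$-action on $X(\Sigma_{g,n},G)$ across the isomorphism then yields the asserted extension of the $B_r$-action, completing the proof.
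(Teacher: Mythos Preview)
Your outline is correct and follows essentially the same route as the paper: identify the sphere quandle with the core quandle $u\tl v=uv^{-1}u$ for $G\in\{\mu_2,\G_m,\SL_2\}$, pass from $G^r\git(G\times G)$ to $G^{r-1}\git G$ via $w_i=u_iu_{i+1}^{-1}$, and identify the latter with $X(\Sigma_{g,n},G)$ using a free basis of $\pi_1$. Your formula for $\sigma_i$ in the $w$-coordinates is exactly the paper's Proposition~\ref{cd}.

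Two points of execution differ. First, for the $B_r$-equivariance on the surface side you propose an induction on $r$ by stabilization of the surface; the paper instead fixes once and for all a \emph{hyperelliptic} sequence of generators $(\alpha_1,\dots,\alpha_{r-1})$ (Figure~\ref{fig1}) and computes each Dehn twist $\tau_i$ directly on this basis, obtaining $\tau_i(\alpha_{i-1})=\alpha_{i-1}\alpha_i^{-1}$, $\tau_i(\alpha_i)=\alpha_i$, $\tau_i(\alpha_{i+1})=\alpha_i\alpha_{i+1}$, which matches your $w$-move on the nose. This direct check is short and avoids the bookkeeping of your inductive stabilization (which alternates between adding a boundary and closing off a handle and would require re-normalizing the chain at each step). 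Second, your treatment of the Coxeter/boundary matching is a sketch; the paper organizes it by introducing the auxiliary group $G^{[2]}=(G\times G)\rtimes C_2$, defining the Coxeter invariant of $B(r,G)$ as $[(a_1,a_1^{-1})\iota\cdots(a_r,a_r^{-1})\iota]\in G^{[2]}\git G^2$, computing its image in $W(G)^2$ or $W(G)$ explicitly (Corollary~\ref{c-coxeter}), and then reading off that the resulting words in the $\alpha_i$ are freely homotopic to the boundary curves in the ribbon picture. This makes the ``finite check'' you allude to entirely mechanical and uniform in $r$, whereas your description via $s_{u_1}\cdots s_{u_r}$ and alternating words would need the same normalization you already flagged as the hardest point.
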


Here, we define $A'(r,m)$ to be the quotient of $A(r,m)$ by the natural action of $\mu_2\simeq\Ort(m)/\SO(m)$. We will show that $A'(r,m)$ inherits the $B_r$-action and Coxeter invariant $c:A'(r,m)\to\Pin(m)\git\Ort(m)$. It is worth remarking that the definitions of $A(r,m)$, its braid group action, and Coxeter invariant have no \emph{a priori} connection to surfaces; nevertheless, topology of surfaces naturally emerges in the presence of exceptional structure on $S(m)$.

\begin{remark} The $7$-dimensional sphere $S^7\subset\R^8$, while not a topological group, carries the structure of a Moufang loop as the norm one units in the algebra of Octonions. It would be interesting to see whether this gives rise to additional structures on the moduli spaces $A_P(r,8)$. More generally, it would be desirable to understand the symmetries of the spaces $A_P(r,m)$ beyond the braid group action.

While we have stated Theorem \ref{mainthm} as isomorphisms of complex affine varieties, it can be observed that the isomorphisms can be made compatible with additional structures (e.g.~real structures) on the moduli spaces. For example, the quotient space $(S^3)^r/\SO(4,\R)$, where $S^3$ denotes the unit $3$-sphere, admits a $B_r$-equivariant isomorphism with the moduli space of $\SU(2)$-local systems on $\Sigma_{g,n}$, and hence admits an action of $\Gamma_{g,n}$.
\end{remark}

The second part of this paper concerns Stokes matrices. By definition, a \emph{Stokes matrix} of rank $r$ is an $r\times r$ unipotent upper triangular matrix. By the invariant theory of orthogonal groups, the quotient of the moduli space $A(r,r)$ by the natural action of $\mu_2\simeq\Ort(r)/\SO(r)$ can be identified with the moduli space $V(r)$ of rank $r$ Stokes matrices (see Section \ref{sect:5} for a more detailed discussion). This gives rise to a $B_r$-action on $V(r)$ and Coxeter invariant $c\colon V(r)\to\Pin(r)\git\Ort(r)$, which recover classical constructions on Stokes matrices. Under the natural map $\Pin(r)\git\Ort(r)\to\GL(r)\git\GL(r)$, the Coxeter invariant of a Stokes matrix is sent to the (conjugacy) class of the element $-s^{-1}s^T\in\GL(r)$ whose characteristic polynomial is reciprocal. Given a monic reciprocal polynomial $p$ of degree $r$, let us define
$$V_p(r)=\{s\in V(r):p(\lambda)=\det(\lambda+s^{-1}s^T)\}\subset V(r).$$
By the above remarks, each $V_p(r)$ is a $B_r$-invariant subvariety of $V(r)$. The following Diophantine problem appears to be fundamental.

\begin{problem}\label{mainproblem}
Understand the structure of the integral points on the varieties $V_p(r)$ under the braid group action and other symmetries.
\end{problem}

Here, a point on $V(r)$ is integral if the corresponding Stokes matrix has integral entries. In the simplest nontrivial case $r=3$, the Coxeter invariant associated to a Stokes matrix
$$s=\begin{bmatrix}1 & x & z\\ 0 & 1 & y\\ 0 & 0 & 1\end{bmatrix}$$
has characteristic polynomial $\det(\lambda+s^{-1}s^T)=(\lambda+1)(\lambda^2-k\lambda+1)$
where
$$k=x^2+y^2+z^2-xyz-2.$$
Thus, $V_p(3)$ is an affine cubic algebraic surface, and an argument going back to 1880 work of Markoff can be used to prove the following: if $\disc(p)\neq0$, then $V_p(3)$ contains at most finitely many integral $B_3$-orbits. Next, in the case $r=4$, given a Stokes matrix
\begin{align*}
s=
\begin{bmatrix}
1 & a & e & d\\
0 & 1 & b & f\\
0 & 0 & 1 & c\\
0 & 0 & 0 & 1
\end{bmatrix}
\end{align*}
its Coxeter invariant has characteristic polynomial
$$\det(\lambda+s^{-1}s^T)=\lambda^4-k_1k_2\lambda^3+(k_1^2+k_2^2-2)\lambda^2-k_1k_2\lambda+1$$
where $k_1$ and $k_2$ are given by
\begin{align*}
k_1+k_2&=ac+bd-ef\\
    k_1k_2&=a^2+b^2+c^2+d^2+e^2+f^2-abe-adf-bcf-cde+abcd-4.
\end{align*}
The discriminant of the above polynomial is $\Delta_k=(k_1^2-4)^2(k_2^2-4)^2(k_1^2-k_2^2)^2$. In \cite{dTdVdB}, the integral $B_r$-orbits on $V_p(4)$ were completely classified for $p(\lambda)=(\lambda-1)^4$. In this paper, we prove the following.

\begin{theorem}\label{mainthm2}
If $p(\lambda)\in\Z[\lambda]$ is a monic reciprocal polynomial of degree $4$ such that $\disc(p)\neq0$, then $V_p(4)$ contains at most finitely many integral $B_4$-orbits.
\end{theorem}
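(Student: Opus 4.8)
The plan is to transport the problem, using the exceptional isomorphism of Theorem~\ref{mainthm}, into the Diophantine theory of $\SL_2$-character varieties of surfaces with boundary, and then to run a Markoff-type reduction on integral points using only braid-group moves. Concretely, by part (3) of Theorem~\ref{mainthm} together with the identification $V(4)=A'(4,4)=A(4,4)/\mu_2$, the variety $V_p(4)$ is identified, $B_4$-equivariantly and through a finite map that matches integral Stokes matrices with integral $\SL_2$-representations satisfying a fixed congruence, with the relative $\SL_2$-character variety $X_{k_1,k_2}(\Sigma_{1,2})$ of the genus-one surface with two boundary curves, the invariants $k_1,k_2$ prescribing the two boundary monodromy traces. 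The hypothesis $\disc(p)\neq 0$ becomes the condition $k_1,k_2\neq\pm 2$ and $k_1\neq\pm k_2$, i.e.\ the two boundary monodromies are regular semisimple and mutually nonconjugate up to inversion. It thus suffices to prove that $B_4$ acts with finitely many orbits on the integral points of $X_{k_1,k_2}(\Sigma_{1,2})$.

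To set up the descent, recall that $B_4$ embeds in $\Gamma_{1,2}$ as $\langle T_{c_1},T_{c_2},T_{c_3}\rangle$ for a chain $c_1,c_2,c_3$ of simple closed curves, and that the regular neighbourhood of $c_1\cup c_2$ is a once-punctured torus $\Sigma'\subset\Sigma_{1,2}$ whose boundary $\gamma$ separates $\Sigma_{1,2}$ into $\Sigma'$ and a pair of pants. The twists $T_{c_1},T_{c_2}$ preserve $\Sigma'$ and generate its entire mapping class group $\Gamma_{1,1}=B_3$, so for an integral point $\rho$, restriction to $\Sigma'$ gives an integral point on the Markoff cubic $x^2+y^2+z^2-xyz=2+\tr\rho(\gamma)$ in trace coordinates of a standard generating pair of $\pi_1(\Sigma')$. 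Markoff's classical descent, carried out entirely through $B_3$-moves, brings $(x,y,z)$ into a region whose size depends only on $\tau=\tr\rho(\gamma)$.

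The crux is then a second descent controlling $\tau$ itself and the twisting of $\rho$ across $\gamma$ (and, in the regime $\tau=\pm 2$ where the Markoff surface degenerates, interleaved with the first descent rather than following it). Since $c_3$ crosses $\gamma$, the twist $T_{c_3}$ and its $B_3$-conjugates act nontrivially on these remaining coordinates, and the goal is to show that a suitable combination of such braid moves strictly decreases a global complexity function---for instance $\sum_\delta|\tr\rho(\delta)|$ over a fixed finite family of simple closed curves $\delta$ spanning both the torus and the twist degrees of freedom---whenever it exceeds a bound depending only on $k_1,k_2$; iterating, every integral point is moved by $B_4$ into one fixed bounded region. One then concludes by showing that the integral points of bounded complexity form only finitely many $B_4$-orbits, and this is precisely where $\disc(p)\neq 0$ is used: the excluded values $k_i=\pm 2$ and $k_1=\pm k_2$ are exactly those for which $X_{k_1,k_2}(\Sigma_{1,2})$ acquires an analogue of the singular lines on a degenerate Markoff surface, carrying infinite families of integral points, and excluding them makes the reduced locus finite modulo $B_4$.

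I expect the main obstacle to be this second descent and, more broadly, the requirement that the whole reduction be performed with braid moves alone. This is the genuinely new feature of the case $r=4$: for $r=3$ one has $\Gamma_{1,1}=B_3$, so Markoff's descent already coincides with the mapping class group descent, whereas $B_4$ sits in $\Gamma_{1,2}$ with infinite index, so one cannot invoke finiteness of $\Gamma_{1,2}$-orbits on integral points off the shelf and must instead verify that every complexity-decreasing mapping class demanded by the argument---in both the torus descent and the twist descent---can be realized inside $B_4$ (equivalently, that the $B_4$-orbit of the curve system $\{c_1,c_2,c_3,\gamma,\dots\}$ is rich enough), while simultaneously tracking the unavoidable feedback of the twist descent on the torus coordinates. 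This bookkeeping is in the spirit of the multivariable Markoff--Hurwitz reduction but demands considerably more care, and it is where the bulk of the work will lie.
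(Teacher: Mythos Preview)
Your proposal rests on a misconception that manufactures an artificial obstacle. You write that ``$B_4$ sits in $\Gamma_{1,2}$ with infinite index, so one cannot invoke finiteness of $\Gamma_{1,2}$-orbits on integral points off the shelf.'' This is false at the level of the action on the character variety: Dehn twists along the boundary curves of $\Sigma_{1,2}$ act trivially on $X(\Sigma_{1,2},\SL_2)$, and modulo these the image of $B_4$ in $\Gamma_{1,2}$ is \emph{all} of the pure mapping class group of the twice-punctured torus (a classical chain-relation fact; see the remark following Proposition~\ref{braidprop}). Hence the $B_4$-orbits on $X_k(\Sigma_{1,2},\SL_2)$ coincide with the $\Gamma(\Sigma_{1,2})$-orbits, and your ``main obstacle''---realizing every required mapping class move inside $B_4$, with attendant bookkeeping---simply evaporates. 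One \emph{can} invoke mapping class group finiteness off the shelf.

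Once this is corrected, the task reduces to proving that $X_k(\Sigma_{1,2},\SL_2)(\Z)$ has finitely many mapping class group orbits when $\Delta_k\neq 0$, and here your proposed route (a bespoke global complexity function, with interleaved Markoff descents on the torus and the twist coordinate) is plausible in outline but diverges from the paper's. The paper does not build a descent from scratch: it first invokes the general structure theorem of \cite{Whang2} (Theorem~\ref{fin} here), which already yields finiteness of orbits among points avoiding trace $\pm 2$ along every essential curve. The remaining work is then a finite case analysis of the \emph{degenerate} points---those with some essential curve of trace $\pm 2$---split according to whether that curve is separating (with trace $+2$ or $-2$) or nonseparating, and handled by restricting to $(1,1)$ and $(0,4)$ subsurfaces and computing with explicit conics (Theorem~\ref{11thm}, Lemma~\ref{dehnlem}). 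The hypothesis $\Delta_k\neq 0$ enters exactly to guarantee these conics are nondegenerate or empty. Your approach, if it could be carried through, would be more self-contained but would essentially re-derive the content of \cite{Whang2}; the paper's route is much shorter because it imports that machinery wholesale and only treats the residual degenerate locus by hand.
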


In fact, Theorem \ref{mainthm2} is a refinement, in the case $\disc(p)\neq0$, of a general structure theorem for the integral points on $V_p(4)$ for arbitrary $p$. Indeed, Theorem \ref{mainthm} allows us to view $V_p(4)$ as a finite union of moduli spaces $X_k(\Sigma_{1,2},\SL_2)$. This in turn allows us to import a general Diophantine structure theorem, proved by the second author \cite[Theorem 1.1]{Whang2}, which states that the integral points on a relative moduli space of $\SL_2$-local systems on an arbitrary surface $\Sigma_{g,n}$ belong to mapping class group orbits of finitely many points and degenerate subvarieties. The required refinement to prove Theorem \ref{mainthm2} involves analysis of degenerate integral points.

One source of integral Stokes matrices are exceptional collections in triangulated categories. In this context, Theorem \ref{mainthm2} gives restrictions on the mutation classes of Gram matrices for full exceptional collections of length $4$ in a triangulated category admitting a Serre functor. Finally, Theorem \ref{mainthm} also allows us to conceptually simplify aspects of the work of Chekhov--Mazzocco \cite{ChekhovMazzocco} on embeddings of Teichm\"uller spaces into moduli of Stokes matrices.

\subsection*{Organization of the paper} In Section \ref{sect:2}, we introduce the moduli spaces of points on spheres, braid group actions, and Coxeter invariants. In Section \ref{sect:3}, we recall the notion of core quandles for groups and prove isomorphisms between certain moduli spaces of points on spheres and certain quotient sets of product core quandles. In Section \ref{sect:4}, we introduce the moduli spaces of local systems, and prove Theorem \ref{mainthm} by way of isomorphisms obtained in Section \ref{sect:3}. In Section \ref{sect:5}, we discuss the relationship between the moduli of points on spheres and the spaces of Stokes matrices.
%proving Proposition \ref{pointstokes} and Theorem \ref{xtov}
In Section \ref{sect:6}, we discuss Diophantine aspects of the varieties $V_p(4)$ and prove Theorem \ref{mainthm2}. Finally, in Section \ref{sect:7} we consider applications of the Diophantine theorem to the study of exceptional collections.
%and prove Theorem \ref{exthm}.

\section{Moduli of points on spheres} \label{sect:2}
In this section, we introduce the moduli spaces of points on spheres and discuss their braid group actions. The section is organized as follows. In Section \ref{sect:2.braid}, we recall the definitions braid groups and quandles, and the construction of braid group actions from quandles. In Section \ref{sect:2.clifford}, we record relevant background on Clifford algebras and pin groups. Finally, in Section \ref{sect:2.spheres}, we discuss the quandle structure on spheres, introduce the moduli spaces of points on spheres and their braid group actions, and define their Coxeter invariants.

\subsection{Braids and quandles}\label{sect:2.braid}
We begin by recalling Artin's presentation of braid groups, which we take as their definition.

\begin{definition}
Let $r\geq1$ be an integer. The \emph{braid group on $r$ strands} is the group $B_r$ defined by generators $\sigma_1,\dots,\sigma_{r-1}$ subject to the following relations:
\begin{enumerate}
    \item $\sigma_i\sigma_j=\sigma_j\sigma_i$ if $|i-j|\geq2$, and
    \item $\sigma_i\sigma_j\sigma_i=\sigma_j\sigma_i\sigma_j$ if $|i-j|=1$ (braid relation).
\end{enumerate}
We will refer to the elements $\sigma_1,\dots,\sigma_{r-1}$ as the \emph{standard generators} of $B_r$.
\end{definition}

We will introduce several examples of spaces with braid group actions. This can be streamlined by the following notion.

\begin{definition}
\label{quandle}
A \emph{quandle} is a pair $(X,\triangleleft)$ consisting of a set $X$ and a binary operation $\tl\colon X\times X\to X$ such that the following three conditions hold:
\begin{enumerate}
    \item For any $x\in X$, we have $x\tl x=x$.
    \item For any $x,z\in X$ there exists a unique $y\in X$ such that $x\tl y=z$.
    \item For any $x,y,z\in X$ we have
    $$x\tl(y\tl z)=(x\tl y)\tl (x\tl z).$$
\end{enumerate}
\end{definition}

A morphism $\varphi\colon(X,\tl )\to (X',\tl')$ of quandles is a map of sets $\varphi\colon X\to X'$ such that $\varphi(x\tl y)=\varphi(x)\tl '\varphi(y)$ for all $x,y\in X$. We shall denote by $\Aut(X,\tl )$ the group of automorphisms of the quandle $(X,\tl)$. %If $x\in X$ is an element of a quandle $(X,\tl)$, then we shall denote by $L_x\in \Aut(X,\tl)$ the automorphism given by left translation by $x$: $L_x(y)=x\tl y$ for all $y\in X$.
The following classical observation shows that quandles give rise to numerous examples of spaces with braid group action.

\begin{proposition}
\label{bries}
Let $(X,\tl)$ be a quandle, and let $G$ be a group acting on $X$ via quandle automorphisms. Fix an integer $r\geq1$, and let $X^r/G$ denote the quotient set of $X^r$ by the diagonal action of $G$. There is a right action of $B_r$ on $X^r/G$, described in terms of the standard generators $\sigma_1,\dots,\sigma_{r-1}$ of $B_r$ as follows:
$$\sigma_{i}^*[x_1,\dots,x_r]=[x_1,\dots,x_{i-1},x_i\tl x_{i+1},x_i,x_{i+2},\dots,x_r]$$
for every $[x_1,\dots,x_r]\in X^r/G$.
\end{proposition}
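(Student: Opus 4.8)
The plan is to realize the prospective operators $\sigma_i^*$ first as bijections of $X^r$ itself, check that they obey Artin's relations, and only afterwards descend to the quotient by $G$.

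\emph{Step 1: lifting to $X^r$.} For each $1\le i\le r-1$ I would define $\tau_i\colon X^r\to X^r$ by
$$\tau_i(x_1,\dots,x_r)=(x_1,\dots,x_{i-1},\ x_i\tl x_{i+1},\ x_i,\ x_{i+2},\dots,x_r),$$
which alters only the two entries in positions $i$ and $i+1$. Each $\tau_i$ is a bijection: if $\tau_i(x)=\tau_i(x')$, then comparing position $i+1$ forces $x_i=x_i'$, whence comparing position $i$ gives $x_i\tl x_{i+1}=x_i\tl x_{i+1}'$, and axiom (2) of a quandle yields $x_{i+1}=x_{i+1}'$ (the remaining coordinates agreeing trivially); and given any $(y_1,\dots,y_r)$, axiom (2) furnishes the unique $w\in X$ with $y_{i+1}\tl w=y_i$, so that $\tau_i$ carries $(\dots,y_{i+1},w,\dots)$ to $(\dots,y_i,y_{i+1},\dots)$.

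\emph{Step 2: braid relations.} When $|i-j|\ge2$ the maps $\tau_i$ and $\tau_j$ modify disjoint pairs of coordinates and hence commute. The heart of the matter is the braid relation $\tau_i\tau_{i+1}\tau_i=\tau_{i+1}\tau_i\tau_{i+1}$, and this is exactly where self-distributivity enters. It suffices to check the identity on the three coordinates in positions $i,i+1,i+2$; calling these $(a,b,c)$ and applying the definition three times,
$$\tau_i\tau_{i+1}\tau_i(a,b,c)=\bigl((a\tl b)\tl(a\tl c),\ a\tl b,\ a\bigr)=\bigl(a\tl(b\tl c),\ a\tl b,\ a\bigr),$$
the final equality being the self-distributivity axiom (3), while by the same kind of computation $\tau_{i+1}\tau_i\tau_{i+1}(a,b,c)=\bigl(a\tl(b\tl c),\ a\tl b,\ a\bigr)$; the two sides coincide. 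Since the $\tau_i$ are bijections satisfying Artin's relations, and those relations are unchanged when their defining words are read in reverse (so that the identity on generators induces an isomorphism $B_r\cong B_r^{\op}$), the rule $x\cdot\sigma_i:=\tau_i(x)$ extends to a right action of $B_r$ on $X^r$, described on standard generators by precisely the displayed formula.

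\emph{Step 3: descent to $X^r/G$.} Because $G$ acts by quandle automorphisms, $g\cdot(x_i\tl x_{i+1})=(g\cdot x_i)\tl(g\cdot x_{i+1})$ for every $g\in G$, so each $\tau_i$ — and therefore every operator in the image of $B_r\to\Sym(X^r)$ — commutes with the diagonal $G$-action on $X^r$. Hence the action descends to the orbit set $X^r/G$ and is given there by the stated formula. I do not anticipate a genuine obstacle: the only care needed is in Step 2, namely keeping the composition order consistent with a \emph{right} (rather than left) action and applying axiom (3) in the correct direction.
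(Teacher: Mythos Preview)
Your proposal is correct and follows essentially the same approach as the paper: lift to bijections of $X^r$, verify the braid relations via self-distributivity (reducing to three consecutive coordinates), and descend to the quotient using that $G$ acts by quandle automorphisms. Your treatment is slightly more explicit in two places---the bijectivity argument and the remark that $B_r\cong B_r^{\op}$ justifies calling this a \emph{right} action---but the substance is identical to the paper's proof.
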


\begin{proof} This is standard; see e.g.~\cite[Proposition 3.1]{bries}.
First, we claim that the moves
$$\sigma_{i}^*(x_1,\dots,x_r)=(x_1,\dots,x_{i-1},x_i\tl x_{i+1},x_i,x_{i+2},\dots,x_r)$$
together generate an action of $B_r$ on $X^r$. It follows by condition (2) of Definition \ref{quandle} that each $\sigma_i^*$ above is a bijection of $X^r$ onto itself. It is moreover obvious that $\sigma_i^*\sigma_j^*=\sigma_j^*\sigma_i^*$ if $|i-j|\geq2$. Hence, it only remains to check the braid relations. For this, we may restrict to the case $r=3$. Note that
\begin{align*}
    &\sigma_1^*\sigma_2^*\sigma_1^*(x_1,x_2,x_3)=((x_1\tl x_2)\tl (x_1\tl x_3),x_1\tl x_2,x_1),\quad\text{while}\\
    &\sigma_2^*\sigma_1^*\sigma_2(x_1,x_2,x_3)=(x_1\tl (x_2\tl x_3),x_1\tl x_2,x_1).
\end{align*}
Hence, the braid relations follow from condition (3) in Definition \ref{quandle}. We conclude the proof by observing that the $B_r$-action on $X^r$ sends $G$-orbits to $G$-orbits, and hence descends to the quotient $X^r/G$.
\end{proof}

\begin{remark}
The reader will notice that condition (1) in Definition \ref{quandle} was not used in the proof of Proposition \ref{bries}. Omitting this condition leads to the definition of a \emph{rack} or \emph{automorphic set} (see \cite{bries}), and braid group actions can indeed be introduced in this greater generality.

Let $X$ be a rack. In \cite[Section 3]{bries}, the notion of a \emph{pseudo Coxeter element} $c_x\in\Aut(X,\tl)$ associated to a sequence $x=(x_1,\dots,x_r)\in X^r$ was introduced. It is given as the composition of left translations:
$$c_x(u)=x_1\tl(x_2\tl\dots(x_r\tl u)\dots)),\quad\text{for all $u\in X$}.$$
This construction will motivate our definitions of Coxeter invariants for moduli spaces of points on spheres, to be given in Section \ref{sect:2.spheres}.
\end{remark}

\subsection{Clifford construction}
\label{sect:2.clifford} We now record some background on Clifford algebras needed for later parts of the paper. Fix a field $F$ of characteristic zero. Let $q$ be a nondegenerate quadratic form over an $F$-vector space $V$ of finite dimension. We will denote by $\langle-,-\rangle$ the symmetric bilinear pairing on $V$ associated to $q$, given by
$$\langle u,v\rangle=\frac{q(u+v)-q(u)-q(v)}{2},\quad u,v\in V.$$
Let $\Ort(q)$ (resp.~$\SO(q)$) denote the orthogonal group (resp.~special orthogonal group) of the quadratic form $q$. Let $\Cl(q)$ be the \emph{Clifford algebra} over $F$ associated to $(V,q)$. Namely, it is the associative $F$-algebra
$$\Cl(q)=T(V)/\langle v\otimes v-q(v),v\in V\rangle$$
where $T(V)=\bigoplus_{i=0}^\infty V^{\otimes i}$ the tensor algebra of $V$. To avoid potential confusion in later parts of the paper, we will write $\otimes$ to indicate the multiplication operation in $\Cl(q)$. There is a natural $\Z/2\Z$-grading $\Cl(q)=\Cl^0(q)\oplus\Cl^1(q)$ on the Clifford algebra induced from the $\Z$-grading on $T(V)$, and there is an obvious embedding $V\to\Cl^1(q)$. The underlying vector space of $\Cl(q)$ has dimension $2^{\dim_F (V)}$ over $F$.

By functoriality, morphisms of quadratic spaces extend uniquely to morphisms of Clifford algebras. In particular, the orthogonal group $\Ort(q)$ of the quadratic form $q$ acts on $\Cl(q)$ by algebra automorphisms. If $F$ is algebraically closed, then for each integer $m\geq1$ there is up to isomorphism a unique quadratic space $(V,q)$ of dimension $m$ with $q$ nondegenerate (e.g.~take $(V_m,q_m)=(\A^m,x_1^2+\dots+x_m^2)$). We will denote the resulting Clifford algebra by $\Cl(m)$ when there is no risk for confusion, and employ similar notations for related constructions, e.g.~$\Ort(m)=\Ort(q)$.

We now introduce a subgroup of units of the Clifford algebras, called pin groups. Let $S(q)\subset V$ be the affine hypersurface defined by the equation $q(v)=1$. The embedding $V\to\Cl(q)$ induces an embedding $S(q)\subset\Cl(q)^\times$ since $u^{\otimes 2}=q(u)=1$ for every $u\in S(q)$.

\begin{definition}
The \emph{pin group} $\Pin(q)$ is the closed algebraic subgroup of $\Cl(q)^\times$ over $F$ generated by $S(q)$. We write $\Pin(q)=\Pin^0(q)\sqcup\Pin^1(q)$ where we denote $\Pin^i(q)=\Pin(q)\cap\Cl^i(q)$. We define the \emph{spin group} by $\Spin(q)=\Pin^0(q)$.
\end{definition}

By functoriality of the Clifford construction, the natural action of $\Ort(q)$ on $V$ induces an action of $\Ort(q)$ on $\Pin(q)$ by group automorphisms. Given $u\in S(q)$, let us denote by $s_u$ the linear transformation of $V$ given by
$$s_u(v)=2\langle v,u\rangle u-v\quad\text{for $v\in V$.}$$
It is straightforward to check that $s_u\in\Ort(q)$ and $s_u\circ s_u=1$ for each $u\in S(q)$. The following result is elementary but useful.

\begin{proposition}
\label{basic}
For any $u\in S(q)$ and $v\in V$, we have $s_u(v)=u\otimes v\otimes u^{-1}$.
\end{proposition}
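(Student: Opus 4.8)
The plan is to verify the identity $s_u(v) = u \otimes v \otimes u^{-1}$ by a direct computation inside the Clifford algebra $\Cl(q)$, using the defining quadratic relation. First I would record that for $u \in S(q)$ we have $u \otimes u = q(u) = 1$ in $\Cl(q)$, so $u$ is a unit with $u^{-1} = u$; hence the claimed formula simplifies to $s_u(v) = u \otimes v \otimes u$, and it suffices to prove this.

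The key step is the polarization identity in the Clifford algebra. Expanding $(u+v)\otimes(u+v) = q(u+v)$ and subtracting $u\otimes u = q(u)$ and $v\otimes v = q(v)$ gives
$$u \otimes v + v \otimes u = 2\langle u,v\rangle,$$
where the right-hand side is a scalar (an element of $F \subset \Cl(q)$). From this I solve for $v \otimes u = 2\langle u,v\rangle - u \otimes v$ and compute
$$u \otimes v \otimes u = u \otimes (2\langle u,v\rangle - v \otimes u \cdot \text{?})$$
— more carefully: multiply the polarization identity on the left by $u$ to get $u \otimes u \otimes v + u \otimes v \otimes u = 2\langle u,v\rangle u$, and since $u \otimes u = 1$ this reads $v + u \otimes v \otimes u = 2\langle u,v\rangle u$, i.e.
$$u \otimes v \otimes u = 2\langle u,v\rangle u - v = s_u(v).$$
This is exactly the desired identity, and combined with $u^{-1} = u$ it yields the proposition.

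There is essentially no obstacle here: the only subtlety worth a sentence is noting that $2\langle u,v\rangle$ is a genuine scalar in $\Cl(q)$ (so it commutes with everything and the left-multiplication by $u$ distributes correctly over it), and that $\operatorname{char} F = 0$ guarantees division by $2$ in the definition of $\langle -,-\rangle$ poses no problem. One could also remark that this computation simultaneously reproves $s_u \in \Ort(q)$ and $s_u \circ s_u = 1$, since conjugation by the unit $u$ is an algebra automorphism of $\Cl(q)$ preserving $V$ and the quadratic form, and $(u \otimes u) \otimes v \otimes (u \otimes u)^{-1} = v$; but since those facts were already asserted before the proposition, I would not belabor them.
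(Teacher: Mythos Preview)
Your proof is correct and follows essentially the same approach as the paper: both derive the polarization identity $u\otimes v + v\otimes u = 2\langle u,v\rangle$ by expanding $(u+v)^{\otimes 2}$ in $\Cl(q)$, and then use it together with $u\otimes u = 1$ to rewrite $u\otimes v\otimes u^{-1}$ as $2\langle u,v\rangle u - v$. The only cosmetic difference is that you first observe $u^{-1}=u$ and left-multiply the polarization identity by $u$, whereas the paper substitutes for $u\otimes v$ and right-multiplies by $u^{-1}$; these are the same one-line computation.
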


\begin{proof}
For each $u,v\in V\subset \Cl(q)$, we have
$$2\langle u,v\rangle=q(u+v)-q(u)-q(v)=(u+v)^{\otimes2}-u^{\otimes2}-v^{\otimes2}=u\otimes v+v\otimes u.$$
Thus, if $u\in S(q)$, then $u\otimes v\otimes u^{-1}=(2\langle u,v\rangle-v\otimes u)\otimes u^{-1}=2\langle v,u\rangle u-v$, which is the desired result.
\end{proof}

Let $\alpha\colon\Pin(q)\to\Pin(q)$ be the automorphism of $\Pin(q)$ induced by the negation automorphism $v\mapsto -v$ of $(V,q)$. Proposition \ref{basic} allows us to define the morphism $\pi\colon\Pin(q)\to\Ort(q)$ taking $g\in\Pin(q)$ to $\pi(g)\in\Ort(q)$ given by
$$\pi(g)(v)=\alpha(g)\otimes v\otimes g^{-1}\quad\text{for $v\in V$.}$$

\begin{corollary}
\label{basiccor}
We have the following.
\begin{enumerate}
    \item The morphism $\pi\colon\Pin(q)\to\Ort(q)$ is surjective, and $\pi(\Spin(q))=\SO(q)$.
    \item $S(q)\subset\Pin^1(q)$ is a Zariski closed conjugacy class in $\Pin(q)$.
\end{enumerate}
\end{corollary}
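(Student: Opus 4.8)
The plan is to prove Corollary \ref{basiccor} by leveraging Proposition \ref{basic} together with the fact that $\Pin(q)$ is generated by $S(q)$, which reduces everything to manipulations with reflections and products of elements of $S(q)$.

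For part (1), I would first observe that for $u \in S(q)$, the map $\pi(u)$ is the reflection $s_u$ up to sign. Indeed, $u \in \Cl^1(q)$, so $\alpha(u) = -u$, and hence $\pi(u)(v) = \alpha(u) \otimes v \otimes u^{-1} = -u \otimes v \otimes u^{-1} = -s_u(v)$ by Proposition \ref{basic}. Thus $\pi(u) = -s_u$, which is not itself in $\Ort(q)$ unless $\dim V$ is... wait — more carefully, $-s_u$ does lie in $\Ort(q)$ since $-\id \in \Ort(q)$ and $\Ort(q)$ is a group. Actually the cleaner route: note $\pi$ is a homomorphism (which follows directly from the formula, using that $\alpha$ is an algebra automorphism), so it suffices to hit a generating set of $\Ort(q)$. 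By the Cartan–Dieudonné theorem, $\Ort(q)$ is generated by reflections $s_u$, $u \in S(q)$ (every anisotropic vector can be rescaled to lie on $S(q)$ since $F$ has characteristic zero and is, in the cases of interest, algebraically closed; in general one works over $F$ with the understanding that $S(q)$ spans enough reflections — here I would simply invoke that $\Ort(q)$ as an algebraic group is generated by the reflections in anisotropic vectors, hence by $\{s_u : u \in S(q)\}$ after rescaling). Since $\pi(u) = -s_u$ and also $-\id = \pi(u)\pi(u') \cdots$ can be realized — more simply, for any $u \in S(q)$ we have $s_u = \pi(u) \cdot (-\id)$, and $-\id = (s_{e_1} \circ \cdots \circ s_{e_m})$ for an orthonormal basis, so $-\id \in \img(\pi)$; combined with $\pi(u) = -s_u \in \img(\pi)$ we get $s_u \in \img(\pi)$ for all $u$, hence $\img(\pi) = \Ort(q)$. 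Restricting to $\Spin(q) = \Pin^0(q)$: elements of $\Pin^0(q)$ are products of evenly many elements of $S(q)$, and $\pi$ of such a product is a product of evenly many maps $-s_u$, i.e.\ a product of evenly many reflections, which lands in $\SO(q)$; conversely $\SO(q)$ is generated by products of pairs of reflections, each of which lifts. So $\pi(\Spin(q)) = \SO(q)$.

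For part (2), that $S(q) \subset \Pin^1(q)$: each $u \in S(q)$ satisfies $u^{\otimes 2} = q(u) = 1$, so $u$ is a unit lying in $\Cl^1(q)$, hence $u \in \Pin(q) \cap \Cl^1(q) = \Pin^1(q)$. That $S(q)$ is a conjugacy class: for $u, u' \in S(q)$, the computation $g \otimes u \otimes g^{-1}$ for $g \in \Pin(q)$ should again lie in $S(q)$ — indeed if $g \in S(q)$ then by Proposition \ref{basic} (with a sign bookkeeping for the grading) $g \otimes u \otimes g^{-1} = \pm s_g(u) \in V$, and it has $q$-value $1$ since $s_g$ (and $-\id$) preserve $q$; then extend to all of $\Pin(q)$ since $\Pin(q)$ is generated by $S(q)$ and $S(q)$ is closed under conjugation by its own elements. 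For transitivity, given $u, u' \in S(q)$, by Witt's theorem (valid over any field of characteristic $\neq 2$) there is $\phi \in \Ort(q)$ with $\phi(u) = u'$; lifting $\phi$ to $\tilde\phi \in \Pin(q)$ via the surjectivity from part (1) gives $\tilde\phi \otimes u \otimes \tilde\phi^{-1} = \pm u'$, and adjusting by an element of $\Pin^1(q)$ squaring to $1$ if needed handles the sign, so $u'$ and $u$ are conjugate. Finally, Zariski-closedness: $S(q)$ is cut out inside the affine variety $\Cl(q)^\times$ (or inside $V$) by the single equation $q(v) = 1$, hence closed; intersecting with $\Pin(q)$ keeps it closed.

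The main obstacle I anticipate is the bookkeeping around the sign twist $\alpha$ and the grading: one must be careful that $\pi(u) = -s_u$ rather than $s_u$ for $u \in S(q) \subset \Cl^1(q)$, and track how these signs propagate through products, so that the image of $\Spin(q)$ comes out to exactly $\SO(q)$ and not something off by $-\id$; the key point making this work is that $-\id$ itself is the image of a suitable product of an even number of generators when $m$ is even, and of an odd product when $m$ is odd, but in either case the parity matches up correctly because $-\id = (-s_{e_1})(-s_{e_2})\cdots(-s_{e_m}) \cdot (-1)^m$ composed appropriately — I would verify this parity count carefully. The rest (Witt's theorem, Cartan–Dieudonné, the triviality of the closedness claims) is standard.
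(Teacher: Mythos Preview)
Your approach matches the paper's: Cartan--Dieudonn\'e together with $\pi(u)=-s_u$ for (1), and transitivity of $\Ort(q)$ on $S(q)$ (your Witt argument) for (2). The sign bookkeeping you worry about dissolves once you recognize that $-s_u$ \emph{is} precisely the hyperplane reflection fixing $u^\perp$ and negating $u$, so reflections are hit directly and $\det(-s_u)=-1$ immediately yields $\pi(\Spin(q))=\SO(q)$; your side computation $s_{e_1}\circ\cdots\circ s_{e_m}=-\id$ is off (it equals $(-1)^{m-1}\id$), but that detour is not needed for the argument.
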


\begin{proof}
(1) The first claim follows from the fact that the orthogonal group $\Ort(q)$ is generated by (hyperplane) reflections by the Cartan--Dieudonn\'e theorem, and the observation that every reflection on $(V,q)$ is of the form $-s_u$ for some $u\in S(q)$. Since any reflection has determinant $-1$, the second claim follows.

(2) This follows from the fact the natural action of $\Ort(q)$ on $S(q)$ is transitive. Since $S(q)$ is Zariski closed in $\Cl(q)$, \emph{a fortiori} it is Zariski closed in $\Pin(q)$.
\end{proof}

We close this subsection by discussing the structure of $\Cl(m)$ over algebraically closed fields of characteristic zero for $m=1,2,4$.

\begin{example}
\label{cliffordex}
Let $F$ be an algebraically closed field of characteristic zero.
\begin{enumerate}
    \item Let $q_1(x)=x^2$ be defined on $V_1=F$. Then $S(q_1)=\mu_2=\{\pm1\}$ has group structure induced by usual multiplication on $F$. Consider the $\Z/2\Z$-graded $F$-algebra
    $$M_1=M_1^0\oplus M_1^1=F\oplus F\iota$$
    where $\iota^2=1$. The embedding $j\colon V_1\to M_1$ given by $x\mapsto x\iota$ satisfies $$j(x)^2=(x\iota)(x\iota)=x^2\iota^2=q_1(x)$$
    for every $x\in V_1$. By the universal property of Clifford algebras, this extends to a $\Z/2\Z$-graded $F$-algebra morphism $j\colon\Cl(q_1)\to M_1$ which is an isomorphism for dimension reasons since it is clearly surjective.
    \item Let $q_2(x,y)=xy$ be defined on $V_2=F^2$. Then $$S(q_2)=\G_m=\{(x,y):xy=1\}$$
    is the multiplicative group, with group structure induced by componentwise multiplication on $F^2$. Let us consider the $\Z/2\Z$-graded $F$-algebra
    $$M_2=M_2^0\oplus M_2^1=F^2\oplus F^2\iota$$
    where $\iota^2=1$ and $\iota(a,b)=(b,a)\iota$ for every $(a,b)\in F^2$. The embedding $j\colon V_2\to M_2$ given by $(x,y)\mapsto (x,y)\iota$ satisfies
    $$j(x,y)^2=(x,y)\iota(x,y)\iota=(x,y)(y,x)\iota^2=(xy,xy)=q_2(x,y)(1,1)$$
    for every $(x,y)\in V_2$. By the universal property of Clifford algebras, this extends to a $\Z/2\Z$-graded $F$-algebra morphism $j\colon\Cl(q_2)\to M_2$ which is an isomorphism for dimension reasons since it is clearly surjective.
    \item Let $q_4(x_{ij})=x_{11}x_{22}-x_{12}x_{21}=\det(x_{ij})$ be the determinant form on the space $V_4=\Mat_2$ of $2\times2$ matrices. Then $S(q_4)=\SL_2$ has a group structure induced by the matrix algebra structure on $\Mat_2$. Consider the $\Z/2\Z$-graded $F$-algebra
$$M_4=M_4^0\oplus M_4^1=\Mat_2^2\oplus\Mat_2^2\iota$$
where $\iota^2=1$ and $\iota(a,b)=(b,a)\iota$ for every $(a,b)\in\Mat_2^2$. Given a matrix $x\in\Mat_2$, we shall denote by $\bar x$ its adjugate, so that if
$$x=\begin{bmatrix}x_{11} &x_{12}\\ x_{21} & x_{22}\end{bmatrix}\quad\text{then}\quad\bar x=\begin{bmatrix}x_{22} & -x_{12}\\ -x_{21} & x_{11}\end{bmatrix}.$$
The embedding $j\colon V_4\to M_4$ given by $x\mapsto(x,\bar x)\iota$ satisfies
$$j(x)^2=(x,\bar x)\iota(x,\bar x)\iota=(x,\bar x)(\bar x,x)\iota^2=(x\bar x,\bar xx)=\det(x)(1,1)$$
for every $x\in\Mat_2$. By the universal property of Clifford algebras, this extends to a $\Z/2\Z$-graded $F$-algebra morphism $j\colon\Cl(q_4)\to M_4$, which is an isomorphism for dimension reasons. Indeed, it suffices to observe that $j\colon\Cl(q_4)\to M_4$ is surjective. For this note that, for any $a,b\in\SL_2$,
$$(a,\bar a)(b,\bar b)(\bar a\bar b,ba)=(aba^{-1}a^{-1},1).$$
Since $\SL_2$ is perfect, it follows that $j(\Cl(q_4))$ contains elements of the form $(x,1)$ and $(1,x)$ for any $x\in\SL_2$. It is easy to deduce from this that $j(\Cl(q_4))=M_4$ since it contains the ``standard'' basis vectors of $M_4$,~e.g.
$$\left(\begin{bmatrix}1 & 0 \\ 0 & 0\end{bmatrix},0\right)=\left(\begin{bmatrix}1 & 1 \\ -1 & 0\end{bmatrix},1\right)-\left(\begin{bmatrix}0 & 1 \\ -1 & 0\end{bmatrix},1\right).$$
\end{enumerate}
\end{example}

\subsection{Moduli of points on spheres} \label{sect:2.spheres}
Fix a field $F$ of characteristic zero, and let $q$ be a nondegenerate quadratic form over an $F$-vector space $V$. As in Section \ref{sect:2.clifford}, let $S(q)$ be the affine hypersurface in $V$ defined by $q(v)=1$. The natural action of $\Ort(q)$ on $V$ preserves $S(q)$. For each $u\in S(q)$, let $s_u\in\Ort(q)$ be given by
$s_u(v)=2\langle u,v\rangle u-v$ for $v\in V$ as before.

\begin{proposition}
\label{spherequand}
The variety $S(q)$ admits a quandle structure under the operation
$$u\tl v=s_u(v)\quad\text{for $u,v\in S(q)$.}$$
With respect to this, the group $\Ort(q)$ acts on $S(q)$ by quandle automorphisms. We shall refer to $(S(q),\tl)$ as the \emph{sphere quandle} associated to $S(q)$.
\end{proposition}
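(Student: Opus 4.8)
The plan is to verify the three axioms of Definition \ref{quandle} directly; the only substantive point is self-distributivity, axiom (3), which I would deduce from the Clifford-algebraic description of reflections in Proposition \ref{basic}. First I would record that the operation is well-defined: for $u\in S(q)$ the map $s_u$ lies in $\Ort(q)$, hence preserves the hypersurface $S(q)=\{v\in V: q(v)=1\}$, so $u\tl v=s_u(v)\in S(q)$ whenever $u,v\in S(q)$. Axiom (1) is the computation $u\tl u=s_u(u)=2\langle u,u\rangle u-u=(2q(u)-1)u=u$, using $q(u)=1$. Axiom (2) follows from the fact (recorded before Proposition \ref{basic}) that $s_u\circ s_u=1$: since $s_x$ is an involution of $V$ restricting to an involution of $S(q)$, the unique $y\in S(q)$ with $x\tl y=z$ is $y=s_x(z)$.

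The heart of the argument is the equivariance identity $g\circ s_u\circ g^{-1}=s_{g(u)}$ for $g\in\Ort(q)$ and $u\in S(q)$. I would prove this either directly from the formula $s_u(v)=2\langle u,v\rangle u-v$ together with orthogonality of $g$ (so that $\langle g^{-1}w,u\rangle=\langle w,gu\rangle$), or via Proposition \ref{basic} and functoriality of the Clifford construction. Axiom (3) then follows by applying this with $g=s_x$, $u=y$, and using $s_x^{-1}=s_x$: we get $s_x\circ s_y\circ s_x=s_{s_x(y)}$, that is $s_x(s_y(z))=s_{s_x(y)}(s_x(z))$, which is exactly $x\tl(y\tl z)=(x\tl y)\tl(x\tl z)$. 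Alternatively, expanding both sides in $\Cl(q)$ via Proposition \ref{basic} shows each equals $x\otimes y\otimes z\otimes y^{-1}\otimes x^{-1}$.

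Finally, the $\Ort(q)$-action by quandle automorphisms is again immediate from the same identity: rewriting $g\circ s_u=s_{g(u)}\circ g$ gives $g(u\tl v)=g(s_u(v))=s_{g(u)}(g(v))=g(u)\tl g(v)$ for all $u,v\in S(q)$ and $g\in\Ort(q)$. I do not anticipate a genuine obstacle here: axioms (1) and (2) are formal, and the only nonformal step, axiom (3), collapses to the conjugation identity $g s_u g^{-1}=s_{g(u)}$, which simultaneously yields the automorphism claim.
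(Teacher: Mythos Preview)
Your proposal is correct and follows essentially the same approach as the paper: axioms (1) and (2) are verified identically, and both your argument and the paper's reduce axiom (3) and the $\Ort(q)$-automorphism claim to the fact that $s_u\in\Ort(q)$ preserves $\langle-,-\rangle$. The only difference is organizational: you isolate the equivariance identity $g\circ s_u\circ g^{-1}=s_{g(u)}$ as a single lemma from which both axiom (3) and the $\Ort(q)$-action follow, whereas the paper verifies each by a separate direct computation with the formula $s_u(v)=2\langle u,v\rangle u-v$; your Clifford-algebra alternative via Proposition~\ref{basic} is not used in the paper's proof but is a valid shortcut.
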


\begin{proof}
Note first that $u\tl u=s_u(u)=2\langle u,u\rangle u -u=u$ for every $u\in S(q)$. Next, since $s_u\circ s_u=\id_{V}$ for each $u\in S(q)$, the left translation $L_u=u\tl -$ on $S(q)$ is an automorphism of the variety $S(q)$. Finally, note that
$$u\tl(v\tl w)=s_u(2\langle v,w\rangle v-w)=2\langle s_u(v),s_u(w)\rangle s_u(v)-s_u(w)=(u\tl v)\tl (u\tl w)$$
for each $u,v,w\in S(q)$, since $s_u\in\Ort(q)$. This proves that $(S(q),\tl)$ is a quandle, as desired. To prove the second statement, we note that
$$(gu)\tl(gv)=s_{gu}({gv})=2\langle gu,gv\rangle gu-gv=g(2\langle u,v\rangle u-v)=gs_u(v)=g(u\tl v)$$
for every $u,v\in S(q)$ and $g\in\Ort(q)$. This gives the desired result.
\end{proof}

\begin{remark}
Let $G$ be a group, and let $S\subset G$ be a union of conjugacy classes. Then $S$ admits the structure of a quandle under the operation $u\tl v=uvu^{-1}$. Proposition \ref{basic} shows that the sphere quandle structure on $S(q)$ above agrees with the quandle structure of $S(q)$ as a conjugacy class in $\Pin(q)$.
\end{remark}

We now introduce the main objects of our study.

\begin{definition}
Let $r$ be a positive integer. The \emph{moduli space of $r$ points on $S(q)$} is the geometric invariant theory quotient
$$A(r,q)=S(q)^r\git\SO(q)$$
of $S(q)^r$ by the diagonal action of $\SO(q)$. Similarly, the \emph{moduli space of $r$ unoriented points on $S(q)$} is the geometric invariant theory quotient
$$A'(r,q)=S(q)^r\git\Ort(q)\simeq A(r,q)\git\mu_2$$
for the action of $\mu_2=\Ort(q)/\SO(q)$ on $A(r,q)$.
\end{definition}
The definition of $A(r,q)$ is functorial in the oriented quadratic space $(V,q)$. In particular, if the base field $F$ is algebraically closed, then up to isomorphism there is a unique moduli space of $r$ points on the sphere for a nondegenerate quadratic form in $m$ variables; we shall denote it by $A(r,m)=S(m)\git\SO(m)$ as in Section \ref{sect:1}. Similarly, we shall write $A'(r,m)$ for the corresponding moduli space of unoriented points on $S(m)$.

\begin{proposition}
\label{braidsquad}
The braid group $B_r$ acts on $A(r,q)$ and $A'(r,q)$ by the moves
$$\sigma_i(u_1,\dots,u_r)=(u_1,\dots,u_{i-1},s_{u_i}(u_{i+1}),u_i,u_{i+2},\dots,u_r)$$
where $\sigma_1,\dots,\sigma_{r-1}$ are the standard generators of $B_r$.
\end{proposition}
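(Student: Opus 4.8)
The plan is to deduce this from the abstract construction in Proposition \ref{bries} together with the quandle structure established in Proposition \ref{spherequand}, the only additional point being that the resulting braid action is by algebraic automorphisms of the GIT quotients rather than merely bijections of the underlying sets.

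First I would recall that by Proposition \ref{spherequand} the pair $(S(q),\triangleleft)$ with $u\triangleleft v=s_u(v)$ is a quandle, and that both $\SO(q)$ and $\Ort(q)$ act on it by quandle automorphisms. Applying Proposition \ref{bries} with $X=S(q)$ and $G=\SO(q)$ (respectively $G=\Ort(q)$) then yields a right action of $B_r$ on the set-theoretic quotient $S(q)^r/\SO(q)$ (respectively $S(q)^r/\Ort(q)$), in which the standard generator $\sigma_i$ acts by the displayed move. In fact, as in the proof of Proposition \ref{bries}, the moves $\sigma_i$ already satisfy the braid relations at the level of $S(q)^r$ itself, using condition (3) of the quandle axioms, and $\sigma_i\sigma_j=\sigma_j\sigma_i$ for $|i-j|\geq 2$ is visible from the formula.

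Next I would upgrade this to the algebraic setting. The formula $s_{u_i}(u_{i+1})=2\langle u_i,u_{i+1}\rangle u_i-u_{i+1}$ is polynomial in the coordinates of $u_i$ and $u_{i+1}$, so each $\sigma_i$ is a morphism of varieties $S(q)^r\to S(q)^r$; since $s_u\circ s_u=\id_V$, its inverse is given by the analogous polynomial move $(\dots,a,b,\dots)\mapsto(\dots,b,s_b(a),\dots)$, so $\sigma_i$ is an automorphism of the variety $S(q)^r$. The quandle automorphism property in Proposition \ref{spherequand} says precisely that each $\sigma_i$ commutes with the diagonal actions of $\SO(q)$ and of $\Ort(q)$. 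Hence, by the universal property of the GIT (categorical) quotient, $\sigma_i$ descends to a morphism $\bar\sigma_i$ of $A(r,q)=S(q)^r\git\SO(q)$, and likewise of $A'(r,q)=S(q)^r\git\Ort(q)$.

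Finally, since $Y\mapsto Y\git\SO(q)$ is functorial for $\SO(q)$-equivariant morphisms, the assignment $\sigma_i\mapsto\bar\sigma_i$ preserves composition and identities; in particular each $\bar\sigma_i$ is an automorphism (with inverse $\overline{\sigma_i^{-1}}$), and the braid relations among the $\sigma_i$ acting on $S(q)^r$ pass to the $\bar\sigma_i$, giving the desired $B_r$-action on $A(r,q)$, and similarly on $A'(r,q)$. The only point requiring care — and the only place this proof does anything beyond quoting Proposition \ref{bries} — is precisely this passage from the set-theoretic quotient of Proposition \ref{bries} to the GIT quotient, namely checking that the braid moves descend to algebraic automorphisms of the GIT quotient and not merely set bijections; once polynomiality and equivariance are in hand, this is immediate from functoriality of the categorical quotient.
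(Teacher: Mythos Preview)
Your proof is correct and follows the same approach as the paper, which simply writes ``This follows by combining Propositions \ref{bries} and \ref{spherequand}.'' You additionally make explicit the passage from the set-theoretic quotient in Proposition \ref{bries} to the GIT quotient, verifying that the braid moves are polynomial, equivariant, and hence descend to algebraic automorphisms via functoriality of the categorical quotient; the paper leaves this step implicit.
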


\begin{proof}
This follows by combining Propositions \ref{bries} and \ref{spherequand}.
\end{proof}

Our next step is to define the notion of Coxeter invariant for points in $A(r,q)$.

\begin{proposition}
\label{sphere-coxeter}
The morphism $c\colon S(q)^r\to\Pin(q)$ given by
$$(u_1,\dots,u_r)\mapsto u_1\otimes\cdots\otimes u_r$$
is $\Ort(q)$-equivariant and is $B_r$-invariant.
\end{proposition}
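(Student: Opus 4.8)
The plan is to verify the two equivariance/invariance claims separately, each by a direct computation using the results already established.

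\medskip

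\textbf{Equivariance.} For the first claim, I would observe that the $\Ort(q)$-action on $\Pin(q)$ is by group automorphisms (as noted after the definition of $\Pin(q)$, via functoriality of the Clifford construction), and that this action restricts on $S(q)\subset\Pin^1(q)$ to the usual linear action of $\Ort(q)$ on $V$. Hence for $g\in\Ort(q)$ and $(u_1,\dots,u_r)\in S(q)^r$, applying $g$ to the product gives
$$g\cdot(u_1\otimes\cdots\otimes u_r)=(g\cdot u_1)\otimes\cdots\otimes(g\cdot u_r),$$
which is exactly $c(g\cdot(u_1,\dots,u_r))$. This is immediate and is really just a restatement of functoriality.

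\medskip

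\textbf{Invariance.} For the second claim, by the presentation of $B_r$ it suffices to check that $c$ is unchanged under each standard generator $\sigma_i$, i.e.\ that
$$u_i\otimes u_{i+1}=s_{u_i}(u_{i+1})\otimes u_i$$
inside $\Cl(q)$, since all other tensor factors in the product defining $c$ are untouched by $\sigma_i$. By Proposition \ref{basic}, $s_{u_i}(u_{i+1})=u_i\otimes u_{i+1}\otimes u_i^{-1}$, so the right-hand side equals $u_i\otimes u_{i+1}\otimes u_i^{-1}\otimes u_i=u_i\otimes u_{i+1}$, as needed. One should note this computation takes place at the level of $S(q)^r$ before passing to the GIT quotient, which is fine since $B_r$ acts compatibly at that level by Proposition \ref{bries}; strictly, $c$ is only claimed invariant as a map on $S(q)^r$, with $B_r$ acting through the moves of Proposition \ref{braidsquad}.

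\medskip

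\textbf{Main obstacle.} There is essentially no obstacle here: both parts reduce to one-line identities in the Clifford algebra, the substantive input (Proposition \ref{basic}) having already been proved. The only point requiring a modicum of care is bookkeeping — confirming that $\sigma_i$ genuinely fixes every tensor factor of the product except the adjacent pair $(u_i,u_{i+1})$, which it replaces by $(s_{u_i}(u_{i+1}),u_i)$, and that the identity $u_i\otimes u_{i+1}=s_{u_i}(u_{i+1})\otimes u_i$ then propagates through the associative product to give $c\circ\sigma_i=c$. Since the braid relations hold in $B_r$ and $c$ is a well-defined function, checking invariance on generators suffices.
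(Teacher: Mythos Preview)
Your proposal is correct and follows essentially the same approach as the paper: both use functoriality of the Clifford construction (i.e.\ that $\Ort(q)$ acts by algebra automorphisms) for equivariance, and both reduce $B_r$-invariance to checking on standard generators via the identity $s_{u_i}(u_{i+1})=u_i\otimes u_{i+1}\otimes u_i^{-1}$ from Proposition~\ref{basic}. Your write-up is slightly more explicit about why checking on generators suffices, but the argument is identical in substance.
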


\begin{proof} As $\Ort(q)$ acts on $\Cl(q)$ by algebra automorphisms, $c$ is $\Ort(q)$-equivariant. For $B_r$-invariance, fix $u\in S(q)^r$. If $\sigma=\sigma_i$ is one of the standard generators of $B_r$, then
\begin{align*}
    c(\sigma_i^{*}u)&=u_1\otimes\cdots\otimes u_{i-1}\otimes s_{u_i}(u_{i+1})\otimes u_i\otimes u_{i+2}\otimes \cdots \otimes u_{r}\\
    &=u_1\otimes\cdots\otimes u_{i-1}\otimes (u_i\otimes u_{i+1}\otimes u_i^{-1})\otimes u_i\otimes u_{i+2}\otimes \cdots \otimes u_{r}\\
    &=u_1\otimes\cdots\otimes u_r=c(u)
\end{align*}
by Proposition \ref{basic}. This completes the proof.
\end{proof}

\begin{definition}
\label{sphere:coxeter}
The \emph{Coxeter invariant} of $A(r,q)$ is the $B_r$-invariant morphism
$$c\colon A(r,q)\to\Pin(q)\git\SO(q)$$
given by $[u_1,\dots,u_r]\mapsto [u_1\otimes\cdots\otimes u_r]$. Given each class $P$ in $\Pin(q)\git\SO(q)$, we shall denote the corresponding fiber of $c$ by $A_P(r,m)$. Similarly, the \emph{Coxeter invariant} of $A'(r,q)$ is the $B_r$-invariant morphism $c\colon A'(r,q)\to\Pin(q)\git\Ort(q)$ given by $[u_1,\dots,u_r]\mapsto[u_1\otimes\cdots\otimes u_r]$, and we denote its fibers by $A_P'(r,m)$.
\end{definition}

\begin{remark}
Under the projection $\pi\colon\Pin(q)\git\SO(q)\to\Ort(m)\git\SO(q)$ induced by the morphism $\pi\colon\Pin(q)\to\Ort(q)$ from Section \ref{sect:2.clifford}, we see that the image of $c(u)$ for $u=[u_1,\dots,u_r]\in A(r,m)$ the class of
$$(-1)^rs_{u_1}\circ\dots\circ s_{u_r}.$$
\end{remark}

\section{Core quandles} \label{sect:3}
\subsection{Core quandles} Let us first recall the notion of a core quandle \cite{joyce} associated to a group.

\begin{proposition}
\label{trquand}
Let $G$ be a group. The operation
$$u\tl v=uv^{-1}u$$
for $u,v\in G$ equips the set underlying $G$ with the structure of a quandle. We shall refer to $(G,\tl)$ as the \emph{core quandle} associated to $G$.
\end{proposition}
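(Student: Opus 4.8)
The claim is that $(G,\triangleleft)$ with $u\triangleleft v = uv^{-1}u$ is a quandle, so I need to verify the three axioms of Definition \ref{quandle}. The plan is simply to check each in turn, all by elementary group-theoretic manipulation.

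First, idempotency: for any $u\in G$, we have $u\triangleleft u = uu^{-1}u = u$, which is axiom (1). Second, the unique-division axiom: given $u,z\in G$, I need a unique $y$ with $u\triangleleft y = z$, i.e.\ $uy^{-1}u = z$; solving gives $y^{-1} = u^{-1}zu^{-1}$, hence $y = uz^{-1}u$, and this is manifestly the unique solution since group elements have unique inverses. (Note this shows the left translation $L_u$ is an involution, which is the ``core'' feature.) Third, self-distributivity: I must show $u\triangleleft(v\triangleleft w) = (u\triangleleft v)\triangleleft(u\triangleleft w)$. The left side is $u(v\triangleleft w)^{-1}u = u(vw^{-1}v)^{-1}u = uv^{-1}wv^{-1}u$. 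The right side is $(uv^{-1}u)(uw^{-1}u)^{-1}(uv^{-1}u) = (uv^{-1}u)(u^{-1}wu^{-1})(uv^{-1}u) = uv^{-1}wv^{-1}u$. The two agree, establishing axiom (3).

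There is no real obstacle here — the statement is a classical fact (due to Joyce, as cited) and the verification is purely formal. The only thing to be mildly careful about is writing the inverses in the correct order, e.g.\ $(vw^{-1}v)^{-1} = v^{-1}wv^{-1}$ and $(uw^{-1}u)^{-1} = u^{-1}wu^{-1}$, but these are immediate. I would present the proof as three short displayed computations, one per axiom, with essentially no commentary beyond identifying which axiom each line verifies.
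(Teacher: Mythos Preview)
Your proof is correct and essentially identical to the paper's. The only cosmetic difference is in axiom (2): the paper verifies it by noting directly that $u\tl(u\tl v)=v$, so left translation by $u$ is an involution, whereas you solve the equation $uy^{-1}u=z$ explicitly and then remark that this exhibits $L_u$ as an involution; these are the same observation.
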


\begin{proof}
Clearly, $u\tl u=u$ for all $u\in G$. Since $u\tl (u\tl v)=v$ for every $u,v\in G$, it follows that the left translation $u\tl -$ is an involutive bijection on $G$. Moreover, for $u,v,w\in G$ we have
\begin{align*}
    u\tl (v\tl w)&=u(vw^{-1}v)^{-1}u=(uv^{-1}u)(u^{-1}wu^{-1})(uv^{-1}u)=(u\tl v)\tl (u\tl w).
\end{align*}
This proves that $(G,\tl)$ is a quandle, as desired.
\end{proof}

Our next step is to construct out of $G$ a group acting on its core quandle.

\begin{definition}
Given a group $G$, let $G^{[2]}$ denote the semidirect product
$$G^{[2]}=(G\times G)\rtimes C_2$$ where the cyclic group $C_2=\{1,\iota\}$ of order two acts on $G\times G$ by permutation.
\end{definition}

\begin{proposition}
\label{aut}
Let $G$ be a group, and with core quandle $(G,\tl)$.
\begin{enumerate}
    \item The group $C_2=\{1,\iota\}$ acts on $(G,\tl)$ by $\iota\cdot a=a^{-1}$ for every $a\in G$.
    \item The group $G\times G$ acts on $(G,\tl)$ by
    $(g,h)\cdot a=gah^{-1}$
    for $a,g,h\in G$.
\end{enumerate}
These two actions define an action of $G^{[2]}=(G\times G)\rtimes C_2$ on $(G,\tl)$.
\end{proposition}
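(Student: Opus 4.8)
The plan is to verify the three assertions by direct computation, the only subtlety being the compatibility needed to glue the two actions into a single action of the semidirect product $G^{[2]}$.

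First I would dispose of part (1). Since $a\mapsto a^{-1}$ is an involution of the underlying set, it suffices to check that it is a quandle endomorphism of $(G,\tl)$. One computes $\iota\cdot(u\tl v)=(uv^{-1}u)^{-1}=u^{-1}vu^{-1}$, while $(\iota\cdot u)\tl(\iota\cdot v)=u^{-1}(v^{-1})^{-1}u^{-1}=u^{-1}vu^{-1}$; these agree, so $\iota\cdot-\in\Aut(G,\tl)$ and $\iota\mapsto(\iota\cdot-)$ defines an action of $C_2$. Next I would check part (2): for fixed $(g,h)\in G\times G$ the map $a\mapsto gah^{-1}$ is a bijection with inverse $a\mapsto g^{-1}ah$, and $((g,h)\cdot u)\tl((g,h)\cdot v)=(guh^{-1})(gvh^{-1})^{-1}(guh^{-1})=guv^{-1}uh^{-1}=(g,h)\cdot(u\tl v)$, so each $(g,h)$ acts by a quandle automorphism. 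That $(g_1,h_1)\cdot\big((g_2,h_2)\cdot a\big)=(g_1g_2)a(h_1h_2)^{-1}=\big((g_1,h_1)(g_2,h_2)\big)\cdot a$ is immediate, so we obtain an action of $G\times G$ on $(G,\tl)$ by quandle automorphisms.

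Finally, to assemble these into an action of $G^{[2]}=(G\times G)\rtimes C_2$, I would invoke the universal property of the semidirect product: it is enough to verify that conjugation by the operator attached to $\iota$ sends the operator attached to $(g,h)$ to the one attached to $\iota\cdot(g,h)=(h,g)$. Concretely, $\iota\cdot\big((g,h)\cdot(\iota\cdot a)\big)=(ga^{-1}h^{-1})^{-1}=hag^{-1}=(h,g)\cdot a$ for all $a\in G$, which is exactly the required relation. Hence the two actions combine to a well-defined homomorphism $G^{[2]}\to\Aut(G,\tl)$, completing the proof.

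I do not expect a real obstacle here; the only place demanding care is orienting the semidirect-product compatibility relation correctly so that it matches the chosen (permutation) action of $C_2$ on $G\times G$, and the displayed identity above confirms it.
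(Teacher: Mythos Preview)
Your proposal is correct and follows essentially the same route as the paper: both argue by direct computation that inversion and two-sided translation preserve $\tl$, and then verify the single compatibility relation $\iota\cdot((g,h)\cdot(\iota\cdot a))=(h,g)\cdot a$ needed to extend to the semidirect product. The only difference is cosmetic: you explicitly record bijectivity and the group-action axiom for $G\times G$, which the paper leaves implicit.
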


\begin{proof}
For every $a,b\in G$, we observe that
$$(a\tl b)^{-1}=(ab^{-1}a)^{-1}=a^{-1}ba^{-1}=a^{-1}\tl b^{-1}.$$
This proves the first part of the proposition. Next, for every $g,h,a,b\in G$ we have
$$g(a\tl b)h^{-1}=gab^{-1}ah^{-1}=(gah^{-1})(hb^{-1}g^{-1})(gah^{-1})=(gah^{-1})\tl(gbh^{-1}).$$ This proves the second part. Now let $\phi\colon C_2\to\Aut(G,\tl)$ and $\psi\colon G\times G\to \Aut(G,\tl)$ be the associated morphisms of groups. To prove the last assertion, it suffices to show that, for every $(g,h)\in G\times G$, we have
$$\psi(\iota (g,h))=\phi(\iota)\psi(g,h)\phi(\iota^{-1}).$$
For each $a\in G$, we have
\begin{align*}
    &\psi(\iota (g,h))(a)=\psi(h,g)(a)=hag^{-1}\quad\text{while}\\
    &\phi(\iota)\psi(g,h)\phi(\iota^{-1})(a)=\phi(\iota)\psi(g,h)(a^{-1})=\phi(\iota)(ga^{-1}h^{-1})=hag^{-1},
\end{align*}
showing that the two sides agree, as desired.
\end{proof}

To fix ideas, let now $G$ be a reductive algebraic group over a field of characteristic zero. (The reader will notice that our constructions and arguments apply almost verbatim to arbitrary groups.) We define the family $B(r,G)$ of varieties equipped with braid group action and a notion of Coxeter invariants. Given an integer $r\geq1$, let us equip $G^r$ with the action of $G\times G$ given by
 $$(x,y)\cdot(a_1,\dots,a_r)=(xa_1y^{-1},\dots,xa_ry^{-1})$$
for $(x,y)\in G\times G$ and $(a_1,\dots,a_r)\in G^r$.

\begin{definition}
Let $G$ be a group, and let $r\geq1$ be an integer. Let $$B(r,G)=G^r\git(G\times G).$$
\end{definition}

\begin{corollary}
\label{braidond}
Let $G$ be a group, and let $r\geq1$ be an integer. The braid group $B_r$ acts on $B(r,G)$ by the moves
$$\sigma_i^*[a_1,\dots,a_r]=[a_1,\dots,a_{i-1},a_ia_{i+1}^{-1}a_i,a_i,a_{i+2},\dots,a_r]$$
where $\sigma_1,\dots,\sigma_{r-1}$ are the standard generators of $B_r$.
\end{corollary}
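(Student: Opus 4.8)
The statement to prove is Corollary~\ref{braidond}: the braid group $B_r$ acts on $B(r,G) = G^r \git (G\times G)$ by the moves
$$\sigma_i^*[a_1,\dots,a_r]=[a_1,\dots,a_{i-1},a_ia_{i+1}^{-1}a_i,a_i,a_{i+2},\dots,a_r].$$

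The plan is to deduce this directly from Proposition~\ref{bries} applied to the core quandle. First I would invoke Proposition~\ref{trquand}, which says the underlying set of $G$ carries the core quandle structure with operation $u \tl v = uv^{-1}u$. Next I would invoke Proposition~\ref{aut}, which exhibits $G^{[2]} = (G\times G)\rtimes C_2$ acting on $(G,\tl)$ by quandle automorphisms; in particular the subgroup $G\times G$ acts by $(g,h)\cdot a = gah^{-1}$, and this is an action by quandle automorphisms (part (2) of Proposition~\ref{aut}). Then the diagonal action of $G\times G$ on $G^r$ used in the definition of $B(r,G)$ is precisely the $r$-fold diagonal of this quandle-automorphism action, so Proposition~\ref{bries} applies with $X = (G,\tl)$ and the group $G\times G$, yielding a right action of $B_r$ on $G^r \git (G\times G) = X^r/(G\times G)$ — modulo the remark that for reductive $G$ the GIT quotient is the relevant geometric object, but the combinatorial action of $B_r$ is inherited set-theoretically from the action on $G^r$ exactly as in the proof of Proposition~\ref{bries}. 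Finally, I would unwind the formula from Proposition~\ref{bries}: the generator $\sigma_i$ sends $[a_1,\dots,a_r]$ to $[a_1,\dots,a_{i-1}, a_i \tl a_{i+1}, a_i, a_{i+2},\dots,a_r]$, and substituting $a_i \tl a_{i+1} = a_i a_{i+1}^{-1} a_i$ gives exactly the claimed move.

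Concretely, the proof is essentially one line: ``This follows by combining Propositions~\ref{bries}, \ref{trquand}, and \ref{aut}.'' The only substantive point worth a sentence is that $B(r,G)$ is defined as a GIT quotient rather than a bare quotient set, so one should note that the braid moves $\sigma_i^*$ on $G^r$ are morphisms of varieties (they are built from multiplication and inversion in $G$) commuting with the $G\times G$-action, hence descend to the categorical/GIT quotient; this is the same descent argument that concludes the proof of Proposition~\ref{bries}.

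I do not expect any real obstacle here. The one place requiring a modicum of care is the bookkeeping of \emph{which} group acts: Proposition~\ref{bries} is stated for a group $G$ acting on a quandle $X$, and here the ``quandle'' is the set underlying the algebraic group $G$ while the ``acting group'' is $G\times G$ (a different object), so one must be careful not to conflate the two roles of $G$. Once that is kept straight, the verification that $(g,h)\cdot(-)$ really is a quandle automorphism is exactly Proposition~\ref{aut}(2), which is already established, and the braid formula is a direct substitution. I would therefore write the proof as a short deduction citing the three prior propositions, with a parenthetical remark on the descent to the GIT quotient.
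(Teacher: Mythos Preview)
Your proposal is correct and matches the paper's proof exactly: the paper's entire proof is the single sentence ``This follows by combining Propositions~\ref{bries}, \ref{trquand}, and \ref{aut}.'' Your additional remarks on the GIT-versus-set quotient and on keeping the two roles of $G$ straight are sound clarifications, though the paper does not spell them out.
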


\begin{proof}
This follows by combining Propositions \ref{bries}, \ref{trquand}, and \ref{aut}.
\end{proof}

\begin{proposition}
\label{group-coxeter}
The map $c\colon G^r\to G^{[2]}$ given by
$$c(a_1,\dots,a_r)=(a_1,a_1^{-1})\iota\cdots (a_r,a_r^{-1})\iota$$
is $(G\times G)$-equivariant and $B_r$-invariant. Here, $G^{[2]}=(G\times G)\rtimes C_2$ is equipped with the subgroup conjugation action of $G\times G$.
\end{proposition}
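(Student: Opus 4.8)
The plan is to verify both equivariance and $B_r$-invariance by direct computation in the group $G^{[2]}=(G\times G)\rtimes C_2$, using the multiplication rule $\iota(g,h)=(h,g)\iota$ and $\iota^2=1$. The key preliminary observation is a normal-form identity for the building blocks: for $a\in G$, the element $(a,a^{-1})\iota\in G^{[2]}$ satisfies $\big((a,a^{-1})\iota\big)^2=(a,a^{-1})(a^{-1},a)=(1,1)$, so each factor in $c(a_1,\dots,a_r)$ is an involution, and more usefully $(a,a^{-1})\iota\cdot(g,h)=(ah^{-1}\cdot h, a^{-1}g^{-1}\cdot g)\iota$; I would record the clean version $(a,a^{-1})\iota\cdot(g,h)\cdot\iota = (ah^{-1},a^{-1}g^{-1})$ in $G\times G$, since products of an even number of these factors land in $G\times G$.

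For $(G\times G)$-equivariance, I would compute $c\big((x,y)\cdot(a_1,\dots,a_r)\big)=\prod_i (xa_iy^{-1}, ya_i^{-1}x^{-1})\iota$ and show this equals $(x,y)\,c(a_1,\dots,a_r)\,(x,y)^{-1}$ under the conjugation action of $G\times G$ on $G^{[2]}$. The mechanism is telescoping: writing $(xa_iy^{-1},ya_i^{-1}x^{-1})\iota=(x,y)(a_i,a_i^{-1})(y^{-1},x^{-1})\iota=(x,y)(a_i,a_i^{-1})\iota(x^{-1},y^{-1})$, adjacent factors $(x^{-1},y^{-1})(x,y)$ cancel, leaving $(x,y)\big[\prod_i(a_i,a_i^{-1})\iota\big](x^{-1},y^{-1})=(x,y)\,c(a)\,(x,y)^{-1}$, as desired. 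One must be slightly careful about whether the number of factors is even or odd—when $r$ is odd the trailing factor is $(x^{-1},y^{-1})\iota$ rather than $(x^{-1},y^{-1})$—but since $(x,y)^{-1}$ in $G^{[2]}$ is $(x^{-1},y^{-1})$ and conjugation by an element of $G\times G$ commutes past $\iota$ appropriately, the identity holds uniformly; I would state it in the form $c((x,y)\cdot a)=(x,y)\,c(a)\,(x,y)^{-1}$ and note this is exactly equivariance for the subgroup conjugation action.

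For $B_r$-invariance, by the argument pattern of Proposition \ref{bries} it suffices to treat a single standard generator $\sigma_i$, and since only the factors in positions $i$ and $i+1$ are affected, it reduces to checking the two-factor identity
\begin{equation*}
(a_ia_{i+1}^{-1}a_i,\ (a_ia_{i+1}^{-1}a_i)^{-1})\iota\cdot(a_i,a_i^{-1})\iota \;=\; (a_i,a_i^{-1})\iota\cdot(a_{i+1},a_{i+1}^{-1})\iota
\end{equation*}
in $G^{[2]}$. Expanding the left side using $\iota(g,h)=(h,g)\iota$ gives $(a_ia_{i+1}^{-1}a_i,\,a_i^{-1}a_{i+1}a_i^{-1})(a_i^{-1},a_i)=(a_ia_{i+1}^{-1},\,a_i^{-1}a_{i+1})$, while the right side expands to $(a_i,a_i^{-1})(a_{i+1}^{-1},a_{i+1})=(a_ia_{i+1}^{-1},\,a_i^{-1}a_{i+1})$; these agree. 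This is the same calculation underlying Proposition \ref{basic} and Proposition \ref{sphere-coxeter}, transported to the core-quandle setting via Proposition \ref{trquand}. The main obstacle, such as it is, is purely bookkeeping: keeping the semidirect-product multiplication straight and handling the parity of $r$ in the equivariance step; there is no conceptual difficulty, and the proof should be a few lines once the normal-form identity for $(a,a^{-1})\iota$ is in place.
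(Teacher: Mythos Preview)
Your proposal is correct and follows essentially the same approach as the paper: direct computation in $G^{[2]}$ using the relation $\iota(g,h)=(h,g)\iota$, reducing equivariance to the single-factor identity $(xzy^{-1},(xzy^{-1})^{-1})\iota=(x,y)(z,z^{-1})\iota(x,y)^{-1}$ and reducing $B_r$-invariance to the two-factor identity you display. Your parity worry is unnecessary---once you have written each factor as $(x,y)(a_i,a_i^{-1})\iota(x,y)^{-1}$, the telescoping gives $(x,y)\,c(a)\,(x,y)^{-1}$ uniformly in $r$, exactly as the paper does in one line.
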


\begin{proof}
For any $(x,y)\in G^2$ and $z\in G$, we have
$$(xzy^{-1},(xzy^{-1})^{-1})\iota=(x,y)(z,z^{-1})\iota(x,y)^{-1}.$$
It follows that $c(xay^{-1})=(x,y)c(a)(x,y)^{-1}$ for any $(x,y)\in G^2$ and $a\in G^r$, thus proving that $c$ is $(G\times G)$-equivariant. To prove $B_r$-invariance, it suffices to note that for any $a,b\in G$ we have
\begin{align*}
    (ab^{-1}a,(ab^{-1}a)^{-1})\iota(a,a^{-1})=(ab^{-1}a,a^{-1}ba^{-1})(a^{-1},a)\iota=(a,a^{-1})\iota(b,b^{-1}),
\end{align*}
so $c(\sigma_i^*u)=c(u)$ for $u\in G^r$ where $\sigma_i$ is any of the standard generators of $B_r$.
\end{proof}

\begin{definition}
\label{:coxeter}
The \emph{Coxeter invariant} on $B(r,G)$ is the $B_r$-invariant morphism
$$c\colon B(r,G)\to G^{[2]}\git(G\times G)$$
given by $[a_1,\dots,a_r]\mapsto[(a_1,a_1^{-1})\iota\cdots (a_r,a_r^{-1})\iota]$.
\end{definition}

It will be useful to record the following.

\begin{definition}
Let $W(G)=G\git G$ be the quotient of $G$ by itself under conjugation.
\end{definition}

\begin{proposition}
\label{conj}
Under the conjugation action of $G\times G$ on $G^{[2]}$, we have:
\begin{itemize}
    \item $(G\times G)\git(G\times G)\simeq W(G)^2$, and
    \item $(G\times G)\iota\git(G\times G)\simeq W(G)$.
\end{itemize}
Thus, we have an isomorphism $G^{[2]}\git(G\times G)\simeq W(G)^2\sqcup W(G)$.
\end{proposition}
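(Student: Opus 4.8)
The plan is to analyze the orbit structure of the conjugation action of $G \times G$ on the two pieces of $G^{[2]} = (G \times G) \sqcup (G\times G)\iota$ separately, since conjugation by $G \times G$ preserves the $C_2$-grading (the image of any element in $C_2$ is fixed under conjugation).

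First I would treat the degree-zero part. The conjugation action of $(x,y) \in G\times G$ on an element $(a,b) \in G \times G$ sends it to $(x,y)(a,b)(x,y)^{-1} = (xax^{-1}, yby^{-1})$, so the action is simply the product of the two conjugation actions of $G$ on itself. Therefore $(G\times G)\git(G\times G) \simeq (G\git G) \times (G\git G) = W(G)^2$, where I am using that the GIT quotient of a product by a product of groups is the product of the GIT quotients (which holds since the coordinate ring of a product is the tensor product of coordinate rings, and the invariants of a tensor product of rational representations is the tensor product of invariants for reductive groups).

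Next I would treat the degree-one part $(G\times G)\iota$. An element here has the form $(a,b)\iota$, and conjugation by $(x,y)$ gives $(x,y)(a,b)\iota(x,y)^{-1} = (x,y)(a,b)(y^{-1},x^{-1})\iota = (xay^{-1}, ybx^{-1})\iota$. The key observation is that the map $(G\times G)\iota \to G$ sending $(a,b)\iota \mapsto ab$ is compatible with these actions: it sends $(xay^{-1}, ybx^{-1})\iota$ to $xay^{-1}ybx^{-1} = x(ab)x^{-1}$, so the induced $G\times G$-action on the target $G$ factors through the first projection $G\times G \to G$ acting by conjugation. Conversely, the map $G \to (G\times G)\iota$, $g \mapsto (g,1)\iota$, is a section, and together with the free action of the second factor $1\times G$ (which acts by $(a,b)\iota \mapsto (ay^{-1}, yb)\iota$), one sees the variety $(G\times G)\iota$ is $G\times G$-equivariantly isomorphic to $G \times G$ where $G\times G$ acts on the first factor by conjugation and on the second by right-and-left translation (hence freely). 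Quotienting, $(G\times G)\iota\git(G\times G) \simeq (G\git G) = W(G)$. A cleaner way to phrase the same point: the map $(a,b)\iota \mapsto ab$ identifies $(G\times G)\iota\git(G\times G)$ with the quotient of $G$ by the residual action, and the stabilizer computation shows this residual action is just conjugation.

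Finally, since $\Spec$ of a product ring is a disjoint union, and GIT quotients commute with taking disjoint unions (the invariant functions on a disjoint union are pairs of invariant functions on each piece), we conclude $G^{[2]}\git(G\times G) \simeq \bigl((G\times G)\git(G\times G)\bigr) \sqcup \bigl((G\times G)\iota\git(G\times G)\bigr) \simeq W(G)^2 \sqcup W(G)$.

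I expect the main obstacle, such as it is, to be the bookkeeping in the degree-one case: verifying carefully that the action of $1 \times G$ on $(G\times G)\iota$ is free and that the quotient by it recovers $G$ with a residual conjugation action, so that iterating the quotient is legitimate. For reductive $G$ over a field of characteristic zero this is straightforward since all the relevant quotients are well-behaved GIT quotients, but one should be slightly careful that $(G \times G)\iota$ is not a subgroup, only a subvariety stable under conjugation, so "quotient" here means the GIT quotient of a variety by a group action, and the two-step quotient argument needs the action of $1 \times G$ to descend correctly — which it does because the two factors of $G \times G$ act on $(G\times G)\iota$ by commuting transformations.
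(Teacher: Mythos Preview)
Your proposal is correct and takes essentially the same approach as the paper: both identify the key map $(a,b)\iota \mapsto ab$ as giving the isomorphism $(G\times G)\iota\git(G\times G)\simeq W(G)$, and both use the section $g\mapsto (g,1)\iota$. The only cosmetic difference is that the paper verifies the inverse directly via the identity $(de,1)\iota=(1,e^{-1})(d,e)\iota(1,e)$, whereas you package the same computation as a two-step quotient (first by the free $1\times G$-action, then by the residual conjugation); these are the same argument.
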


\begin{proof}
The first claim is obvious. To prove the second claim, let $(d,e)\iota\in (G\times G)\iota$ be given. For any $(x,y)\in G\times G$, we have
$$(x,y)(d,e)\iota (x,y)^{-1}=(x,y)(d,e)(y,x)^{-1}\iota=(xdy^{-1},yex^{-1}).$$
Thus, the morphism $f\colon(G\times G)\iota/(G\times G)\to W(G)$ given by $(d,e)\mapsto de$ is well defined. Consider now the morphism $g\colon W(G)\to (G\times G)\iota/(G\times G)$ given by $c\mapsto (c,1)\iota$. This is well-defined, since
$$(xcx^{-1},1)\iota=(x,x)(c,1)\iota(x,x)^{-1}$$
for every $x\in G$. Now, clearly $f\circ g=\id_{W(G)}$. To see that $g\circ f=\id_{(G\times G)\iota/(G\times G)}$, it suffices to note that $(de,1)\iota=(1,e^{-1})(d,e)\iota (1,e)$ for every $(d,e)\in G^2$. This proves the second claim.
\end{proof}

\subsection{Exceptional isomorphisms}\label{sect:3.2}
In Section 2, we introduced the moduli spaces $A(r,m)$ of points on spheres for $r,m\geq1$. We will prove the following.

\begin{proposition}
\label{sporadic}
There exist $B_r$-equivariant isomorphisms
\begin{enumerate}
    \item $A'(r,1)\simeq B(r,\mu_2)$,
    \item $A(r,2)\simeq B(r,\G_m)$,
    \item $A(r,4)\simeq B(r,\SL_2)$
\end{enumerate}
suitably preserving the Coxeter invariants.
\end{proposition}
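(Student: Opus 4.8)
The plan is to exploit the three isomorphisms of $\Z/2\Z$-graded algebras $j\colon\Cl(q_m)\xrightarrow{\sim}M_m$ constructed in Example \ref{cliffordex}, for $m=1,2,4$, together with the identification of $S(q_m)$ with a group $G_m$ ($\mu_2,\G_m,\SL_2$ respectively) sitting inside the odd part. The key observation is that under $j$, the odd component $M_m^1=(M_m^0)^2\iota$ so that the odd part of the Clifford algebra is an $M_m^0$-bimodule free of rank one on $\iota$, and multiplication by $\iota$ on the right interchanges the two factors of $M_m^0=(G_m\text{-algebra})^2$. Thus an element of $S(q_m)\subset\Cl^1(q_m)$ corresponds to $(u,u^{-1})\iota$ for $u\in G_m$ (using $\bar x=x^{-1}$ in the $\SL_2$ case, i.e. the adjugate equals the inverse on $\SL_2$); I would first record this dictionary carefully, checking that the embedding $S(q_m)\hookrightarrow\Cl^1(q_m)$ really lands on the locus $\{(u,u^{-1})\iota:u\in G_m\}$.

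Next I would compute what $\SO(q_m)$ (resp.\ $\Ort(q_m)$ when $m=1$) becomes under this dictionary. The point is that $\pi\colon\Pin(q_m)\to\Ort(q_m)$ is surjective with kernel the central $\mu_2$, and $\Spin(q_m)=\Pin^0(q_m)$ maps onto $\SO(q_m)$; under $j$ the even subalgebra $M_m^0=(G_m$-algebra$)^2$ has unit group whose ``spin part'' one identifies with $G_m\times G_m$ (for $m=4$, $\Spin(q_4)\cong\SL_2\times\SL_2$ is the classical exceptional isomorphism; for $m=2$, $\Spin(q_2)\cong\G_m$; for $m=1$, $\Spin(q_1)=\mu_2$ is a point-plus-sign issue handled by passing to $\Ort$). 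The conjugation action $\pi(g)(v)=\alpha(g)\otimes v\otimes g^{-1}$ of Section \ref{sect:2.clifford}, transported through $j$, acts on $(u,u^{-1})\iota$ by $(x,y)\cdot(u,u^{-1})\iota=(xuy^{-1},yu^{-1}x^{-1})\iota$, which is exactly the action of $G\times G$ on $G$ underlying $B(r,G)$ from Section \ref{sect:3}. So the $\SO(q_m)$-action on $S(q_m)^r$ matches the $(G_m\times G_m)$-action on $G_m^r$, giving $A(r,q_m)\simeq B(r,G_m)$ as GIT quotients (and $A'(r,1)\simeq B(r,\mu_2)$ after the extra $\mu_2$-quotient, since $\Ort(q_1)/\SO(q_1)$ accounts for inversion $\iota\cdot a=a^{-1}$ on $\mu_2$, which is trivial there — the prime is needed because $\SO(q_1)$ is trivial while one wants the full $\Ort$).

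Then I would verify that this isomorphism of varieties intertwines the two braid group actions: the $\sigma_i$-move on $A(r,q_m)$ replaces $u_i$ by $s_{u_i}(u_{i+1})=u_i\otimes u_{i+1}\otimes u_i^{-1}$ (Proposition \ref{basic}), which under the dictionary $u\mapsto(u,u^{-1})\iota$ and the computation $(u,u^{-1})\iota\cdot(v,v^{-1})\iota\cdot(u,u^{-1})\iota$ (using $\iota(a,b)=(b,a)\iota$) becomes $(uv^{-1}u,(uv^{-1}u)^{-1})\iota$ — precisely the core-quandle operation $u\tl v=uv^{-1}u$ appearing in Corollary \ref{braidond}. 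Hence the moves agree on the nose. Finally, for the Coxeter invariants: under $j$ the product $u_1\otimes\cdots\otimes u_r$ in $\Pin(q_m)$ goes to $(u_1,u_1^{-1})\iota\cdots(u_r,u_r^{-1})\iota$ in $G_m^{[2]}$, which is literally the formula defining $c$ on $B(r,G_m)$ in Proposition \ref{group-coxeter}; compatibility with the GIT quotients by $\SO(q_m)$ vs.\ $G_m\times G_m$ then gives the stated matching $\Pin(q_m)\git\SO(q_m)\simeq G_m^{[2]}\git(G_m\times G_m)$ of target spaces.

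The main obstacle I anticipate is bookkeeping around the groups versus their ``spin doubles'' and signs: one must be careful that $\Spin(q_m)$, not just $\Cl^{0,\times}$, is what matches $G_m\times G_m$, that the automorphism $\alpha$ (negation, hence the $(-1)^r$ and the grading) is tracked correctly through $j$ so that odd products land in $\Pin^1$, and — most delicately — that the $m=1$ case genuinely forces the primed space $A'(r,1)$ rather than $A(r,1)$ because $\SO(q_1)=\{1\}$ carries no information, so it is the $\Ort(q_1)$-quotient that corresponds to $B(r,\mu_2)=\mu_2^r/(\mu_2\times\mu_2)$. Beyond that, everything reduces to the explicit matrix identities already laid out in Example \ref{cliffordex} (e.g.\ $x\bar x=\det(x)=1$ on $\SL_2$), so the argument is essentially a careful transport of structure with no deep new input.
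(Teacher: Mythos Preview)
Your approach is essentially identical to the paper's: both transport structure through the Clifford isomorphisms of Example \ref{cliffordex} to identify the sphere quandle on $S(q_m)$ with the core quandle on $G_m$, then match the $\SO(q_m)$- (or $\Ort(q_1)$-) action with the $(G_m\times G_m)$-action and read off the Coxeter compatibility from the literal equality $j(u_1\otimes\cdots\otimes u_r)=(u_1,u_1^{-1})\iota\cdots(u_r,u_r^{-1})\iota$. One small correction: for $m=1,2$ the induced map of Coxeter targets $\Pin(q_m)\git\SO(q_m)\to G_m^{[2]}\git(G_m\times G_m)$ is only an \emph{embedding} (e.g.\ $\G_m\sqcup\{\ast\}\hookrightarrow\G_m^2\sqcup\G_m$ in the paper's diagram), not an isomorphism --- only for $m=4$ does $\Pin(q_4)=\SL_2^{[2]}$ hold on the nose, so your final sentence slightly overstates the conclusion.
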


\begin{proof}
(1) Let $q_1(x)=x^2$ be defined over $V_1=F$ as in Example \ref{cliffordex}(1). Under the isomorphism $j\colon\Cl(q_1)\simeq M_1$ given therein we have
$$\Pin(q_1)=\Pin^0(q_1)\sqcup\Pin^1(q_1)=\mu_2\sqcup\mu_2\iota=\mu_2\times\{1,\iota\}.$$
Note that $\Ort(q_1)=\mu_2$ acts on $\Pin^0(q)$ trivially, while it acts on $\Pin^1(q_1)=\mu_2\iota$ by the multiplicative structure of $\mu_2$. Thus, $\Pin(q_1)\git\Ort(q)=\mu_2\sqcup\{*\}$. Under the isomorphism above, for each $u,v\in S(q_1)$ we have
$$j(s_u(v))=j(u\otimes v\otimes u^{-1})=(u\iota)( v\iota) (u\iota)^{-1}=v\iota=j(uv^{-1}u)$$
where the product $uv^{-1}u$ is taken with respect to the group structure on $\mu_2$. This shows that the sphere quandle structure on $S(q_1)=\mu_2$ agrees with the core quandle structure. Thus, we have a $B_r$-equivariant isomorphism
$$A'(r,q_1)=S(q_1)^r\git\Ort(q_1)=\mu_2^r\git\mu_2=\mu_2^r\git(\mu_2\times\mu_2)=B(r,\mu_2)$$
where the $\mu_2^r\git\mu_2$ is taken with respect to the diagonal multiplication of $\mu_2$ on $\mu_2^r$. It remains to show that the isomorphism above preserves the Coxeter invariants. Our goal is to show the commutativity of the following diagram:
$$\xymatrix{
A'(r,q_1) \ar[d]^c \ar[rrr]^{\sim} & & &B(r,\mu_2)\ar[d]^c\\
\Pin(q_1)\git\Ort(q_1)\ar@{=}[r]&\mu_2\sqcup\{*\} \ar[r] &\mu_2^2\sqcup\mu_2&\ar@{=}[l]\mu_2^{[2]}\git(\mu_2\times\mu_2)}$$
where the embedding $\mu_2\sqcup\{*\}\to\mu_2^2\sqcup\mu_2$ maps $x\in\mu_2$ to $(x,x^{-1})\in\mu_2^2$ and maps $*$ to $1\in\mu_2$. Given $u=[u_1,\dots,u_r]\in A'(r,1)$
its image under the map $A'(r,q_1)\xrightarrow{c}\Pin(q_1)\git\Ort(q_1)=\mu_2\sqcup\{*\}$ is
$$[(u_1\iota)\cdots(u_r\iota)]=\left\{\begin{array}{l l}u_1\cdots u_r\in\mu_2&\text{if $r$ is even}\\\ast&\text{if $r$ is odd.}\end{array}\right.$$
If $u=[u_1,\dots,u_r]\in B(r,\mu_2)$, then under the map $B(r,\mu_2)\xrightarrow{c}\mu_2^{[2]}\git\mu_2^2=\mu_2^2\sqcup\mu_2$ we have
$$[(u_1,u_1^{-1})\iota\cdots (u_r,u_r^{-1})\iota]=\left\{\begin{array}{l l}(u_1\cdots u_r,(u_1\cdots u_r)^{-1})&\text{if $r$ is even}\\
1&\text{if $r$ is odd.}\end{array}\right.$$
Thus, the isomorphism $A'(r,q_1)\simeq B(r,\mu_2)$ respects the Coxeter invariants.

(2) Let $q_2(x,y)=xy$ be defined over $V_2=F^2$ as in Example \ref{cliffordex}(2). Under the isomorphism $j\colon\Cl(q_2)\simeq M_2$ given therein we have
$$\Pin(q_2)=\Pin^0(q_2)\sqcup\Pin^1(q_2)=\G_m\sqcup\G_m\iota=\G_m\rtimes\{1,\iota\}$$
where $\iota(x,x^{-1})=(x^{-1},x)\iota$ for any $(x,x^{-1})\in\G_m$. Note that $\Spin(q_2)=\G_m$ acts on $\Spin(q_2)$ by trivial conjugation, while it acts on $\Pin^1(q_2)=\G_m\iota$ by
$$(x,x^{-1})(y,y^{-1})\iota (x^{-1},x)=(x^2y,x^{-2}y^{-1})$$
for $(x,x^{-1})\in\Spin(q_2)$ and $(y,y^{-1})\iota\in\Pin^1(q_2)$. Thus $\Pin(q_2)\git\SO(q_2)=\G_m\sqcup\{*\}$.
Under the isomorphism above, for each $u,v\in S(q_2)$ we have
$$j(s_u(v))=j(u\otimes v\otimes u^{-1})=(u\iota)(v\iota)(u\iota)^{-1}=(uv^{-1}u)\iota=j(uv^{-1}u)$$
where the product $uv^{-1}u$ is taken with respect to the group structure on $\G_m$. This shows that the sphere quandle structure on $S(q_2)=\G_m$ agrees with the core quandle structure. Thus, we have a $B_r$-equivariant isomorphism
$$A(r,q_2)=S(q_2)^r\git\SO(q_2)=\G_m^r\git\G_m\simeq\G_m^r\git(\G_m\times\G_m)=B(r,\G_m)$$
where the quotient $\G_m^r\git\G_m$ is taken with respect to the diagonal multiplication (by square) of $\G_m$ on $\G_m^2$. It remains to show that the isomorphism above preserves the Coxeter invariants. Our goal is to show the commutativity of the diagram:
$$\xymatrix{
A(r,q_2) \ar[d]^c \ar[rrr]^{\sim} & & &B(r,\G_m)\ar[d]^c\\
\Pin(q_2)\git\SO(q_2)\ar@{=}[r]&\G_m\sqcup\{*\} \ar[r] &\G_m^2\sqcup\G_m&\ar@{=}[l]\G_m^{[2]}\git(\G_m\times\G_m)}$$
where the embedding $\G_m\sqcup\{*\}\to\G_m^2\sqcup\G_m$ maps $x\in\G_m$ to $(x,x^{-1})\in\G_m^2$ and maps $*$ to $1\in\G_m$. Given $u=[u_1,\dots,u_r]\in A(r,q_2)$, its image under the map $A(r,q_2)\xrightarrow{c}\Pin(q_2)\git\SO(q_2)=\G_m\sqcup\{*\}$ is
$$[(u_1\iota)\cdots(u_r\iota)]=\left\{\begin{array}{l l}u_1u_2^{-1}\cdots u_{r-1}u_r^{-1}\in\G_m&\text{if $r$ is even}\\\ast&\text{if $r$ is odd.}\end{array}\right.$$
If $u=[u_1,\dots,u_r]\in B(r,\G_m)$, then $c(u)\in\G_m^{[2]}\git(\G_m\times\G_m)=\G_m^2\sqcup\G_m$ is
$$[(u_1,u_1^{-1})\iota\cdots (u_r,u_r^{-1})\iota]=\left\{\begin{array}{l l}(u_1u_2^{-1}\cdots u_{r-1}u_r^{-1},u_1^{-1}u_2\cdots u_{r-1}^{-1}u_r)&\text{if $r$ is even}\\
1&\text{if $r$ is odd.}\end{array}\right.$$
Thus, the isomorphism $A(r,q_2)\simeq B(r,\G_m)$ respects the Coxeter invariants.

(3) Let $q_4(x_{ij})=\det(x_{ij})$ be defined over $V_4=\Mat_2$ as in Example \ref{cliffordex}(3). Under the isomorphism $j\colon\Cl(q_4)\simeq M_4$ given therein we have
$$\Pin(q_4)=\Pin^0(q_4)\sqcup\Pin^1(q_4)=(\SL_2\times\SL_2)\sqcup(\SL_2\times\SL_2)\iota=\SL_2^{[2]}.$$
Note that $\Spin(q_4)=\SL_2\times\SL_2$ acts on $\SL_2^{[2]}$ by conjugation, and it follows by Proposition \ref{conj} that $$\Pin(q_4)\git\SO(q_4)=\Pin(q_4)\git\Spin(q_4)=W(\SL_2)^2\sqcup W(\SL_2).$$
Under the isomorphism above, for each $u,v\in S(q_4)$ we have
\begin{align*}
    j(s_u(v))&=j(u\otimes v\otimes u^{-1})\\
    &=((u,u^{-1})\iota)((v,v^{-1})\iota)((u,u^{-1})\iota)^{-1}\\
    &=(uv^{-1}u,u^{-1}vu^{-1})\iota=j(uv^{-1}u).
\end{align*}
where the products such as $uv^{-1}u$ are taken with respect to the group structure on $\SL_2$. This shows that the sphere quandle structure on $S(q_4)=\SL_2$ agrees with the core quandle structure. Thus, we have a $B_r$-equivariant isomorphism
$$A(r,q_4)=S(q_4)^r\git\SO(q_4)=\SL_2^r\git(\SL_2\times\SL_2)=B(r,\SL_2).$$
It remains to show that the isomorphism above preserves the Coxeter invariants. Our goal is to show the commutativity of the following diagram:
$$\xymatrix{
A(r,q_4) \ar[d]^c \ar[rr]^{\sim} &  &B(r,\SL_2)\ar[d]^c\\
\Pin(q_4)\git\SO(q_4)\ar@{=}[r]&W(\SL_2)^2\sqcup W(\SL_2) &\ar@{=}[l]\SL_2^{[2]}\git(\SL_2\times\SL_2).}$$
Given $u=[u_1,\dots,u_r]\in A(r,q_4)$, we have
$$c(u)=[(u_1,u_1^{-1})\iota\cdots(u_r,u_r^{-1})\iota]\in\Pin(q_4)\git\SO(q_4)=\SL_2^{[2]}\git\SL_2^2$$
which agrees with the image of $u$ under $A(r,q_4)\simeq B(r,\SL_2)\xrightarrow{c}\SL_2^{[2]}\git\SL_2^2$. Thus, the isomorphism $A(r,q_4)\simeq B(r,\SL_2)$ respects the Coxeter invariants.
\end{proof}

\section{Moduli of local systems}\label{sect:4}

Let $G$ be a reductive algebraic group over a field $F$ of characteristic zero. In the previous section, we defined the space $B(r,G)$ and showed that the core quandle structure on $G$ induces an action of the braid group $B_r$ on $B(r,G)$. We introduced the Coxeter invariant $B(r,G)\to G^{[2]}\git G^2$, which is a $B_r$-invariant morphism. As we shall see, the space $B(r,G)$ can be viewed as a coarse moduli space of $G$-local systems on a graph ($1$-dimensional finite simplicial complex).

This section is organized as follows. In Section \ref{sect:4.1}, we discuss moduli of $G$-local systems on graphs, and establish an isomorphism between $B(r,G)$ and a new space denoted $C(r,G)$. In Section \ref{sect:4.2}, we consider the moduli of $G$-local systems on surfaces, and prove that $C(r,G)$ is isomorphic to the moduli space $X(\Sigma_{g,n},G)$ of $G$-local systems on the surface $\Sigma_{g,n}$ of genus $g$ with $n$ punctures, where
$$g=\left\lfloor\frac{n-1}{2}\right\rfloor\quad\text{and}\quad r=n-2g\in\{1,2\}.$$
The isomorphisms established in Sections \ref{sect:4.1} and \ref{sect:4.2} will be $B_r$-equivariant and preserve invariants. Finally, in Section \ref{sect:4.3}, we combine this with the sporadic isomorphisms given in Section \ref{sect:3.2} to complete the proof of Theorem \ref{mainthm}.

\subsection{Graphs}\label{sect:4.1}
Let $\pi$ be a finitely generated group. Let $G$ be a reductive algebraic group over a field $F$ of characteristic zero. The \emph{$G$-representation variety} $\Rep(\pi,G)$ of $\pi$ is the affine scheme defined by the functor
$$A\mapsto\Hom(\pi, G(A))$$
for every $F$-algebra $A$. The \emph{$G$-character variety of $\pi$} is defined to be the invariant theoretic quotient
$$X(\pi,G)=\Hom(\pi,G)\git G=F[\Hom(\pi,G)]^G$$
of the $G$-representation variety with respect to the conjugation action of $G$. The group $\Out(\pi)$ of outer automorphisms of $\pi$ has a natural right action on $X(\pi,G)$ by pullback of representations.

\begin{definition}
Given a connected manifold or finite simplicial complex $M$, the \emph{$G$-character variety of $M$} is the $G$-character variety of its fundamental group: $$X(M,G)=X(\pi_1(M),G).$$
\end{definition}

The variety $X(M,G)$ is also the (coarse) moduli space of $G$-local systems on $M$, and we will also refer to it as such.

\begin{example}
As in Section 2, we shall denote by $W(G)=G\git G$ the quotient of $G$ by the conjugation action of $G$. If $S^1$ denotes the oriented circle, monodromy along the generator of $\pi_1(S^1)$ gives us an isomorphism
$$X(S^1,G)\simeq W(G).$$
\end{example}

Suppose that $M$ is a finite connected graph, i.e.~$1$-dimensional simplicial complex. We can give an explicit presentation for $X(M,G)$ as follows. Let $V(M)$ and $E(M)$ respectively denote the sets of vertices and edges of $M$. Let us equip $M$ with the structure of a quiver, so that each edge $e\in E(M)$ has a source vertex $s(e)$ and a target vertex $t(e)$ in $V(M)$. The quiver structure gives us an isomorphism
$$X(M,G)\simeq G^{E(M)}\git G^{V(M)}$$
where the group $G^{V(M)}$ acts on $G(^{E(M)})$ by
$$(g_v)\cdot (h_e)=(g_{t(e)}h(e)g_{s(e)}^{-1})$$
for every $(g_v)\in G^{V(M)}$ and $(h_e)\in G^{E(M)}$.

\begin{example}
(1) Let $M_r$ denote the graph with vertex set $V(M)=\{v_1,v_2\}$ of size $2$ and edge set $E(M)=\{e_1,\dots,e_r\}$ of $r$ such that each edge joins $v_1$ and $v_2$. Let us endow $M_r$ with a quiver structure so that $s(e_i)=v_1$ and $t(e_i)=v_2$ for all $i=1,\dots,r$. We then have
$$X(M_r,G)\simeq G^r\git G^2=B(r,G).$$
(2) Let now $N_r$ denote the graph with vertex set $V(M)=\{w\}$ and edge set $E(M)=\{d_1,\dots,d_{r-1}\}$ of size $r-1$ such that each edge joins $w$ to itself. Let us endow $N_r$ with a quiver structure. We then have
$$X(N_r,G)\simeq G^{r-1}\git G$$
where $G$ acts on $G^{r-1}$ via diagonal conjugation.
\end{example}

\begin{definition}
Let $G$ be a group, and let $r\geq2$ be an integer. Let
$$C(r,G)=G^{r-1}\git G$$
be the quotient of $G^{r-1}$ by diagonal conjugation action of $G$. \end{definition}

Consider continuous map $f\colon N_r\to M_r$ sending the vertex $w$ to $v_1$ and each edge $d_i$ to the concatenation of edges $e_i$ and $e_{i+1}$. Since $f$ is a homotopy equivalence, and it induces an isomorphism
$$B(r,G)\simeq X(M_r,G)\simeq X(N_r,G)\simeq C(r,G).$$
The following proposition gives an explicit description of this isomorphism.

\begin{proposition}
\label{cd}
Let $G$ be a group, and let $r\geq2$ be an integer. We have an isomorphism $\Phi\colon B(r,G)\to C(r,G)$ given at the level of representatives by $$\Phi(a_1,\dots,a_r)=(a_1a_2^{-1},\dots,a_{r-1}a_r^{-1}),$$
with the inverse isomorphism $\Psi\colon C(r,G)\to B(r,G)$ given by
$$\Psi(b_1,\dots,b_{r-1})\mapsto((b_1\cdots b_{r-1}),(b_2\cdots b_{r-1}),\dots,b_{r-1},1).$$
The right $B_r$-action induced via $\Phi$ on $C(r,G)$ is described in terms of the standard generators of $B_r$ as follows:
\begin{enumerate}
    \item We have $\sigma_1^*(b_1,\dots,b_{r-1})=(b_1,b_1b_2,b_3,\dots,b_{r-1}).$
    \item If $1<i<r-1$, we have
    $$\sigma_i^*(b_1,\dots,b_{r-1})=(b_1,\dots,b_{i-2},b_{i-1}b_i^{-1},b_i,b_ib_{i+1},b_{i+2},\dots,b_{r-1}).$$
    \item We have $\sigma_{r-1}^*(b_1,\dots,b_{r-1})=(b_1,\dots,b_{r-2}b_{r-1}^{-1},b_{r-1})$.
\end{enumerate}
\end{proposition}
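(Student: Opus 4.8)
The plan is to check that the explicit maps $\Phi$ and $\Psi$ on representatives descend to the quotients, that they are mutually inverse there, and then to transport the braid action along them and compute. First I would verify well-definedness. If $(x,y)\in G\times G$ acts on $(a_1,\dots,a_r)$ by $a_i\mapsto xa_iy^{-1}$, then $a_ia_{i+1}^{-1}\mapsto x a_ia_{i+1}^{-1}x^{-1}$, so $\Phi$ intertwines the $(G\times G)$-action on $G^r$ with the diagonal conjugation action of $G$ on $G^{r-1}$ through the first projection $G\times G\to G$; hence $\Phi$ descends to $B(r,G)\to C(r,G)$. Conversely, conjugating $(b_1,\dots,b_{r-1})$ by $g\in G$ conjugates every partial product $b_j\cdots b_{r-1}$ by $g$ and fixes the trailing entry $1$, i.e.\ it is the action of $(g,g)\in G\times G$ on $\Psi(b_1,\dots,b_{r-1})$; so $\Psi$ descends to $C(r,G)\to B(r,G)$.

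Next I would show $\Phi$ and $\Psi$ are inverse. A telescoping computation gives $\Phi(\Psi(b_1,\dots,b_{r-1}))=(b_1,\dots,b_{r-1})$ on the nose. In the other direction $\Psi(\Phi(a_1,\dots,a_r))=(a_1a_r^{-1},a_2a_r^{-1},\dots,a_{r-1}a_r^{-1},1)$, which is the image of $(a_1,\dots,a_r)$ under $(1,a_r)\in G\times G$, hence the two agree in $B(r,G)$. Alternatively one can simply observe that this is the explicit form of the isomorphism $B(r,G)\simeq X(M_r,G)\simeq X(N_r,G)\simeq C(r,G)$ already produced from the homotopy equivalence $f\colon N_r\to M_r$ collapsing each pair of edges $e_i,e_{i+1}$ of $M_r$ to the loop $d_i$ of $N_r$; the two approaches are interchangeable.

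Finally I would transport the $B_r$-action. By Corollary \ref{braidond}, $\sigma_i^*$ on $B(r,G)$ sends $(a_1,\dots,a_r)$ to $(a_1,\dots,a_{i-1},a_ia_{i+1}^{-1}a_i,a_i,a_{i+2},\dots,a_r)$; the induced action on $C(r,G)$ is $b\mapsto \Phi(\sigma_i^*(\Psi(b)))$. Writing $c_j=b_jb_{j+1}\cdots b_{r-1}$ for $j\le r-1$ and $c_r=1$, so that $\Psi(b)=(c_1,\dots,c_r)$ and $b_j=c_jc_{j+1}^{-1}$, I would apply $\sigma_i^*$ to $(c_1,\dots,c_r)$, take the consecutive ratios (this is $\Phi$), and rewrite each entry in terms of the $b_j$: the entries at positions away from $i-1,i,i+1$ are left unchanged, while the three affected consecutive ratios telescope to $b_{i-1}b_i^{-1}$, $b_i$, and $b_ib_{i+1}$. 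The boundary generators $\sigma_1$ (no position $i-1$) and $\sigma_{r-1}$ (where $c_r=1$ collapses $c_{r-1}c_r^{-1}c_{r-1}$ to $c_{r-1}^2$) are handled identically and yield the stated formulas (1) and (3).

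The only real obstacle is bookkeeping: keeping the index conventions consistent and treating $\sigma_1$ and $\sigma_{r-1}$ separately, since the generic formula (2) degenerates at both ends. There is no conceptual difficulty once the convention encoded in $\Phi$ and $\Psi$ is fixed.
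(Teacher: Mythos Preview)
Your proposal is correct and follows essentially the same approach as the paper: the paper likewise declares that $\Phi$ and $\Psi$ are easily checked to be well-defined mutual inverses, and then computes $\sigma_i^*$ on $C(r,G)$ as $\Phi\sigma_i^*\Psi$ using Corollary~\ref{braidond}, treating the generic case $1<i<r-1$ explicitly and remarking that $\sigma_1^*$ and $\sigma_{r-1}^*$ are derived similarly. Your write-up is in fact more detailed than the paper's on the well-definedness and inverse checks.
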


\begin{proof}
It is easy to check that $\Phi$ and $\Psi$ are well-defined, and are mutual inverses of each other. The description of the induced braid group action on $C(r,G)$ follows directly from this and Corollary \ref{braidond}. For example, if $1<i<r-1$, we have
\begin{align*}
    &\sigma_i^*(b_1,\dots,b_{r-1})\\
    &=\Phi\sigma_i^*\Psi(b_1,\dots,b_{r-1})\\
    &=\Phi\sigma_i^*((b_1\cdots b_{r-1}),(b_2\cdots b_{r-1}),\cdots,b_{r-1},1)\\
    &=\Phi((b_1\cdots b_{r-1}),\dots,(b_{i-1}\cdots b_{r-1}),b_i(b_i\cdots b_{r-1}),(b_i\cdots b_{r-1}),\dots,b_{r-1},1)\\
    &=(b_1,\dots,b_{i-2},b_{i-1}b_i^{-1},b_i,b_ib_{i+1},b_{i+2},\dots,b_{r-1}).
\end{align*}
The expressions for $\sigma_1^*$ and $\sigma_{r-1}^*$ can be derived similarly.
\end{proof}

\begin{definition}
The \emph{Coxeter invariant} on $C(r,G)$ is the composition
$$c\colon C(r,G)\xrightarrow{\Psi}B(r,G)\xrightarrow{c}G^{[2]}\git(G\times G)$$
where $\Psi$ is given as in Proposition \ref{cd}.
\end{definition}

\begin{corollary}
\label{c-coxeter}
Let $G$ be a group. Given $b=(b_1,\dots,b_{r-1})\in C(r,G)$, its Coxeter invariant $c(b)$ is given explicitly by
$$c(b)=\left\{\begin{array}{l l}(b_1b_3\cdots b_{r-1},(b_1b_2\cdots b_{r-1})^{-1}(b_2b_4\cdots b_{r-2}))\in W(G)^2&\text{if $r$ is even}\\(b_1 b_3\cdots b_{r-2})(b_1b_2\cdots b_{r-1})^{-1}(b_2b_4\cdots b_{r-1})\in W(G)&\text{if $r$ is odd.}
\end{array}\right.$$
Here we use the identification $G^{[2]}\git(G\times G)\simeq W(G)^2\sqcup W(G)$ proved in Proposition~\ref{conj}.
\end{corollary}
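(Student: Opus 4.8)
The plan is to compute $c(b)$ directly from its definition as $c(\Psi(b))$, using the explicit formula for $\Psi$ in Proposition~\ref{cd} and the definition of the Coxeter invariant on $B(r,G)$ from Proposition~\ref{group-coxeter}, and then to identify the resulting element of $G^{[2]}\git(G\times G)$ by means of Proposition~\ref{conj}. Write $\Psi(b_1,\dots,b_{r-1})=(a_1,\dots,a_r)$, so that $a_i=b_ib_{i+1}\cdots b_{r-1}$ for $1\le i\le r-1$ and $a_r=1$; the key elementary identity is the telescoping relation $a_ia_{i+1}^{-1}=b_i$ for $1\le i\le r-1$. By definition,
\[
c(b)=\bigl[(a_1,a_1^{-1})\iota\,(a_2,a_2^{-1})\iota\cdots(a_r,a_r^{-1})\iota\bigr]\in G^{[2]}\git(G\times G).
\]

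The first step is to simplify this product inside $G^{[2]}=(G\times G)\rtimes C_2$ by moving every occurrence of $\iota$ to the right, using $\iota(x,y)=(y,x)\iota$ and $\iota^2=1$. A short induction shows that
\[
(a_1,a_1^{-1})\iota\cdots(a_r,a_r^{-1})\iota=(a_1,a_1^{-1})(a_2^{-1},a_2)(a_3,a_3^{-1})(a_4^{-1},a_4)\cdots\varepsilon,
\]
where the $i$-th factor is $(a_i,a_i^{-1})$ if $i$ is odd and $(a_i^{-1},a_i)$ if $i$ is even, and where $\varepsilon=1$ if $r$ is even while $\varepsilon=\iota$ if $r$ is odd. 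In particular the product lies in $G\times G$ when $r$ is even and in $(G\times G)\iota$ when $r$ is odd, matching the two cases in Proposition~\ref{conj}.

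Next I read off the two coordinates of this product and regroup them into consecutive pairs, applying the telescoping identity together with $a_1=b_1\cdots b_{r-1}$ and $a_r=1$. For $r$ even the product equals $(d,e)\in G\times G$ with $d=(a_1a_2^{-1})(a_3a_4^{-1})\cdots(a_{r-1}a_r^{-1})=b_1b_3\cdots b_{r-1}$ and $e=a_1^{-1}(a_2a_3^{-1})\cdots(a_{r-2}a_{r-1}^{-1})a_r=(b_1b_2\cdots b_{r-1})^{-1}b_2b_4\cdots b_{r-2}$; passing to $(G\times G)\git(G\times G)\simeq W(G)^2$ gives the stated formula. For $r$ odd the product equals $(d,e)\iota$ with $d=(a_1a_2^{-1})\cdots(a_{r-2}a_{r-1}^{-1})a_r=b_1b_3\cdots b_{r-2}$ and $e=a_1^{-1}(a_2a_3^{-1})\cdots(a_{r-1}a_r^{-1})=(b_1b_2\cdots b_{r-1})^{-1}b_2b_4\cdots b_{r-1}$; applying the isomorphism $(G\times G)\iota\git(G\times G)\simeq W(G)$ of Proposition~\ref{conj}, which sends the class of $(d,e)\iota$ to $[de]$, yields $c(b)=[(b_1b_3\cdots b_{r-2})(b_1b_2\cdots b_{r-1})^{-1}(b_2b_4\cdots b_{r-1})]$, as claimed.

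No conceptual difficulty arises; the only thing to watch is the bookkeeping — tracking which of the two $G$-coordinates each factor lands in once the $\iota$'s have been collected, and handling the parity of $r$ separately — but once the relation $a_ia_{i+1}^{-1}=b_i$ is recorded this is entirely mechanical.
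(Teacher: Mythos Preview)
Your proof is correct and follows essentially the same route as the paper: both compute the product $(a_1,a_1^{-1})\iota\cdots(a_r,a_r^{-1})\iota$ with $a_i=b_i\cdots b_{r-1}$, collapse the alternating pattern via $a_ia_{i+1}^{-1}=b_i$, and then invoke Proposition~\ref{conj}. The only cosmetic difference is that you name the $a_i$ and the telescoping identity explicitly, whereas the paper writes everything directly in the $b_i$'s.
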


\begin{proof}
If $r$ is even, then $c(b)\in(G\times G)\git(G\times G)\simeq W(G)^2$ with the first component in $W(G)^2$ given by
\[
(b_1b_2\cdots b_{r-1})(b_2b_3\cdots b_{r-1})^{-1}\cdots(b_{r-3}b_{r-2}b_{r-1})(b_{r-2}b_{r-1})^{-1}b_{r-1}=b_1b_3\cdots b_{r-1},
\]
and the second component given by
\begin{multline*}
(b_1b_2\cdots b_{r-1})^{-1}(b_2b_3\cdots b_{r-1})\cdots(b_{r-3}b_{r-2}b_{r-1})^{-1}(b_{r-2}b_{r-1})b_{r-1}^{-1}\\
=(b_1b_2\cdots b_{r-1})^{-1}(b_2b_4\cdots b_{r-2}).
\end{multline*}

If $r$ is odd, then $c(b)=(d(b),e(b))\iota\in(G\times G)\iota\git(G\times G)\simeq W(G)$, where
\[
d(b)=(b_1b_2\cdots b_{r-1})(b_2b_3\cdots b_{r-1})^{-1}\cdots(b_{r-2}b_{r-1})b_{r-1}^{-1}=b_1 b_3\cdots b_{r-2}
\]
and
\begin{align*}
e(b)&=(b_1b_2\cdots b_{r-1})^{-1}(b_2b_3\cdots b_{r-1})\cdots(b_{r-2}b_{r-1})^{-1}b_{r-1}\\
&=(b_1b_2\cdots b_{r-1})^{-1}(b_2b_4\cdots b_{r-1}).
\end{align*}
Recall that the identification $(G\times G)\iota\git(G\times G)\simeq W(G)$ in Proposition~\ref{conj} maps $(d(b),e(b))\iota$ to the element $d(b)e(b)\in W(G)$.
This completes our proof.
\end{proof}

\subsection{Surfaces}\label{sect:4.2}
Throughout this paper, by a \emph{surface} we shall mean a compact oriented manifold of real dimension $2$ with possibly nonempty boundary. Given a surface $\Sigma$, we shall endow its boundary curves with orientations consistent with the orientation of the surface. The inclusion $\del \Sigma\to \Sigma$ of the boundary curves into $\Sigma$ induces a morphism
$$c\colon X(\Sigma,G)\to W(G)^{\pi_0(\del \Sigma)}$$
from the $G$-character variety of $\Sigma$ to the product of $|\pi_0(\del\Sigma)|$ copies of $W(G)$, given by sending a representation $\rho$ to the sequence of its monodromy classes along the boundary curves of $\Sigma$.

Given a surface $\Sigma$, let $\Gamma(\Sigma)$ denote the pure mapping class group of $\Sigma$. It is the group of isotopy classes of orientation-preserving homeomorphisms of the surface fixing the punctures and boundary curves pointwise. Given a reductive algebraic group $G$ over a field of characteristic zero and a connected surface $\Sigma$, we have a natural right action of $\Gamma(\Sigma)$ on the character variety $X(\Sigma,G)$ by pullback of representations, factoring through the morphism
$$\Gamma(\Sigma)\to\Out(\pi_1(\Sigma)).$$
Given a surface $\Sigma$ and a simple closed curve $a\subset\Sigma$, we shall denote by $\tau_a\in\Gamma(\Sigma)$ the associated left Dehn twist along $a$.

Suppose now that $\Sigma_{g,n}$ is a connected surface of genus $g\geq1$ with $n\in\{1,2\}$ boundary curves. Let us implicitly fix a base point $x\in \Sigma_{g,n}$ on the interior of $\Sigma_{g,n}$. The fundamental group of $\Sigma_{g,n}$ is free of rank $2g+n-1$. We introduce a preferred sequence of free generators below, using a ribbon graph presentation of $\Sigma_{g,n}$.

\begin{definition}
A \emph{hyperelliptic sequence of generators} for $\pi_1(\Sigma_{g,n})$ is a sequence $$(\alpha_1,\dots,\alpha_{2g+n-1})$$
of simple based loops on $\Sigma_{g,n}$ arranged as shown in Figure \ref{fig1}.
\end{definition}

\begin{figure}[ht]
    \centering
    \includegraphics[scale=0.8]{./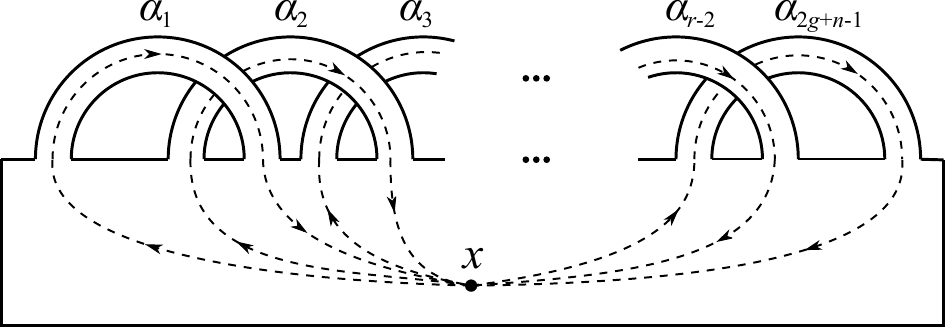}
    \caption{Hyperelliptic generators for $\pi_1(\Sigma_{g,n})$}
    \label{fig1}
\end{figure}

It is clear that the loops $\alpha_1,\dots,\alpha_{2g+n-1}$ together freely generate $\pi_1(\Sigma_{g,n})$. The above definition is motivated by the following.

\begin{proposition}
\label{braidprop}
Let $r\geq3$ be an integer, and write $r=2g+n$ with $n\in\{1,2\}$. Let $\Sigma_{g,n}$ be a surface of genus $g$ with $n$ boundary curves, and let $(\alpha_1,\dots,\alpha_{r-1})$ be a hyperelliptic sequence of generators for $\pi_1(\Sigma_{g,n})$. For $i=1,\dots,r-1$, let $\tau_i\in\Gamma(\Sigma_{g,n})$ be the left Dehn twist along the simple closed curve underlying $\alpha_i$.

\begin{enumerate}
    \item There is an embedding $B_{r}\to\Gamma(\Sigma_{g,n})$ sending $\sigma_i\mapsto\tau_i$ for $i=1,\dots,r-1$.
    \item The isomorphism
$$X(\Sigma_{g,n},G)\simeq C(r,G)$$
given by $\rho\mapsto(\rho(\alpha_1),\dots,\rho(\alpha_{r-1}))$ is $B_r$-equivariant for the $B_r$-action on $X(\Sigma_{g,n},G)$ induced by the braid group embedding $\textup{(1)}$ and the $B_r$-action on $C(r,G)$ given in Proposition \ref{cd}.
    \item The isomorphism in $\textup{(2)}$ gives rise to a commutative diagram
    $$\xymatrix{
X(\Sigma_{g,n},G) \ar[d]^k \ar[r]^{\sim} &C(r,G)\ar[d]^c\\
W(G)^n \ar@{=}[r] &W(G)^n}$$
where the vertical arrow on the left hand side sends $\rho\in X(\Sigma_{g,n},G)$ to its monodromy along the boundary curves, and where the vertical arrow on the right hand side is the morphism $c$ introduced in Corollary \ref{c-coxeter}.
\end{enumerate}
\end{proposition}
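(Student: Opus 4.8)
The plan is to extract all the geometry from the chain of simple closed curves $c_1,\dots,c_{r-1}$ underlying the hyperelliptic loops $\alpha_1,\dots,\alpha_{r-1}$: from Figure \ref{fig1} one reads off that these curves form a \emph{chain}, meaning $c_i$ meets $c_{i+1}$ transversally in one point for each $i$ while $c_i$ and $c_j$ are disjoint for $|i-j|\geq 2$, and that a regular neighborhood of $c_1\cup\dots\cup c_{r-1}$ is isotopic to all of $\Sigma_{g,n}$ (this is exactly what the numerology $g=\lfloor(r-1)/2\rfloor$, $n=r-2g$ records). For part (1) I would first invoke the two standard local relations between Dehn twists --- twists along disjoint curves commute, and twists along two curves meeting once satisfy the braid relation --- to conclude that $\sigma_i\mapsto\tau_i$ defines a homomorphism $\varphi\colon B_r\to\Gamma(\Sigma_{g,n})$. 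Injectivity I would deduce from the Birman--Hilden correspondence: realizing $\Sigma_{g,n}$ as the double cover of a disk branched at $r$ points, the hyperelliptic involution identifies the relevant symmetric mapping class group with the mapping class group $B_r$ of the $r$-marked disk in such a way that the $\tau_i$ correspond to the standard half-twists, so $\varphi$ is injective.

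For part (2), the isomorphism $X(\Sigma_{g,n},G)\simeq C(r,G)$ is immediate: $\alpha_1,\dots,\alpha_{r-1}$ freely generate $\pi_1(\Sigma_{g,n},x)$, so $\rho\mapsto(\rho(\alpha_1),\dots,\rho(\alpha_{r-1}))$ identifies $\Hom(\pi_1(\Sigma_{g,n}),G)$ with $G^{r-1}$ carrying diagonal conjugation, exactly as in the graph $N_r$. To prove $B_r$-equivariance I would compute, for each $i$, the automorphism $(\tau_i)_*$ of $\pi_1(\Sigma_{g,n})$ on the generators. The chain structure makes this almost automatic: $(\tau_i)_*\alpha_j=\alpha_j$ whenever $|i-j|\geq 2$ (choosing a representative of $\alpha_j$ disjoint from $c_i$) and $(\tau_i)_*\alpha_i=\alpha_i$ (as $\tau_i$ preserves $c_i$); the only substantive computations are $(\tau_i)_*\alpha_{i\pm1}$, which follow from the standard formula for the effect of a Dehn twist on an arc crossing the twisting curve once, yielding $\alpha_{i-1}\mapsto\alpha_{i-1}\alpha_i^{-1}$ and $\alpha_{i+1}\mapsto\alpha_i\alpha_{i+1}$, with the evident variants at $i=1$ and $i=r-1$. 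Since $\Gamma(\Sigma_{g,n})$ acts on $X(\Sigma_{g,n},G)$ by $\rho\mapsto\rho\circ(\tau_i)_*$, comparing with Proposition \ref{cd} shows this is precisely the $B_r$-action there.

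For part (3), I would read off from the ribbon graph the free homotopy classes of the boundary curves as words in the $\alpha_j$: when $r$ is odd ($n=1$) the single boundary curve is $(\alpha_1\alpha_3\cdots\alpha_{r-2})(\alpha_1\alpha_2\cdots\alpha_{r-1})^{-1}(\alpha_2\alpha_4\cdots\alpha_{r-1})$, and when $r$ is even ($n=2$) the two boundary curves are $\alpha_1\alpha_3\cdots\alpha_{r-1}$ and $(\alpha_1\alpha_2\cdots\alpha_{r-1})^{-1}(\alpha_2\alpha_4\cdots\alpha_{r-2})$. After substituting $b_j=\rho(\alpha_j)$ these are exactly the expressions in Corollary \ref{c-coxeter}, so the square commutes; as a consistency check, each $\tau_i$ fixes every boundary curve, in agreement with the $B_r$-invariance of the Coxeter invariant, and one can verify directly --- as in the proof of Corollary \ref{c-coxeter} --- that these words are fixed by the moves of Proposition \ref{cd}.

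The hard part will be part (2): making every orientation and handedness convention consistent --- the chosen orientation of each $\alpha_j$, left versus right Dehn twists, and the left/right conventions for the $\Gamma$- and $B_r$-actions --- so that the geometrically computed $(\tau_i)_*$ agree on the nose with the combinatorial moves of Proposition \ref{cd}; in a close second place is the bookkeeping in part (3) needed to identify the boundary words correctly from the ribbon graph.
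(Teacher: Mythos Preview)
Your proposal is correct and follows essentially the same approach as the paper: for (1) you invoke the chain relations among the $c_i$ and Birman--Hilden, for (2) you compute the same Dehn twist formulas $\tau_i(\alpha_{i-1})=\alpha_{i-1}\alpha_i^{-1}$, $\tau_i(\alpha_{i+1})=\alpha_i\alpha_{i+1}$ and compare with Proposition \ref{cd}, and for (3) you read off the boundary words from the ribbon graph exactly as the paper does. Your version is somewhat more detailed in spelling out the chain structure and the orientation caveats, but the argument is the same.
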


\begin{proof}
(1) The fact that the assignment $\sigma_i\mapsto\tau_i$ above gives rise to an embedding of $B_r$ into $\Gamma(\Sigma_{g,n})$ is classical and due to Birman and Hilden \cite{BirHil1, BirHil2}. 

(2) For convenience, we shall denote a simple loop lying in the same free homotopy class as the product of loops such as $\alpha_1\alpha_2$ by the same letters. We have the following.
\begin{itemize}
    \item $\tau_1(\alpha_2)=\alpha_1\alpha_2$ and $\tau_1(\alpha_i)=\alpha_i$ for all $i\neq2$.
    \item If $1<i<r-1$, then we have
    $$\left\{\begin{array}{l}\tau_i(\alpha_{i-1})=\alpha_{i-1}\alpha_i^{-1},\\\tau_i(\alpha_i)=\alpha_i,\quad\text{and}\\\tau_i(\alpha_{i+1})=\alpha_{i}\alpha_{i+1}\end{array}\right.$$
    while $\tau_i(\alpha_j)=\alpha_j$ whenever $|i-j|\geq2$.
    \item $\tau_{r-1}(\alpha_{r-2})=\alpha_{r-2}\alpha_{r-1}^{-1}$ and $\tau_{r-1}(\alpha_i)=\alpha_i$ for all $i\neq r-2$.
\end{itemize}
Combining this with the description of the braid group action on $C(r,G)$ given in Proposition \ref{cd}, we obtain the desired result.

(3) By Corollary \ref{c-coxeter}, if $b=(b_1,\dots,b_{r-1})\in C(r,G)$ then we have
\begin{itemize}
    \item $c(b)=(b_1b_3\cdots b_{r-1},(b_1b_2\cdots b_{r-1})^{-1}(b_2b_4\cdots b_{r-2}))$ if $r$ is even, while
    \item $c(b)=(b_1 b_3\cdots b_{r-2})(b_1b_2\cdots b_{r-1})^{-1}(b_2b_4\cdots b_{r-1})$ if $r$ is odd.
\end{itemize}
Then (3) follows simply by observing that the loop(s)
\begin{itemize}
    \item $\alpha_1\alpha_3\cdots \alpha_{r-1}$ and $(\alpha_1\alpha_2\cdots \alpha_{r-1})^{-1}(\alpha_2\alpha_4\cdots \alpha_{r-2})$ (if $r$ is even)
    \item $(\alpha_1 \alpha_3\cdots \alpha_{r-2})(\alpha_1\alpha_2\cdots \alpha_{r-1})^{-1}(\alpha_2\alpha_4\cdots \alpha_{r-1})$ (if $r$ is odd)
\end{itemize}
are freely homotopic to parametrizations of the boundary curve(s) of $\Sigma_{g,n}$. This can be readily seen by following along the boundary of the ribbon presentation of $\Sigma_{g,n}$, as shown in Figure \ref{fig1}.
\end{proof}

\begin{remark}
Note that Dehn twists along the boundary curves of $\Sigma$ act trivially on the moduli spaces $X(\Sigma_{g,n},G)$. By \cite[Section 4.4.4]{fm}, in the cases $r=3$ and $r=4$ the embedding $B_r\to \Gamma(\Sigma_{g,n})$ of the type given above $B_r$ isomorphically onto the pure mapping class group of the surface of genus $g$ with $n$ punctures. Hence, the braid group actions on the spaces $X(\Sigma_{1,1},G)$ and $X(\Sigma_{1,2},G)$ coincide with the pure mapping class group action.
\end{remark}

\subsection{Proof of Theorem \ref{mainthm}}\label{sect:4.3}
We now complete the proof Theorem \ref{mainthm}. We have seen from Propositions \ref{braidprop} and \ref{cd} that there is a $B_r$-equivariant isomorphism
$$X(\Sigma_{g,n},G)\simeq C(r,G)\simeq B(r,G)$$
that is compatible with invariants (boundary monodromy and Coxeter invariants). Combining this with the sporadic isomorphism established in Proposition \ref{sporadic} gives us the desired result.

\begin{comment}
\begin{theorem}
\label{theorem1}
Let $r\geq3$ be an integer. Write $r=2g+n$ where $n\in\{1,2\}$. We have $B_r$-equivariant isomorphisms of complex affine varieties
\begin{enumerate}
    \item $A'_P(r,1)\simeq X_k(\Sigma_{g,n},\mu_2)$,
    \item $A_P(r,2)\simeq X_k(\Sigma_{g,n},\G_m)$, and
    \item $A_P(r,4)\simeq X_k(\Sigma_{g,n},\SL_2)$,
\end{enumerate}
where the Coxeter invariant $P$ determines the boundary monodromy $k$, and vice versa.
\end{theorem}
\begin{corollary}
Let $r=2g+n\geq3$ be a positive integer with $n\in\{1,2\}$. Then the natural $B_r$-actions on
\[
A'_P(r,1),\ A_P(r,2),\ A_P(r,4)
\]
extend to an action of the pure mapping class group of $\Sigma_{g,n}$.
\end{corollary}
\end{comment}

\section{Stokes matrices}\label{sect:5}

In this section, we establish isomorphisms between moduli spaces of points on spheres and spaces of Stokes matrices. Together with Theorem~\ref{mainthm}, this gives a canonical embedding of the moduli of $\SL_2$-local systems on surfaces with two boundary components into the space of Stokes matrices. We also show the compatibility of Coxeter invariants, integral structures, and Poisson structures of this embedding.

\subsection{Stokes matrices and moduli of points on spheres}
\label{sect:5.1}
We begin with a definition.

\begin{definition}
A \emph{Stokes matrix} of rank $r$ is an $r\times r$ unipotent upper triangular matrix. We denote by $V(r)$ the affine space Stokes matrices of rank $r$.
\end{definition}

It is well-known that there is a braid group action on the space of Stokes matrices. Let $s\in V(r)$ and let $\sigma_1,\ldots,\sigma_{r-1}$ be the standard generators of $B_r$. The $B_r$-action on $V(r)$ is given by
\[
\sigma_is=
\begin{bmatrix}
\I_{i-1} & & & \\
& s_{i,i+1} & -1 & \\
& 1 & 0 & \\
& & & \I_{r-i-1}
\end{bmatrix}
\cdot s\cdot
\begin{bmatrix}
\I_{i-1} & & & \\
& s_{i,i+1} & 1 & \\
& -1 & 0 & \\
& & & \I_{r-i-1}
\end{bmatrix}.
\]
Note that the braid group action given above is slightly different from the one in \cite{Bondal04,Dubrovin}. It is clear that the characteristic polynomial of $-s^{-1}s^T$,
$$p(\lambda)=\det(\lambda+s^{-1}s^T),$$
is invariant under the $B_r$-action and satisfies $p(\lambda)=\lambda^rp(1/\lambda)$. The characteristic polynomial of $-s^{-1}s^T$ has been of great importance in the study of Stokes matrices. For instance, it is related to the monodromy data at infinity of certain Fuchsian systems \cite{Dubrovin,Ugaglia}; it also is related to the Serre functors of triangulated categories admitting full exceptional collections (see Section~\ref{sect:7}); finally, the eigenvalues of $-s^{-1}s^T$ are the Casimir functions of a natural Poisson bracket on the space of Stokes matrices \cite[Theorem~3.2]{Ugaglia}.

We relate the spaces of Stokes matrices with the moduli of points on spheres. We begin with introducing some notations. Let $\Sym(r)$ denote the affine space of $r\times r$ symmetric matrices. It is clear that $V(r)$ can be identified with the closed subscheme of $\Sym(r)$ consisting of symmetric matrices with $1$'s on the diagonal, via
\[
V(r)\hookrightarrow\Sym(r),\ \ \ s\mapsto\frac12\Big(s+s^T\Big).
\]
Let $\Sym(r,m)$ denote the closed subscheme of $\Sym(r)$ consisting of symmetric matrices of rank $\leq m$, and let $V(r,m)\subset V(r)$ be the preimage of $\Sym(r,m)$ under the embedding $V(r)\hookrightarrow\Sym(r)$. It is not hard to check that the closed subscheme $V(r,m)\subset V(r)$ is invariant under the $B_r$-action on $V(r)$.

\begin{proposition}
$(\A^m)^r\git\Ort(m)\simeq\Sym(r,m)$.
\end{proposition}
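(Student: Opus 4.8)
The plan is to exhibit the isomorphism through the classical first fundamental theorem of invariant theory for the orthogonal group. First I would note that the diagonal action of $\Ort(m)$ on $(\A^m)^r$ is the action on $r$-tuples of vectors $(v_1,\dots,v_r)$ in the quadratic space $(\A^m, q_m)$, where $q_m = x_1^2+\cdots+x_m^2$. The ring of invariants $F[(\A^m)^r]^{\Ort(m)}$ is generated by the pairwise inner products $\langle v_i, v_j\rangle$ for $1\le i\le j\le r$, by the FFT for $\Ort(m)$ over a field of characteristic zero. Assembling these invariants into a matrix defines a morphism
\[
\Phi\colon (\A^m)^r \to \Sym(r), \qquad (v_1,\dots,v_r)\mapsto \big(\langle v_i,v_j\rangle\big)_{i,j},
\]
which is $\Ort(m)$-invariant and hence factors through the GIT quotient $(\A^m)^r\git\Ort(m)$. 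The induced map $\bar\Phi\colon (\A^m)^r\git\Ort(m)\to\Sym(r)$ is a closed immersion precisely because the invariants $\langle v_i,v_j\rangle$ generate the invariant ring (surjectivity of the coordinate ring map) and separate $\Ort(m)$-orbit closures.

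Next I would identify the scheme-theoretic image of $\bar\Phi$ with $\Sym(r,m)$. One containment is immediate: if $N=(\langle v_i,v_j\rangle)$ is the Gram matrix of $r$ vectors spanning a subspace of the $m$-dimensional space $V$, then $N$ factors as $G^T G$ (up to the form) through an $m$-dimensional space, so $\rank N\le m$, i.e.\ the image lands in $\Sym(r,m)$. For the reverse containment I would use that every symmetric bilinear form of rank $\le m$ on $F^r$ arises as the pullback of the standard form $q_m$ along some linear map $F^r\to F^m$: concretely, a rank-$\le m$ symmetric matrix $N$ over an algebraically closed (or at least quadratically closed) field of characteristic zero can be written as $N = B^T B$ for some $m\times r$ matrix $B$ (diagonalize $N$ over $F$, take square roots of the eigenvalues), and the columns of $B$ give a point of $(\A^m)^r$ mapping to $N$. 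This shows $\bar\Phi$ surjects onto $\Sym(r,m)$ on points. To upgrade this to an isomorphism of schemes I would check that the coordinate ring of $\Sym(r,m)$ — the polynomial ring on the entries $n_{ij}$ modulo the ideal generated by the $(m+1)\times(m+1)$ minors — maps isomorphically onto $F[(\A^m)^r]^{\Ort(m)}$; equivalently, that the ideal of relations among the invariants $\langle v_i,v_j\rangle$ is exactly the determinantal ideal. This is the second fundamental theorem of invariant theory for $\Ort(m)$ (the only relations among the inner products are that the Gram matrix has rank $\le m$), which holds in characteristic zero.

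The main obstacle I anticipate is the scheme-theoretic (as opposed to set-theoretic) identification: one must know not merely that the image consists of rank $\le m$ symmetric matrices, but that $\Sym(r,m)$ is \emph{reduced} and that no further polynomial relations hold — i.e.\ invoking the SFT for $\Ort(m)$ correctly, and knowing that the determinantal scheme cut out by the $(m+1)$-minors is already reduced (a classical fact about generic symmetric determinantal varieties in characteristic zero). A clean way to sidestep delicate scheme structure, if the paper is content with that level of rigor, is: (i) both sides are varieties (reduced), (ii) $\bar\Phi$ is injective on closed points since inner products separate $\Ort(m)$-orbits of semisimple — here, arbitrary, since the action is on an affine space and closed orbits are those of tuples whose span meets its orthogonal trivially — configurations, (iii) $\bar\Phi$ is surjective onto $\Sym(r,m)$ by the square-root argument above, and (iv) $\bar\Phi$ is a closed immersion onto its image because it is the map on GIT quotients induced by the surjection of coordinate rings $F[\Sym(r)]\twoheadrightarrow F[(\A^m)^r]^{\Ort(m)}$ given by the FFT. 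Steps (ii)–(iv) together give that $\bar\Phi$ is an isomorphism onto the reduced closed subscheme $\Sym(r,m)$. I would present the argument in that order: set up $\Phi$, invoke the FFT to get a closed immersion onto the image, compute the image set-theoretically via the square-root factorization, and conclude equality with $\Sym(r,m)$.
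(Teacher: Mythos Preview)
Your proposal is correct and follows essentially the same route as the paper: invoke the First Fundamental Theorem for $\Ort(m)$ to obtain a closed immersion $(\A^m)^r\git\Ort(m)\hookrightarrow\Sym(r)$ via the Gram-matrix map, then identify the image with $\Sym(r,m)$ by writing any rank-$\le m$ symmetric matrix as $B^TB$ via diagonalization and square roots (the paper phrases this as $A=PDP^T$). You are more explicit than the paper about the scheme-theoretic step---mentioning the Second Fundamental Theorem and reducedness of the determinantal variety---whereas the paper is content to argue at the level of points once the closed immersion is in hand; but the substance is the same.
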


\begin{proof}
By the First Fundamental Theorem of Invariant theory for the orthogonal group (cf. \cite[Chapter 11 \textsection2.1]{Procesi} and \cite[Chapter 2 \textsection9]{Weyl}), any $\Ort(m)$-invariant polynomial function on $(\A^m)^r$ is a polynomial in the following functions
\[
\Phi_{ij}\colon(\A^m)^r\rightarrow \A^1, \ \ (v_1,\ldots,v_r)\mapsto\left<v_i,v_j\right>
\]
for $1\leq i,j\leq r$. Hence the quadratic map
\[
\Phi\colon(\A^m)^r\rightarrow\Sym(r), \ \ (v_1,\ldots,v_r)\mapsto(\left<v_i,v_j\right>)_{ij}
\]
descends to an embedding
\[
\widetilde\Phi\colon(\A^m)^r\git\Ort(m)\rightarrow\Sym(r).
\]
Thus it suffices to show that the image of $\Phi$ is $\Sym(r,m)\subset\Sym(r)$.
This is equivalent the following statement: for any $r\times r$ symmetric matrix $A$ of rank at most $m$, there exists $v_1,\ldots,v_r\in\A^m$ such that
\[
\begin{bmatrix}
- & v_1 & - \\
- & v_2 & - \\
 & \cdots &  \\
 - & v_r & - 
\end{bmatrix}
\begin{bmatrix}
\textemdash & \textemdash & & \textemdash\\
v_1&v_2&\cdots&v_r\\
\textemdash & \textemdash & & \textemdash
\end{bmatrix}
=A.
\]
This follows straightforwardly from the fact that for any symmetric matrix $A$, there exists an invertible matrix $P\in\GL_r(F)$ and a diagonal matrix $D$ such that $A=PDP^T$, if $\mathrm{char}(F)\neq2$.
\end{proof}

%The corollary below proves the first part of Proposition \ref{pointstokes}, the remaining part being proved in Proposition \ref{proposition:coxeterStokes} below.

\begin{corollary}
\label{coro;A'andStokes}
Restricting the isomorphism $(\A^m)^r\git\Ort(m)\simeq\Sym(r,m)$ to the closed subscheme $V(r,m)\subset\Sym(r,m)$, one obtains a $B_r$-equivariant isomorphism
$$A'(r,m)= S(m)^r\git\Ort(m)\xrightarrow{\simeq}V(r,m)$$
given by
\[
[(v_1,\ldots,v_r)]\mapsto
\begin{bmatrix}
1 & 2\left<v_1,v_2\right> & \cdots & 2\left<v_1,v_r\right> \\
0 & 1 & \cdots & 2\left<v_2,v_r\right> \\
\vdots & \ddots & \ddots & \vdots\\
0 & \cdots & 0 & 1
\end{bmatrix}.
\]
\end{corollary}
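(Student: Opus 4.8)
The plan is to deduce the statement from the preceding Proposition by restricting the isomorphism $\widetilde\Phi\colon(\A^m)^r\git\Ort(m)\xrightarrow{\sim}\Sym(r,m)$ to the appropriate closed subschemes on each side, and then to check $B_r$-equivariance by a direct matrix computation.

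First I would observe that $S(m)^r$ is the closed $\Ort(m)$-stable subscheme of $(\A^m)^r$ cut out by the $\Ort(m)$-invariant functions $q(v_i)-1=\Phi_{ii}-1$, $i=1,\dots,r$. Since $\Ort(m)$ is reductive in characteristic zero, the Reynolds operator gives, for any ideal generated by invariant elements $f_1,\dots,f_k$, the equality $(f_1,\dots,f_k)\cap F[(\A^m)^r]^{\Ort(m)}=(f_1,\dots,f_k)\,F[(\A^m)^r]^{\Ort(m)}$; applying this to the presentation $F[S(m)^r]=F[(\A^m)^r]/(q(v_i)-1)$ and taking $\Ort(m)$-invariants yields $F[S(m)^r]^{\Ort(m)}=F[(\A^m)^r]^{\Ort(m)}/(\Phi_{ii}-1)$. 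Thus $\widetilde\Phi$ restricts to an isomorphism $A'(r,m)=S(m)^r\git\Ort(m)\xrightarrow{\sim}\{A\in\Sym(r,m):A_{ii}=1\text{ for all }i\}$. Under the embedding $V(r)\hookrightarrow\Sym(r)$, $s\mapsto\frac12(s+s^T)$, whose image is exactly the symmetric matrices with unit diagonal, this target is precisely $V(r,m)$. Chasing the maps, the class $[(v_1,\dots,v_r)]$ goes to the Gram matrix $(\langle v_i,v_j\rangle)_{ij}$, whose preimage under $s\mapsto\frac12(s+s^T)$ is the unipotent upper triangular matrix with $(i,j)$-entry $2\langle v_i,v_j\rangle$ for $i<j$ (since for $i<j$ the lower entry $s_{ji}$ vanishes), which is exactly the displayed formula.

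It remains to verify $B_r$-equivariance, which I expect to be the main computational point, though it becomes transparent after one observation: the two conjugating matrices in the displayed braid action on $V(r)$ are transposes of one another. Writing $T_i$ for the left factor, with nonzero $2\times2$ block $\bigl(\begin{smallmatrix}s_{i,i+1}&-1\\1&0\end{smallmatrix}\bigr)$, the right factor has block $\bigl(\begin{smallmatrix}s_{i,i+1}&1\\-1&0\end{smallmatrix}\bigr)=T_i^T$. Hence $\sigma_i s=T_i s T_i^T$, so on $A=\frac12(s+s^T)$ the induced move is $A\mapsto T_i A T_i^T$, where now $s_{i,i+1}=2A_{i,i+1}$. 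On the sphere side, letting $B$ be the $r\times m$ matrix with $k$-th row $v_k$, so $A=BB^T$, the braid move $\sigma_i$ of Proposition \ref{braidsquad} replaces the rows of $B$ by the rows of $M_iB$, where $M_i$ is the identity outside the $\{i,i+1\}$-block and equals $\bigl(\begin{smallmatrix}2\langle v_i,v_{i+1}\rangle&-1\\1&0\end{smallmatrix}\bigr)$ on it (the first new row is $2\langle v_i,v_{i+1}\rangle v_i-v_{i+1}=s_{v_i}(v_{i+1})$, the second is $v_i$). Since $2\langle v_i,v_{i+1}\rangle$ is the entry $s_{i,i+1}$ of the Stokes matrix attached to $B$, one has $M_i=T_i$ for $A=BB^T$, so the new Gram matrix $M_iBB^TM_i^T=T_iAT_i^T$ matches the action above. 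This gives equivariance on representatives, hence on $A'(r,m)$.

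The main obstacle is thus mostly bookkeeping: ensuring the restriction step is correct scheme-theoretically (the Reynolds argument), and keeping track of indexing conventions and the factor of $2$ relating $s_{i,i+1}$ to $\langle v_i,v_{i+1}\rangle$ in the equivariance check. No idea beyond the Reynolds-operator descent and the identity $T_i^{\prime}=T_i^{T}$ should be required.
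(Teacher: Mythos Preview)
Your proposal is correct and follows essentially the same approach as the paper: restrict the isomorphism from the preceding Proposition, then verify $B_r$-equivariance by direct computation. Your Reynolds-operator argument makes rigorous the restriction step that the paper simply asserts, and your observation that the right conjugating factor equals $T_i^T$ (so that $\sigma_i$ acts on the symmetrized matrix by $A\mapsto T_iAT_i^T$, matching the row operation $B\mapsto M_iB$ on the sphere side) is a clean way to organize what the paper leaves as ``can be verified straightforwardly.''
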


\begin{proof}
The isomorphism $A'(r,m)\simeq V(r,m)$ follows from the previous proposition. Recall that the natural action of $B_r$ on $A'(r,m)=S(m)^r\git\Ort(m)$ is determined by the moves
\begin{align*}
\sigma_i(v_1,\ldots,v_r)&=(v_1,\ldots,v_{i-1},s_{v_i}(v_{i+1}),v_i,v_{i+2},\ldots,v_r)\\
&=(v_1,\ldots,v_{i-1},2\left<v_i,v_{i+1}\right>v_i-v_{i+1},v_i,v_{i+2},\ldots,v_r).
\end{align*}
It can be verified straightforwardly that this action is compatible with the action of $B_r$ on $V(r,m)$ via the isomorphism $S(m)^r\git\Ort(m)\simeq V(r,m)$ described explicitly above.
\end{proof}

Hence we have the following diagram:
\[
\begin{tikzcd}
S(1)^r \arrow[r,hook] \arrow[d] &
S(2)^r \arrow[r,hook] \arrow[d] &
\cdots \arrow[r,hook] &
S(r)^r \arrow[r,hook] \arrow[d] &
S(r+1)^r \arrow[r,hook] \arrow[d] &
\cdots\\
A(r,1) \arrow[r,hook] \arrow[d,"\deg 2"] &
A(r,2) \arrow[r,hook] \arrow[d,"\deg 2"] &
\cdots \arrow[r,hook] &
A(r,r) \arrow[r,"\deg 2"] \arrow[d,"\deg 2"] &
A(r,r+1) \arrow[r,"\simeq"] \arrow[d,"\simeq"] &
\cdots \\
A'(r,1) \arrow[r,hook] \arrow[d,"\simeq"] &
A'(r,2) \arrow[r,hook] \arrow[d,"\simeq"] &
\cdots \arrow[r,hook] &
A'(r,r) \arrow[r,"\simeq"] \arrow[d,"\simeq"] &
A'(r,r+1) \arrow[r,"\simeq"] \arrow[d,"\simeq"] &
\cdots \\
V(r,1) \arrow[r,hook] &
V(r,2) \arrow[r,hook] &
\cdots \arrow[r,hook] &
V(r) \arrow[r,equal] &
V(r) \arrow[r,equal] &
\cdots
\end{tikzcd}
\]

We define the Coxeter invariant on $V(r,m)$ by pulling back the Coxeter invariant on $A'(r,m)$ via the isomorphism $V(r,m)\simeq A'(r,m)$.

\begin{definition}
The \emph{Coxeter invariant} on $V(r,m)$ is defined to be the composition
\[
c\colon V(r,m)\xrightarrow{\sim}A'(r,m)\xrightarrow{c}\Pin(m)\git\Ort(m).
\]
It is a $B_r$-invariant morphism by Propositions~\ref{sphere-coxeter} and Corollary \ref{coro;A'andStokes}. We shall denote $V_P(r,m)=c^{-1}(P)$ for each $P\in\Pin(m)\git\Ort(m)$. Similarly, for each class $p\in\Ort(m)\git\Ort(m)$ we shall denote by $V_p(r,m)$ the subvariety of $V(r,m)$ consisting of Stokes matrices whose Coxeter invariant has class $p$ in $\Ort(m)\git\Ort(m)$.
\end{definition}

The next proposition shows that the Coxeter invariant of a rank $r$ Stokes matrix $s\in V(r)=V(r,r)$ gives a refinement of the characteristic polynomial of $-s^{-1}s^T$. Let $\textup{Poly}_r$ denote the space of polynomials of degree $r$.

\begin{comment}
Let us denote by $\textup{RecPoly}_r$ the space of monic reciprocal polynomials of degree $r$. There is an isomorphism
\[
\Ort(r)\git\Ort(r)\simeq\textup{RecPoly}_r
\]
given by taking the class $[g]$ of $g\in\Ort(r)$ to its characteristic polynomial.
\end{comment}
%The following proposition proves the second part of Proposition \ref{pointstokes}.

\begin{proposition}
\label{proposition:coxeterStokes}
Let $f\colon\Ort(r)\git\Ort(r)\to\mathrm{Poly}_r$ be the map sending $[g]\in\Ort(r)\git\Ort(r)$ to the characteristic polynomial of $g$. Then the composition
\[
V(r)\xrightarrow{\sim}A'(r,r)\xrightarrow{c}\Pin(r)\git\Ort(r)\xrightarrow{\pi}\Ort(r)\git\Ort(r)\xrightarrow{f}\textup{Poly}_r
\]
takes a Stokes matrix $s\in V(r)$ to the characteristic polynomial of $-s^{-1}s^T$.
Here $\pi\colon\Pin(r)\rightarrow\Ort(r)$ is the surjection introduced in Section~\ref{sect:2.clifford}.
\end{proposition}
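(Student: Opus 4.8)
The strategy is to unwind all the definitions until the asserted equality of polynomials becomes an explicit matrix identity, and then to recognize that identity as the classical coincidence of a product of reflections with the Coxeter transformation of the associated bilinear form.

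First I would trace the maps. Fix a Stokes matrix $s\in V(r)$ and set $M=\tfrac12(s+s^T)$, a symmetric $r\times r$ matrix with all diagonal entries equal to $1$. By Corollary~\ref{coro;A'andStokes} (using the isomorphism $A'(r,r)\simeq V(r)$ there), $s$ is the image of the class $[(v_1,\dots,v_r)]\in A'(r,r)$ of \emph{any} tuple $(v_1,\dots,v_r)\in S(r)^r$ whose Gram matrix $(\langle v_i,v_j\rangle)_{ij}$ equals $M$; such a tuple exists because any symmetric matrix over a field of characteristic $\neq2$ is congruent to a diagonal one and hence is a Gram matrix of $r$ vectors in $\A^r$, here lying on $S(r)$ since the diagonal of $M$ is $1$. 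Applying $c$ and then $\pi$, and using the Remark following Definition~\ref{sphere:coxeter} (which rests on Proposition~\ref{basic} and applies verbatim in the unoriented setting), the image of $[(v_1,\dots,v_r)]$ in $\Ort(r)\git\Ort(r)$ is the conjugacy class of the linear automorphism $(-1)^r\,s_{v_1}\circ\cdots\circ s_{v_r}$ of $\A^r$. Hence after applying $f$ the composition in the statement sends $s$ to the characteristic polynomial of $(-1)^r\,s_{v_1}\circ\cdots\circ s_{v_r}$, and what must be shown is that this equals $\det(\lambda+s^{-1}s^T)$.

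Both sides are morphisms from $V(r)\cong\A^{\binom r2}$ to $\textup{Poly}_r$ (the entries of $s^{-1}$ are polynomial in those of $s$ since $s-\mathrm{Id}$ is nilpotent), so by irreducibility of $V(r)$ it suffices to prove the identity on the dense open locus $\{\det M\neq0\}$, which is nonempty as it contains $s=\mathrm{Id}$. On this locus the $v_i$ form a basis of $\A^r$, and in that basis $s_{v_i}(v_j)=2M_{ij}v_i-v_j$, so the matrix of $s_{v_i}$ is $2e_ie_i^TM-\mathrm{Id}=e_ie_i^TC-\mathrm{Id}=-\Sigma_i$, where $C:=2M=s+s^T$ and $\Sigma_i:=\mathrm{Id}-e_ie_i^TC$ is the usual reflection attached to the Cartan-type matrix $C$. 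Since characteristic polynomials are invariant under conjugation and change of basis, $(-1)^r\,s_{v_1}\circ\cdots\circ s_{v_r}$ has the same characteristic polynomial as $(-1)^r\prod_{i=1}^r(-\Sigma_i)=\Sigma_1\Sigma_2\cdots\Sigma_r$. It therefore remains to establish $\Sigma_1\Sigma_2\cdots\Sigma_r=-s^{-1}s^T$, equivalently $s\,\Sigma_1\cdots\Sigma_r=-s^T$.

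This is the classical identity expressing the product of reflections $\Sigma_1\cdots\Sigma_r$ as the Coxeter transformation $-s^{-1}s^T$ of the (non-symmetric) bilinear form on $\A^r$ with matrix $s$, whose symmetrization is $C$; one can simply cite it, or prove it by a short induction on $k$ showing that $s\,\Sigma_1\cdots\Sigma_k$ is the matrix obtained from $s$ by replacing, for each $j\le k$, its $j$-th row by the negative of the $j$-th row of $s^T$, while leaving rows $k+1,\dots,r$ equal to those of $s$. The inductive step uses only that $s$ is upper unitriangular: right multiplication by $\Sigma_k=\mathrm{Id}-e_ke_k^TC$ alters only the $k$-th row of the current matrix (the $(j,k)$ entries of the already modified rows $j<k$ vanish because those rows are supported in positions $\leq j$, and the $(j,k)$ entry of $s$ vanishes for $j>k$), and that row changes from $e_k^Ts$ to $e_k^Ts-e_k^TC=-e_k^Ts^T$. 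Taking $k=r$ yields $s\,\Sigma_1\cdots\Sigma_r=-s^T$, completing the proof. There is no serious obstacle here: the only point requiring care is the triangular bookkeeping in the last step, together with the (harmless) density reduction needed to legitimize passing from $s$ to a representing basis $(v_i)$; all the rest is a direct unwinding of the constructions of Sections~\ref{sect:2} and~\ref{sect:5}.
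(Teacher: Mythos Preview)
Your argument is correct and follows essentially the same route as the paper: reduce to the dense open locus where $v_1,\dots,v_r$ form a basis, write each $s_{v_i}$ in that basis, and invoke the Coxeter identity $s\,s_{v_1}\cdots s_{v_r}=(-1)^{r+1}s^T$ (equivalently $s\,\Sigma_1\cdots\Sigma_r=-s^T$) to conclude. The only difference is that the paper cites Bourbaki for this identity, whereas you supply the short inductive proof; your triangular bookkeeping is correct.
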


\begin{proof}
It suffices to prove the statement on a dense open subset of $V(r)$. We consider the subset $V(r)\backslash V(r,r-1)\subset V(r)$ consisting of Stokes matrices $s$ such that $\rank(s+s^T)=r$.
Observe that $s\in V(r)\backslash V(r,r-1)$ if and only if its image $[(v_1,\ldots,v_r)]\in A'(r,r)$ under the isomorphism $V(r)\simeq A'(r,r)$ satisfy the property that $\{v_1,\ldots,v_r\}\subset S(r)\subset\A^r$ is linearly independent.
With respect to the basis $\{v_1,\ldots,v_r\}$, the linear transformation $s_{v_i}\in\Ort(r)$ can be expressed as
\[
s_{v_i}=
\begin{bmatrix}
-1 & 0 & \cdots & 0 & \cdots & 0\\
0 & -1 & \cdots & 0 & \cdots & 0\\
\vdots & \vdots & \ddots & \vdots & \cdots & \vdots \\
2\left<v_1,v_i\right> & 2\left<v_2,v_i\right> & \cdots & 1 & \cdots & 2\left<v_n,v_i\right> \\
\vdots & \vdots & \cdots & \vdots & \ddots & \vdots \\
0 & 0 & \cdots & 0 & \cdots & -1
\end{bmatrix}.
\]
It is then easy to check the following Coxeter identity \cite{Bourbaki} holds
\[
s\cdot s_{v_1}\cdot\cdots\cdot s_{v_r}=(-1)^{r+1}s^T,
\]
where
\[
s=
\begin{bmatrix}
1 & 2\left<v_1,v_2\right> & \cdots & 2\left<v_1,v_n\right> \\
0 & 1 & \cdots & 2\left<v_2,v_n\right> \\
\vdots & \ddots & \ddots & \vdots\\
0 & \cdots & 0 & 1
\end{bmatrix}.
\]
The proposition then follows from the fact that the image of $[(v_1,\ldots,v_r)]\in A'(r,r)$ under the composition $$A'(r,r)\xrightarrow{c}\Pin(r)\git\Ort(r)\xrightarrow{\pi}\Ort(r)\git\Ort(r)$$
is given by $[(-1)^rs_{v_1}\circ\cdots\circ s_{v_r}]$.
\end{proof}

\begin{remark}
Given a Stokes matrix $s\in V(r)$, its Coxeter invariant $c(s)\in\Pin(r)\git\Ort(r)$ carries more information than the characteristic polynomial of $-s^{-1}s^T$ in general, as we shall see in Example \ref{r4example} below.
\end{remark}

\begin{example}
Let $s$ be a $3\times3$ Stokes matrix
$$s=\begin{bmatrix}1 & x & z\\ 0 & 1 & y\\ 0 & 0 & 1\end{bmatrix}.$$
The characteristic polynomial of its associated Coxeter element $-s^{-1}s^T$ is given by
\[
p(\lambda)=(\lambda+1)(\lambda^2-k\lambda+1),
\]
where $k=x^2+y^2+z^2-xyz-2$.
\end{example}

\begin{example}\label{r4example}
Let $s$ be a $4\times 4$ Stokes matrix
$$s=\begin{bmatrix}1 & a & e & d\\ 0 & 1 & b & f\\ 0 & 0 & 1 & c\\ 0 & 0 & 0 & 1\end{bmatrix}.$$
The characteristic polynomial of its associated Coxeter element $-s^{-1}s^T$ is given by
$$p_{k_1,k_2}(\lambda)=\lambda^4-k_1k_2\lambda^3+(k_1^2+k_2^2-2)\lambda^2-k_1k_2\lambda+1$$
where we have
\begin{align*}
k_1+k_2&=ac+bd-ef\\
k_1k_2&=a^2+b^2+c^2+d^2+e^2+f^2-abe-adf-bcf-cde+abcd-4.
\end{align*}
Note that the assignment $(k_1,k_2)\mapsto p_{k_1,k_2}$ is not injective. Indeed, we have
\[
p_{k_2,k_1}=p_{k_1,k_2} \text{ \ and \ } p_{-k_1,-k_2}=p_{k_1,k_2},
\]
which reflects the fact that the morphisms
\[
\Spin(4)\git\SO(4)\to\Spin(4)\git\Ort(4)  \text{ \ and \ } \Spin(4)\git\Ort(4)\to\SO(4)\git\Ort(4)
\]
respectively are generically finite of degree $2$.
This shows that the Coxeter invariant $c(s)\in\Pin(4)\git\Ort(4)$ carries more information than the characteristic polynomial $p$.
\end{example}

\begin{remark}
As we shall see in Section~\ref{sect:6}, the expressions for $k$ and $(k_1,k_2)$ in the above examples give equations for the moduli spaces of $\SL_2$-local systems with fixed boundatry traces on a one-holed torus and a two-holed torus, respectively. This relationship will be generalized in the next subsection.
\end{remark}

\subsection{Stokes matrices and character varieties}

We now establish the connection between the $\SL_2$-character varieties and the space of Stokes matrices, through the isomorphism $A'(r,m)\simeq V(r,m)$ in Corollary~\ref{coro;A'andStokes} and $A_P(r,4)\simeq X_k(\Sigma_{g,n},\SL_2)$ in Theorem~\ref{mainthm}.

First, we need to compare the invariant subvarieties $A_P(r,4)\subset A(r,4)$ and $A'_{P'}(r,4)\subset A'(r,4)$. Here $P\in\Pin(4)\git\SO(4)$ and $P'$ denotes its class in $\Pin(4)\git\Ort(4)$.
Recall that in the proof of Proposition~\ref{sporadic}, we have
$$\Pin(4)\git\Ort(4)=(\Pin(4)\git\SO(4))\git\{1,\iota\}=(W(\SL_2)^2\sqcup W(\SL_2))/\{1,\iota\}$$
where element $\iota$ acts on $\Pin^0(4)\git\SO(4)=W(\SL_2)^2$ by interchanging the components of $W(\SL_2)^2$ and acts on $\Pin^1(4)\git\SO(4)=W(\SL_2)$ trivially. It follows that $\Pin^0(4)\git\Ort(4)$ parametrizes unordered pairs of (not necessarily distinct) elements in $W(\SL_2)$. This shows the following.

\begin{proposition}
\label{prop:compareAA'}
Let $r\geq1$, and let $r_0\in\{0,1\}$ denote its remainder modulo $2$. Given $P\in\Pin^{r_0}(4)\git\SO(4)$, we have the following.
\begin{itemize}
    \item If $r= 3$, then the action of $\mu_2$ on $A(r,4)$ is trivial $A_P(3,4)\simeq A'_{P'}(3,4)$.
    \item If $r\geq 5$ is odd, then the involutive action of $\mu_2$ on $A(r,4)$ preserves $A_P(r,4)$, and the projection $A(r,4)\to A'(r,4)$ induces an isomorphism
    $$A_P(r,4)\git\mu_2\simeq A'_{P'}(r,4).$$
    \item If $r$ is even, then the involution in $\mu_2$ provides an isomorphism of $A_P(r,4)$ with $A_{\iota P}(r,4)$. The preimage of $A'_{P'}(r,4)$ along the projection $A(r,4)\to A'(r,4)$ is $A_P(r,4)\sqcup A_{\iota P}(r,4)$, and each component maps isomorphically onto $A'_{P'}(r,4)$.
\end{itemize}
\end{proposition}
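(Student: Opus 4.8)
The plan is to deduce the statement formally from two ingredients: the $\Ort(4)$-equivariance of the Coxeter invariant (Proposition~\ref{sphere-coxeter}), which upon passing to $\mu_2$-quotients yields a commutative square relating the quotient $\pi\colon A(r,4)\to A'(r,4)$, the Coxeter maps $c$ and $c'$, and the finite quotient $\bar\pi\colon\Pin(4)\git\SO(4)\to\Pin(4)\git\Ort(4)$; and the description, recalled just above, of the residual $\{1,\iota\}$-action on $\Pin(4)\git\SO(4)\simeq W(\SL_2)^2\sqcup W(\SL_2)$ (swap of factors on $\Pin^0(4)\git\SO(4)$, trivial on $\Pin^1(4)\git\SO(4)$). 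I would first note that $c$ maps $A(r,4)$ into $\Pin^{r_0}(4)\git\SO(4)$, since $c[u_1,\dots,u_r]=[u_1\otimes\cdots\otimes u_r]$ is a product of $r$ elements of $S(4)\subset\Pin^1(4)$ and $\Cl(4)$ is $\Z/2\Z$-graded. I would also isolate the one general fact needed: if a finite group $\Gamma$ acts on an affine variety $Y$ over a field of characteristic zero and $Z\subseteq Y\git\Gamma$ is closed with preimage $W$, then the induced morphism $W\git\Gamma\to Z$ is an isomorphism (immediate from exactness of the Reynolds operator, which gives $(\Ocal(Y)/I\Ocal(Y))^\Gamma=\Ocal(Y)^\Gamma/I$ for $I$ the ideal of $Z$ in $\Ocal(Y)^\Gamma$).

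Granting this, the odd and even ranges are bookkeeping with the square. When $r$ is odd, $c$ lands in $\Pin^1(4)\git\SO(4)$, on which $\iota$ acts trivially, so $\bar\pi$ restricts to an isomorphism onto $\Pin^1(4)\git\Ort(4)$ and $\bar\pi^{-1}(P')=\{P\}$; the square then gives $\pi^{-1}(A'_{P'}(r,4))=c^{-1}(P)=A_P(r,4)$, and in particular $\iota$ preserves $A_P(r,4)$. Applying the general fact with $Z=A'_{P'}(r,4)$ produces $A_P(r,4)\git\mu_2\simeq A'_{P'}(r,4)$. When $r$ is even, $c$ lands in $\Pin^0(4)\git\SO(4)$, on which $\iota$ swaps factors, so $c\circ\iota$ equals $c$ followed by the swap; hence $\iota$ carries $A_P(r,4)=c^{-1}(P)$ isomorphically onto $A_{\iota P}(r,4)=c^{-1}(\iota P)$, and $\bar\pi^{-1}(P')=\{P,\iota P\}$ gives $\pi^{-1}(A'_{P'}(r,4))=A_P(r,4)\sqcup A_{\iota P}(r,4)$, the union being disjoint when $P\neq\iota P$. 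Since $\iota$ interchanges the two components, the $\mu_2$-action on their disjoint union is free, so each component is a section of $W\git\mu_2=A'_{P'}(r,4)$ and maps isomorphically onto it; I would add a one-line treatment of the degenerate subcase $P=\iota P$, where the preimage is $A_P(r,4)$ alone and $A'_{P'}(r,4)=A_P(r,4)\git\mu_2$. (All these isomorphisms are moreover $B_r$-equivariant, since the $\mu_2$- and $B_r$-actions on $A(r,4)$ commute.)

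The only non-formal point is the $r=3$ assertion that $\mu_2$ acts \emph{trivially} on $A(3,4)$. My plan is to transport the action along $A(3,4)\simeq B(3,\SL_2)\simeq C(3,\SL_2)=\SL_2^2\git\SL_2$ (Propositions~\ref{sporadic} and \ref{cd}). The nontrivial coset of $\mu_2=\Ort(4)/\SO(4)$ is represented by a reflection $-s_u$ with $u\in S(4)$; a short computation with the Clifford model of Example~\ref{cliffordex} shows that under the sporadic isomorphism this acts on $B(3,\SL_2)$ by $[a_1,a_2,a_3]\mapsto[a_1^{-1},a_2^{-1},a_3^{-1}]$, and conjugating through the formulas of Proposition~\ref{cd} this becomes the involution $[b_1,b_2]\mapsto[b_1^{-1},b_2^{-1}]$ on $\SL_2^2\git\SL_2$. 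Since $\tr b^{-1}=\tr b$ for $b\in\SL_2$ and $\tr(b_1^{-1}b_2^{-1})=\tr((b_2b_1)^{-1})=\tr(b_1b_2)$, this involution fixes the three functions $\tr b_1$, $\tr b_2$, $\tr b_1b_2$, which generate the coordinate ring of $\SL_2^2\git\SL_2$ by a classical result of Fricke and Vogt; hence it is the identity. (Alternatively one argues directly on $A(3,4)=S(4)^3\git\SO(4)$: for $(v_1,v_2,v_3)$ spanning a nondegenerate $3$-plane $W\subset\A^4$, reflection in the hyperplane orthogonal to a unit vector of the nondegenerate line $W^\perp$ fixes all the $v_i$ and has determinant $-1$, so it represents $\iota$ and fixes $[v_1,v_2,v_3]$; this holds on a dense open subset of the irreducible reduced variety $A(3,4)$, hence everywhere.) With triviality established, the odd-case conclusion $A_P(3,4)\git\mu_2\simeq A'_{P'}(3,4)$ collapses to $A_P(3,4)\simeq A'_{P'}(3,4)$.

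The step I expect to be the main obstacle is precisely this $r=3$ triviality: it is where the special geometry of $\SL_2$ enters (inversion preserves all trace coordinates on the one-holed torus, equivalently the hyperelliptic involution acts trivially on $X(\Sigma_{1,1},\SL_2)$), and one must be a little careful either with the Fricke presentation of the invariant ring or with the density argument on $A(3,4)$. The remainder is a routine, if mildly case-heavy, analysis of the commutative square and the finite-quotient lemma, the only delicate bookkeeping being the distinction $P=\iota P$ versus $P\neq\iota P$ in the even range.
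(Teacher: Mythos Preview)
Your proposal is correct and, at its core, uses exactly the input the paper uses: the description of the residual $\{1,\iota\}$-action on $\Pin(4)\git\SO(4)\simeq W(\SL_2)^2\sqcup W(\SL_2)$ (swap on the even part, trivial on the odd part). In the paper this proposition is not given a separate proof; it is simply asserted as an immediate consequence of that description (``This shows the following''), so your bookkeeping with the commutative square and the finite-quotient lemma is a faithful (and more explicit) rendering of what the paper leaves to the reader.

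Where you genuinely add something is the $r=3$ triviality. The paper states that $\mu_2$ acts trivially on $A(3,4)$ but offers no argument for it; your two approaches (transporting to $\SL_2^2\git\SL_2$ and invoking the Fricke--Vogt generators, or the direct geometric observation that three vectors in $\A^4$ generically lie in a nondegenerate hyperplane whose orthogonal reflection represents $\iota$ and fixes them) are both valid, and the second is pleasantly self-contained. You are also right to flag the degenerate even subcase $P=\iota P$, which the paper's disjoint-union notation glosses over; your one-line treatment there is the appropriate caveat.
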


We can now relate the $\SL_2$-character varieties with the space of Stokes matrices and compare their Coxeter invariants.
%The first part of the proposition below proves Theorem \ref{xtov}.

\begin{proposition}
\label{prop:chekhovmazzocco}
Let $r\geq3$ be an integer, and write $r=2g+n$ with $n\in\{1,2\}$. Let $k\in\A^n$. There is a canonical morphism from the moduli of local systems $X_k(\Sigma_{g,n},\SL_2)$ into the space of Stokes matrices $V(r)$ given by
\[
X_k(\Sigma_{g,n},\SL_2)\simeq A_P(r,4)\rightarrow A'_{P'}(r,4)\simeq V_{P'}(r,4)
\]
where the Coxeter invariant $P\in \Pin(r)\git\SO(r)$ is determined by $k$, and $P'$ is the class of $P$ in $\Pin(r)\git\Ort(r)$. If $r$ is even, the second arrow above is an isomorphism. Moreover, the corresponding class of $P'$ in $\Ort(r)\git\Ort(r)$ has characteristic polynomial $p$ described as follows:
    \begin{enumerate}
        \item If $r$ is odd, we have
            \[
            p(\lambda)=(\lambda^2-k\lambda+1)(\lambda+1)(\lambda-1)^{r-3}.
            \]
        \item If $r$ is even, we have
            \[
            p(\lambda)=\Big(\lambda^4-k_1k_2\lambda^3+(k_1^2+k_2^2-2)\lambda^2-k_1k_2\lambda+1\Big)(\lambda-1)^{r-4}.
            \]
    \end{enumerate}
\end{proposition}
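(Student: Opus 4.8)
The plan is to assemble the morphism from results already established and then reduce the characteristic polynomial statement to a small computation inside $\Ort(4)$.

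\emph{The morphism.} Theorem~\ref{mainthm}(3) gives a $B_r$-equivariant isomorphism $X_k(\Sigma_{g,n},\SL_2)\xrightarrow{\sim}A_P(r,4)$ in which the Coxeter invariant $P$ corresponds to the boundary monodromy $k$; writing $P$ through Proposition~\ref{sporadic}(3) as a point of $\Pin(4)\git\SO(4)=W(\SL_2)^2\sqcup W(\SL_2)$, it is the conjugacy class of $g\in\SL_2$ with $\tr g=k$ when $r$ is odd ($n=1$), and the pair of classes of $g_1,g_2\in\SL_2$ with $\tr g_i=k_i$ when $r$ is even ($n=2$), this identification being exactly what Theorem~\ref{mainthm} together with Corollary~\ref{c-coxeter} records. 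Composing with the projection $A(r,4)\to A'(r,4)$ restricted to fibres lands in $A'_{P'}(r,4)$, which Corollary~\ref{coro;A'andStokes} identifies with $V_{P'}(r,4)\subset V(r)$; this is the asserted morphism, and it is an isomorphism when $r$ is even by the third bullet of Proposition~\ref{prop:compareAA'}. It then remains to compute the characteristic polynomial $p$ of the class of $P'$ in $\Ort(r)\git\Ort(r)$; by Proposition~\ref{proposition:coxeterStokes} this is the characteristic polynomial of $-s^{-1}s^{T}$ for the corresponding Stokes matrix $s$, and by the Coxeter identity used there it is the characteristic polynomial of the orthogonal transformation $M=(-1)^{r}s_{v_1}\circ\cdots\circ s_{v_r}$ of $\A^{r}$, where $[(v_1,\dots,v_r)]$ represents the point.

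\emph{Reduction to $\Ort(4)$.} Realize the point with all $v_i$ in the fixed copy $S(4)=\SL_2\subset S(r)$, which is possible for $r\geq4$. Each $s_{v_i}$ acts by $-\id$ on the orthogonal complement $W^{\perp}$ of $W=\A^4=\Span(v_1,\dots,v_r)$, so $M$ acts by $\id$ on $W^{\perp}$, contributing $(\lambda-1)^{r-4}$ to $p$, and restricts on $W$ to $(-1)^{r}s_{v_1}|_W\circ\cdots\circ s_{v_r}|_W=\pi(c(u))$, where $c(u)=u_1\otimes\cdots\otimes u_r\in\Pin(4)$ is the unreduced Coxeter invariant, $\pi\colon\Pin(4)\to\Ort(4)$ is the map of Section~\ref{sect:2.clifford}, and we use $\pi(j(v))=-s_v$. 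When $r=3$ one instead realizes the $v_i$ in $S(3)\subset S(4)$, so that the characteristic polynomial of $\pi(c(u))$ on $\A^4$ is $(\lambda-1)$ times the degree-$3$ polynomial $p$ (the factor $(\lambda-1)$ coming from the extra coordinate line), and one divides it out at the end. In all cases the problem reduces to the characteristic polynomial of $\pi(c(u))$ acting on $\A^{4}=\Mat_2$.

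\emph{The computation in $\Ort(\Mat_2,\det)$.} In the model $\Cl(4)\simeq M_4$ of Example~\ref{cliffordex}(3), a direct calculation from $\pi(g)(v)=\alpha(g)\otimes v\otimes g^{-1}$ gives $\pi((g_1,g_2))(w)=g_1 w g_2^{-1}$ for $(g_1,g_2)\in\Spin(4)$ and $\pi((d,e)\iota)(w)=-d\,\bar w\,e^{-1}$ for $(d,e)\iota\in\Pin^{1}(4)$. If $r$ is even, $c(u)=(g_1,g_2)$ with $g_1,g_2$ the words appearing in Corollary~\ref{c-coxeter}, so $\pi(c(u))\colon w\mapsto g_1 w g_2^{-1}$ lies in $\SO(4)$, hence has palindromic characteristic polynomial; using $\tr(w\mapsto AwB)=\tr(A)\tr(B)$, $\tr(g^2)=(\tr g)^2-2$ on $\SL_2$, and $\tr\wedge^2 T=\tfrac12\big((\tr T)^2-\tr T^2\big)$, one computes the polynomial to be $\lambda^4-k_1k_2\lambda^3+(k_1^2+k_2^2-2)\lambda^2-k_1k_2\lambda+1$. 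If $r$ is odd, $c(u)=(d,e)\iota\in\Pin^{1}(4)$ with $\tr(de)=k$ under the identification $\Pin^1(4)\git\SO(4)\simeq W(\SL_2)$, $(d,e)\iota\mapsto de$, of Proposition~\ref{conj}; since $\pi(c(u))\in\Ort(4)\setminus\SO(4)$ has determinant $-1$, its eigenvalue multiset is forced to be $\{1,-1,\gamma,\gamma^{-1}\}$, so its characteristic polynomial is $(\lambda^2-1)\big(\lambda^2-\tr(\pi(c(u)))\lambda+1\big)$, and the identity $\tr(w\mapsto X\bar w Y)=-\tr(X\bar Y)$ on $\Mat_2$ gives $\tr(\pi(c(u)))=\tr(de)=k$. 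Multiplying by $(\lambda-1)^{r-4}$, respectively dividing by $(\lambda-1)$ when $r=3$, yields the two stated formulas.

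\emph{Main obstacle.} The polynomial identities are routine; the real work is the bookkeeping—keeping straight the Clifford model $M_4$ and the precise shape of $\pi$ on $\Pin^{0}$ versus $\Pin^{1}$, the two different embeddings ($\A^4\hookrightarrow\A^r$ for $r\geq4$ and $\A^3\hookrightarrow\A^4$ for $r=3$), and the matching of $k$ and $(k_1,k_2)$ with traces of the Coxeter invariant through the chain of isomorphisms of Sections~\ref{sect:3} and~\ref{sect:4}—together with the observation, in the odd case, that membership in $\Ort(4)\setminus\SO(4)$ already fixes the shape of the degree-$4$ factor so that only one trace need be computed.
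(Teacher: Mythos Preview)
Your proof is correct and follows essentially the same architecture as the paper: assemble the morphism from Theorem~\ref{mainthm}, Corollary~\ref{coro;A'andStokes}, and Proposition~\ref{prop:compareAA'}, then reduce the characteristic polynomial computation to a calculation in $\Ort(4)$ by noting that the embedded copy of $\A^4$ carries all the action while the orthogonal complement contributes the factor $(\lambda-1)^{r-4}$.

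The only genuine difference is in how the $\Ort(4)$ computation is finished. The paper writes the composition of reflections explicitly via the core-quandle identity $s_v(u)=vu^{-1}v$, obtains the maps $u\mapsto aub$ and $u\mapsto a\bar u b$, and then proves a separate linear-algebra lemma by writing down the $4\times4$ matrix and factoring its characteristic polynomial directly. You instead stay inside the Clifford model of Example~\ref{cliffordex}(3), identify $\pi(c(u))$ as $w\mapsto g_1wg_2^{-1}$ or $w\mapsto -d\bar we^{-1}$, and compute coefficients by trace identities; in the odd case you exploit the fact that any element of $\Ort(4)\setminus\SO(4)$ has characteristic polynomial $(\lambda^2-1)(\lambda^2-(\text{trace})\lambda+1)$, so that only the trace needs to be determined. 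This is a pleasant shortcut that avoids writing out the $4\times4$ matrix. You also handle the case $r=3$ more carefully than the paper, which literally invokes an embedding $S(4)\hookrightarrow S(r)$ that does not exist when $r=3$; your device of embedding $\A^3\hookrightarrow\A^4$ and then dividing out the extra $(\lambda-1)$ factor makes this step rigorous.
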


\begin{proof}
The first part of the proposition follows from Theorem \ref{mainthm} and Corollary \ref{coro;A'andStokes} as well as the observation that, if $r$ is even, then $A_P(r,4)\rightarrow A'_{P'}(r,4)$ is an isomorphism by Proposition~\ref{prop:compareAA'}.

Now we relate the Coxeter invariants on both sides.
Suppose $[(v_1,\ldots,v_r)]\in A(r,4)$ and $\rho\in X(\Sigma_{g,n},\SL_2)$ are identified through the isomorphism established in Theorem~\ref{mainthm}. By Propositions~\ref{cd} and \ref{braidprop}, the Coxeter invariants of $\rho$, i.e.~the monodromy along the boundary curve(s), are described as follows.
\begin{itemize}
    \item When $r$ is odd, the monodromy of $\rho$ along the boundary curve is given by
    \[
    v_1v_2^{-1}\cdots v_rv_1^{-1}v_2\cdots v_r^{-1}\in\SL_2.
    \]
    \item When $r$ is even, the monodromy of $\rho$ along the two boundary curves are given by
    \[
    v_1v_2^{-1}\cdots v_r^{-1}\text{ \ and \ }v_1^{-1}v_2v_3^{-1}\cdots v_{r-1}^{-1}v_r\in\SL_2.
    \]
\end{itemize}

By Proposition~\ref{proposition:coxeterStokes}, the image of $[(v_1,\ldots,v_r)]\in A(r,4)$ under the composition $A(r,4)\rightarrow A'(r,4)\simeq V(r,4)\hookrightarrow V(r)$ lies in the closed subvariety $V_p(r)\subset V(r)$, where $p$ is the characteristic polynomial of
\[
(-1)^rs_{v_1}\circ\cdots\circ s_{v_r}\in\Ort(r).
\]
Here we regard each $v_i$ as an element in $S(r)$ by the embedding $S(4)\hookrightarrow S(r)$ to the first four components.
It is clear that the linear transformation $(-1)^rs_{v_1}\circ\cdots\circ s_{v_r}$ acts trivially on the last $r-4$ components. Hence it suffices to compute the characteristic polynomial of $(-1)^rs_{v_1}\circ\cdots\circ s_{v_r}$ as an element in $\Ort(4)$, and express it in terms of the boundary monodromy of $\rho$.

Recall that we have $s_v(u)=vu^{-1}v$, where $u,v$ are regarded as elements in $S(4)$ on the left hand side, and regarded as elements in $\SL_2$ on the right hand side. Therefore,
\begin{itemize}
    \item when $r$ is odd, $(-1)^rs_{v_1}\circ\cdots\circ s_{v_r}$ acts on $\SL_2$ as:
    \[
    u\mapsto-v_1v_2^{-1}v_3\cdots v_r u^{-1}v_rv_{r-1}^{-1}\cdots v_1;
    \]
    \item when $r$ is even, $(-1)^rs_{v_1}\circ\cdots\circ s_{v_r}$ acts on $\SL_2$ as:
    \[
    u\mapsto v_1v_2^{-1}\cdots v_r^{-1}uv_r^{-1}\cdots v_1.
    \]
\end{itemize}
It remains to prove the following linear algebraic lemma.
\end{proof}

\begin{lemma}
Let $a,b\in\SL_2$.
\begin{enumerate}
    \item\label{lemma:oddcase}
    The linear transformation $\Mat_{2}\rightarrow\Mat_{2}$ given by
    \[
    u\mapsto a\bar ub
    \]
    has characteristic polynomial
    \[
    p(\lambda)=(\lambda^2-k\lambda+1)(\lambda+1)(\lambda-1),
    \]
    where $k=-\tr(ab^{-1})$. (Recall that $\bar u$ denotes the adjugate matrix of $u$.)
    \item\label{lemma:evencase}
    The linear transformation $\Mat_{2}\rightarrow\Mat_{2}$ given by
    \[
    u\mapsto aub
    \]
    has characteristic polynomial
    \[
    p(\lambda)=\lambda^4-k_1k_2\lambda^3+(k_1^2+k_2^2-2)\lambda^2-k_1k_2\lambda+1,
    \]
    where $k_1=\tr(a)$ and $k_2=\tr(b)$.
\end{enumerate}
\end{lemma}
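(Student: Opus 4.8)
The plan is to realize both transformations as elements of the orthogonal group of the determinant form on $\Mat_2$, and then read off the characteristic polynomial from general structure together with a single trace computation. Throughout one may assume $F$ algebraically closed, since both statements are polynomial identities in the entries of $a$ and $b$.

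For part \ref{lemma:evencase}, I would identify $\Mat_2\cong F^2\otimes F^2$ so that left multiplication $L_a\colon u\mapsto au$ becomes $a\otimes\id$ and right multiplication $R_b\colon u\mapsto ub$ becomes $\id\otimes b^{\mathsf T}$; the map in question is then $T=L_a\circ R_b=a\otimes b^{\mathsf T}$. Writing the eigenvalues of $a$ as a reciprocal pair $\alpha,\alpha^{-1}$ (forced by $\det a=1$) with $\alpha+\alpha^{-1}=k_1$, and those of $b^{\mathsf T}$ (which are the same as those of $b$) as $\beta,\beta^{-1}$ with $\beta+\beta^{-1}=k_2$, the eigenvalues of $T$ are the four products $\alpha^{\pm1}\beta^{\pm1}$, counted with multiplicity — this holds regardless of semisimplicity, by simultaneously triangularizing $a$ and $b$. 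Grouping the linear factors of $\det(\lambda I-T)$ into the two quadratics $\lambda^2-(\alpha\beta+\alpha^{-1}\beta^{-1})\lambda+1$ and $\lambda^2-(\alpha\beta^{-1}+\alpha^{-1}\beta)\lambda+1$, and using the elementary identities $(\alpha\beta+\alpha^{-1}\beta^{-1})+(\alpha\beta^{-1}+\alpha^{-1}\beta)=(\alpha+\alpha^{-1})(\beta+\beta^{-1})=k_1k_2$ and $(\alpha\beta+\alpha^{-1}\beta^{-1})(\alpha\beta^{-1}+\alpha^{-1}\beta)=\alpha^2+\alpha^{-2}+\beta^2+\beta^{-2}=k_1^2+k_2^2-4$, expanding yields the stated polynomial.

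For part \ref{lemma:oddcase}, I would first record that the adjugate is linear with $\bar u=(\tr u)I-u$, so $C\colon u\mapsto\bar u$ acts as $+\id$ on the scalar matrices and as $-\id$ on the $3$-dimensional space of trace-zero matrices; in particular $\det C=-1$. Writing $T\colon u\mapsto a\bar ub=L_a\circ C\circ R_b$, the two key observations are: (i) $T$ preserves the determinant form, since $\det\bar u=\det u$ and $\det(aub)=\det u$ for $a,b\in\SL_2$, so the eigenvalue multiset of $T$ is stable under $\mu\mapsto\mu^{-1}$ (an orthogonal transformation has the same characteristic polynomial as its inverse); and (ii) $\det T=(\det a)^2(\det b)^2\det C=-1$. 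A short case analysis shows that an inversion-stable size-$4$ multiset in $\overline F^{\,\times}$ with product $-1$ is necessarily of the form $\{1,-1,\mu,\mu^{-1}\}$, so $\det(\lambda I-T)=(\lambda-1)(\lambda+1)(\lambda^2-(\tr T)\lambda+1)$, the coefficient of $\lambda$ in the last factor being pinned down since $\mu+\mu^{-1}=\tr T$. It then remains to compute $\tr T$: from $T(u)=(\tr u)\,ab-aub$ one gets $\tr T=\tr(ab)-\tr(a)\tr(b)$, and the $2\times2$ identity $\tr(a)\tr(b)=\tr(ab)+\tr(ab^{-1})$ (obtained from the Cayley--Hamilton relation $b+b^{-1}=(\tr b)I$ for $b\in\SL_2$) gives $\tr T=-\tr(ab^{-1})=k$, which is the desired value.

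I expect the odd case to be the main obstacle: one must rule out the possibility that the ``fixed'' eigenvalues degenerate to a repeated $+1$ (or $-1$) in a way incompatible with the claimed shape, and that the remaining reciprocal pair is left determined only up to the sign of its trace. Both issues are resolved precisely by the computations $\det T=-1$ and $\tr T=k$ above. As an independent sanity check, when $a$ and $b$ are linearly independent one sees directly that $T(a)=aa^{-1}b=b$ and $T(b)=ab^{-1}b=a$, so that $a+b$ and $a-b$ are honest eigenvectors of $T$ with eigenvalues $+1$ and $-1$.
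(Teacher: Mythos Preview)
Your proof is correct and takes a genuinely different, more conceptual route than the paper. The paper simply writes out the $4\times4$ matrix of each transformation in the entries $a_{ij},b_{ij}$ and computes the eigenvalues directly: for part~\ref{lemma:evencase} it verifies they are the products $\mu_i\nu_j$ of eigenvalues of $a$ and $b$, and for part~\ref{lemma:oddcase} it verifies they are $\pm1$ together with the eigenvalues of $-ab^{-1}$. Your argument replaces these computations with structure: the tensor identification $u\mapsto aub\cong a\otimes b^{\mathsf T}$ gives the even case immediately, while for the odd case you use that $T\in\Ort(q_4)$ forces an inversion-stable eigenvalue multiset, the determinant $\det T=-1$ pins down the shape $\{1,-1,\mu,\mu^{-1}\}$, and a clean trace computation via $\bar u=(\tr u)I-u$ identifies $\mu+\mu^{-1}=k$. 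The paper's approach has the virtue of being a finite verification requiring no insight; yours explains \emph{why} the polynomial has the stated form and would generalize more readily (e.g.\ the even-case argument works verbatim for $u\mapsto aub$ on $\Mat_n$ with $a,b\in\SL_n$). Your sanity check that $T$ swaps $a$ and $b$, exhibiting the $\pm1$ eigenvectors directly, is a nice touch absent from the paper.
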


\begin{proof}
Both statements can be verified by direct computations. Let $a=[a_{ij}]_{1\leq i,j\leq2}$ and $b=[b_{ij}]_{1\leq i,j\leq2}$. The transformation in (\ref{lemma:oddcase}) can be represented by the matrix
\[
\begin{bmatrix}
a_{12}b_{21} & a_{12}b_{22} & a_{22}b_{21} & a_{22}b_{22} \\
-a_{11}b_{21} & -a_{11}b_{22} & -a_{21}b_{21} & -a_{21}b_{22} \\
-a_{12}b_{11} & -a_{12}b_{12} & -a_{22}b_{11} & -a_{22}b_{12} \\
a_{11}b_{11} & a_{11}b_{12} & a_{21}b_{11} & a_{21}b_{12}
\end{bmatrix}.
\]
Using the condition that $a,b\in\SL_2$, one can show that the eigenvalues of the above matrix are $\pm1$ and $\mu_1,\mu_2$, where $\mu_1,\mu_2$ are the eigenvalues of $-ab^{-1}$. This proves the statement in (\ref{lemma:oddcase}).

Similarly, the transformation in (\ref{lemma:evencase}) can be represented by the matrix
\[
\begin{bmatrix}
a_{11}b_{11} & a_{11}b_{12} & a_{21}b_{11} & a_{21}b_{12} \\
a_{11}b_{21} & a_{11}b_{22} & a_{21}b_{21} & a_{21}b_{22} \\
a_{12}b_{11} & a_{12}b_{12} & a_{22}b_{11} & a_{22}b_{12} \\
a_{12}b_{21} & a_{12}b_{22} & a_{22}b_{21} & a_{22}b_{22}
\end{bmatrix}.
\]
The eigenvalues of the matrix are $\mu_1\nu_1,\mu_1\nu_2,\mu_2\nu_1,\mu_2\nu_2$, where $\mu_1,\mu_2$ are the eigenvalues of $a$ and $\nu_1,\nu_2$ are the eigenvalues of $b$. The statement in (\ref{lemma:evencase}) then follows from
\[
\lambda^4-k_1k_2\lambda^3+(k_1^2+k_2^2-2)\lambda^2-k_1k_2\lambda+1
=(\lambda-\mu_1\nu_1)(\lambda-\mu_2\nu_1)(\lambda-\mu_1\nu_2)(\lambda-\mu_2\nu_2).
\]
\end{proof}

\begin{remark}
When $r$ is even, the embedding $X_k(\Sigma_{g,n},\SL_2)\hookrightarrow V_P(r)$ established in Proposition~\ref{prop:chekhovmazzocco} provides a conceptual clarification of (the complexification of) the result of Chekhov--Mazzocco \cite{ChekhovMazzocco} on embeddings of Teichm\"uller spaces of surfaces into the varieties of Stokes matrices.
\end{remark}

Next, we show that the morphism $X_k(\Sigma_{g,n},\SL_2)\rightarrow V_P(r)$ is compatible with the integral structures on $X_k(\Sigma_{g,n},\SL_2)$ and $V(r)$.

\begin{proposition}
\label{prop:integralcharStokes}
The morphism $X_k(\Sigma_{g,n},\SL_2)\rightarrow V_P(r)$ defined in Proposition~\ref{prop:chekhovmazzocco} sends integral points in $X_k(\Sigma_{g,n},\SL_2)$ to integral Stokes matrices.
\end{proposition}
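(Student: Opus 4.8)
The plan is to trace the integral structure through the chain of isomorphisms in Proposition~\ref{prop:chekhovmazzocco}. Recall that a point of $X_k(\Sigma_{g,n},\SL_2)$ is integral when the associated $\SL_2$-representation $\rho$ of $\pi_1(\Sigma_{g,n})$ has the property that all entries of all monodromy matrices lie in $\Z$; equivalently, under the identification $X(\Sigma_{g,n},\SL_2)\simeq C(r,\SL_2)$ of Proposition~\ref{braidprop}, the tuple $(\rho(\alpha_1),\dots,\rho(\alpha_{r-1}))$ consists of integral matrices, and then $\Psi$ of Proposition~\ref{cd} produces an integral tuple $(a_1,\dots,a_r)\in\SL_2^r$ with $a_r=1$. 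The target point of $V_P(r)$ is, by Corollary~\ref{coro;A'andStokes}, the Stokes matrix whose $(i,j)$-entry for $i<j$ is $2\langle v_i,v_j\rangle$, where $(v_1,\dots,v_r)\in S(4)^r\subset(\A^4)^r$ is the preimage of $(a_1,\dots,a_r)$ under the isomorphism $A(r,4)\simeq B(r,\SL_2)$ of Proposition~\ref{sporadic}(3). So the whole statement reduces to showing: if $a_1,\dots,a_r\in\SL_2(\Z)$, then $2\langle v_i,v_j\rangle\in\Z$ for all $i<j$.

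The key point is to make the pairing $\langle v_i,v_j\rangle$ explicit in terms of the matrices $a_i$. Under the isomorphism $j\colon\Cl(q_4)\simeq M_4$ of Example~\ref{cliffordex}(3), the sphere $S(q_4)=\SL_2$ sits inside $V_4=\Mat_2$ as the set $\{x:\det x=1\}$, and the quadratic form is $q_4(x)=\det(x)$. The associated bilinear form is the polarization $\langle x,y\rangle=\tfrac12\big(\det(x+y)-\det(x)-\det(y)\big)$, which for $2\times2$ matrices equals $\tfrac12\big(x_{11}y_{22}+x_{22}y_{11}-x_{12}y_{21}-x_{21}y_{12}\big)=\tfrac12\tr(x\bar y)$, where $\bar y$ is the adjugate. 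Hence
$$2\langle v_i,v_j\rangle=\tr(v_i\bar v_j).$$
Now $v_i$ is the image of $a_i$ under $A(r,4)\xleftarrow{\sim}B(r,\SL_2)$; chasing through the proof of Proposition~\ref{sporadic}(3) (where the sphere quandle on $S(q_4)$ is identified with the core quandle on $\SL_2$), the representative $v_i$ is literally the matrix $a_i\in\SL_2\subset\Mat_2$ — the identification of $S(q_4)$ with $\SL_2$ as a set is the tautological one. Therefore $2\langle v_i,v_j\rangle=\tr(a_i\bar a_j)=\tr(a_i a_j^{-1})$ (using $\bar a_j=a_j^{-1}$ for $a_j\in\SL_2$), which is manifestly an integer whenever $a_i,a_j\in\SL_2(\Z)$. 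This proves integrality of the Stokes matrix.

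I would organize the write-up in two short steps: first, record the formula $2\langle v_i,v_j\rangle=\tr(a_i a_j^{-1})$ by combining the explicit description of the bilinear form on $(\Mat_2,\det)$ with the identification of representatives in Proposition~\ref{sporadic}(3); second, observe that the standard integral structures on $X_k(\Sigma_{g,n},\SL_2)$ (via $\SL_2(\Z)$-valued monodromy, equivalently integral tuples in $C(r,\SL_2)$ via Proposition~\ref{braidprop}) and on $V(r)$ (integral entries) are interchanged, since $\Phi,\Psi$ of Proposition~\ref{cd} are defined by Laurent-monomial formulas in $\SL_2$ that preserve $\SL_2(\Z)$-integrality, and then $\tr(a_i a_j^{-1})\in\Z$. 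The only mild subtlety — the main thing to get right — is the bookkeeping of \emph{which} representatives $(v_1,\dots,v_r)$ and $(a_1,\dots,a_r)$ correspond under the composite isomorphism, and in particular checking that the traces $\tr(a_ia_j^{-1})$ are exactly the entries produced by $\Phi$ composed with Corollary~\ref{coro;A'andStokes}; this is a direct unwinding of the definitions with no real obstacle, since every isomorphism in the chain was given by an explicit formula.
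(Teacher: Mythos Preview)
Your computation of the Stokes entries as $2\langle v_i,v_j\rangle=\tr(a_i\bar a_j)=\tr(a_ia_j^{-1})$ is exactly the right idea and matches the paper's argument. The gap is in your definition of an integral point of $X_k(\Sigma_{g,n},\SL_2)$. In this paper (see Section~\ref{sect:6.1}), a point $\rho\in X_k$ is integral when its monodromy \emph{trace} along every loop lies in $\Z$; this is the notion that makes $X_k(\Z)$ the set of $\Z$-points of the natural integral model, since the coordinate ring of the $\SL_2$-character variety is generated by trace functions. It is \emph{not} defined by, nor equivalent to, the existence of a representative $\rho$ with $\rho(\alpha_i)\in\SL_2(\Z)$. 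So you cannot assume the $a_i$ lie in $\SL_2(\Z)$, and the sentence ``which is manifestly an integer whenever $a_i,a_j\in\SL_2(\Z)$'' does not finish the proof.

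The repair is immediate and recovers precisely the paper's argument: unwind $a_i=\rho(\alpha_i\alpha_{i+1}\cdots\alpha_{r-1})$ from $\Psi$, so that for $i<j$ one has $a_ia_j^{-1}=\rho(\alpha_i\alpha_{i+1}\cdots\alpha_{j-1})$ and hence
\[
2\langle v_i,v_j\rangle=\tr(a_ia_j^{-1})=\tr\rho(\alpha_i\alpha_{i+1}\cdots\alpha_{j-1}),
\]
which is the trace of $\rho$ along an honest loop in $\Sigma_{g,n}$ and is therefore an integer by the correct definition of integrality. With this correction your proof is essentially the paper's proof.
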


\begin{proof}
Let $\alpha_1,\ldots,\alpha_{r-1}$ be the hyperelliptic generators of $\pi_1(\Sigma_{g,n})$. Recall that the composition $X(\Sigma_{g,n},\SL_2)\simeq C(r,\SL_2)\simeq B(r,\SL_2)\simeq A(r,4)\rightarrow A'(r,4)$ established in previous sections is given by
\[
\rho\mapsto[(\rho(\alpha_1\cdots\alpha_{r-1}), \rho(\alpha_2\cdots\alpha_{r-1}),\ldots,\rho(\alpha_{r-1}),1)].
\]
Here we identify $\SL_2$ with $S(q_4)$, where the quadratic form is given by $q_4(x_{ij})=x_{11}x_{22}-x_{12}x_{21}=\det(x_{ij})$ on the space $V_4=\Mat_2$.
Note that for any $a,b\in\Mat_2$, we have
\begin{align*}
\left<a,b\right>_q\I_2&=\frac12\Big(\det(a+b)-\det(a)-\det(b)\Big)\I_2\\
&=\frac12\Big((a+b)(\bar a+\bar b)-a\bar a-b\bar b\Big)\\
&=\frac12\Big(a\bar b+b\bar a\Big)\\
&=\frac{\tr(a\bar b)}2\I_2
\end{align*}
By Corollary \ref{coro;A'andStokes}, the Stokes matrix associated to $\rho$ is given by
\[
s=
\begin{bmatrix}
1&\tr\rho(\alpha_1)&\tr\rho(\alpha_1\alpha_2)&\cdots&\tr\rho(\alpha_1\cdots\alpha_{r-1})\\
&1&\tr(\alpha_2)&\cdots&\tr\rho(\alpha_2\cdots\alpha_{r-1})\\
&&1&\cdots&\tr\rho(\alpha_3\cdots\alpha_{r-1})\\
&&&\ddots&\vdots\\
&&&&1
\end{bmatrix}.
\]
The proposition then follows from the fact that the integral points on $X_k(\Sigma,\SL_2)$ correspond to local systems having integral traces along every loop of $\Sigma$.
\end{proof}

Finally, we show that the morphism $X(\Sigma_{g,n},\SL_2)\rightarrow V(r)$ defined in Proposition~\ref{prop:chekhovmazzocco} is a Poisson morphism, with respect to the natural Poisson structures on $X(\Sigma_{g,n},\SL_2)$ and $V(r)$ which we now recall.

The natural Poisson structure on $X(\Sigma_{g,n},\SL_2)$ was introduced by Goldman \cite{GoldmanPoisson} for closed surfaces, and extended to surfaces with boundary for $\SL_n$-representations in a work of Lawton \cite[Theorem~15]{Lawton}.
Let $\alpha,\beta\in\pi_1(\Sigma_{g,n})$ be represented by oriented immersed curves in general position. Then the Poisson bracket of the trace functions is given by
\[
\{\tr_\alpha,\tr_\beta\}=\frac12\sum_{p\in\alpha\cap\beta}\epsilon(p;\alpha,\beta)\Big(\tr_{\alpha_p\beta_p}-\tr_{\alpha_p\beta_p^{-1}}\Big),
\]
where $\epsilon(p;\alpha,\beta)=\pm1$ denotes the oriented intersection number of $\alpha$ and $\beta$ at $p$, and $\alpha_p,\beta_p$ are elements in $\pi_1(\Sigma_{g,n},p)$ corresponding to $\alpha$ and $\beta$. Note that the symplectic leaves of this Poisson structure are the level sets of the boundary monodromy morphism $c\colon X(\Sigma_{g,n},\SL_2)\to W(\SL_2)^{\pi_0(\del \Sigma)}$.

On the other hand, there is a natural Poisson structure on $V(r)$ introduced by Dubrovin \cite{Dubrovin} and Ugaglia \cite{Ugaglia}. For $i<j$ and $k<\ell$, the Poisson bracket (up to an overall scaling) of $s_{ij}$ and $s_{k\ell}$ is given by
\[
\{s_{ij},s_{k\ell}\}=
\begin{cases}
\frac12s_{ij}s_{i\ell}-s_{j\ell} & \text{if } i=k\text{ and }j<\ell,\\
\frac12s_{ij}s_{kj}-s_{ik} & \text{if } i<k\text{ and }j=\ell,\\
s_{ij}-\frac12s_{ij}s_{j\ell} & \text{if } j=k,\\
s_{i\ell}s_{kj}-s_{ik}s_{j\ell} & \text{if } i<k<j<\ell,\\
0 & \text{if } j<k,\\
0 & \text{if } i<k \text{ and } j>\ell.
\end{cases}
\]
A description of symplectic leaves of this Poisson structures can be found in \cite[Section~5.5]{Bondal04}.

\begin{proposition}
The morphism $X(\Sigma_{g,n},\SL_2)\rightarrow V(r)$ defined in Proposition~\ref{prop:chekhovmazzocco} is a Poisson morphism.
\end{proposition}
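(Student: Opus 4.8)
The plan is to check the bracket relations on a set of algebra generators of $\Ocal(V(r))$. Since $V(r)$ is an affine space with coordinate ring $\Ocal(V(r))=F[s_{ij}:1\le i<j\le r]$, a morphism into $V(r)$ is Poisson exactly when the pullbacks of the coordinate functions $s_{ij}$ satisfy the Dubrovin--Ugaglia relations. By the explicit description of the Stokes matrix attached to a representation $\rho$ recorded in the proof of Proposition~\ref{prop:integralcharStokes}, the function $s_{ij}$ pulls back to the trace function $\tr_{\gamma_{ij}}$ on $X(\Sigma_{g,n},\SL_2)$, where $\gamma_{ij}=\alpha_i\alpha_{i+1}\cdots\alpha_{j-1}$ and $\alpha_1,\dots,\alpha_{r-1}$ is a hyperelliptic sequence of generators of $\pi_1(\Sigma_{g,n})$. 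So the statement reduces to showing that $\{\tr_{\gamma_{ij}},\tr_{\gamma_{k\ell}}\}$, computed by Goldman's formula, agrees with the Dubrovin--Ugaglia expression for $\{s_{ij},s_{k\ell}\}$ after the substitution $s_{pq}\mapsto\tr_{\gamma_{pq}}$, for all $1\le i<j\le r$ and $1\le k<\ell\le r$.

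The first step is geometric, and I would carry it out using the hyperelliptic (Birman--Hilden) picture underlying the braid group embedding $B_r\hookrightarrow\Gamma(\Sigma_{g,n})$ of Proposition~\ref{braidprop} and Figure~\ref{fig1}, in which $\Sigma_{g,n}$ is a double branched cover of a disk over marked points $p_1,\dots,p_r$. Each $\gamma_{ij}$ is freely homotopic to the simple closed curve $c_{ij}$ that is the preimage of an embedded arc $A_{ij}$ joining $p_i$ to $p_j$, and the $A_{ij}$ can be arranged simultaneously in minimal position. Reading off the downstairs picture then yields the geometric intersection number of $c_{ij}$ and $c_{k\ell}$ purely in terms of the relative position of the intervals $[i,j]$ and $[k,\ell]$: it is $0$ when the intervals are separated ($j<k$ or $\ell<i$) or strictly nested ($i<k<\ell<j$ or $k<i<j<\ell$); it is $1$ when they share exactly one endpoint (the cases $j=k$, $i=k<j<\ell$, and $i<k<j=\ell$); and it is $2$ when they interleave ($i<k<j<\ell$). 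The same picture identifies, at each intersection point, the based words of the resolved curves, via the relation $\gamma_{i\ell}=\gamma_{ij}\gamma_{j\ell}$ and free cancellation (so, e.g., $\gamma_{ij}\gamma_{i\ell}^{-1}\sim\gamma_{j\ell}^{-1}$ and $\gamma_{ij}\gamma_{kj}^{-1}\sim\gamma_{ik}$); and once orientations of the $c_{ij}$ are fixed compatibly with the ribbon structure, it also determines the local intersection signs $\epsilon(p;c_{ij},c_{k\ell})$. This choice pins down the overall normalization of the Dubrovin--Ugaglia bracket, which is defined only up to scale.

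The second step is a case-by-case application of Goldman's formula, using only the $\SL_2$ trace identities $\tr_{XY}+\tr_{XY^{-1}}=\tr_X\tr_Y$ and $\tr_{X^2Y}=\tr_X\tr_{XY}-\tr_Y$. In the separated and strictly nested cases the curves are disjoint and the bracket vanishes, matching the two ``$0$'' rows. In each single-intersection case the resolution produces $\gamma_{i\ell}=\gamma_{ij}\gamma_{j\ell}$ and $\gamma_{ij}\gamma_{j\ell}^{-1}$ (or the corresponding prefix/suffix words), so Goldman's formula together with the two trace identities returns the value prescribed by the Dubrovin--Ugaglia table; concretely, the $j=k$ case yields $\tr_{\gamma_{i\ell}}-\tfrac12\tr_{\gamma_{ij}}\tr_{\gamma_{j\ell}}$, the prefix case $i=k<j<\ell$ yields $\tfrac12\tr_{\gamma_{ij}}\tr_{\gamma_{i\ell}}-\tr_{\gamma_{j\ell}}$, and the suffix case $i<k<j=\ell$ yields $\tfrac12\tr_{\gamma_{ij}}\tr_{\gamma_{kj}}-\tr_{\gamma_{ik}}$. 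The interleaved case $i<k<j<\ell$ is the only one with two intersection points: summing the two contributions and repeatedly applying the trace identities collapses the four resulting trace monomials to $\tr_{\gamma_{i\ell}}\tr_{\gamma_{kj}}-\tr_{\gamma_{ik}}\tr_{\gamma_{j\ell}}$, matching $s_{i\ell}s_{kj}-s_{ik}s_{j\ell}$; this is an instance of the standard product expansion of the Goldman bracket of two simple closed curves meeting twice.

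I expect the main obstacle to be the uniform bookkeeping in the first step: realizing the curves $c_{ij}$ in minimal position for all pairs at once, and reading off the intersection points, the resolution words, and especially the local orientation signs so that they are globally consistent and reproduce the (antisymmetric) Dubrovin--Ugaglia table. The Birman--Hilden picture is what makes this feasible, since there the curves are preimages of arcs between marked points and all crossings away from the marked points come in cancelling pairs. As a sanity check one can first run the whole argument on $\Sigma_{1,1}$ and $\Sigma_{1,2}$, which between them already exhibit all six interval configurations (with $r=3$ recovering the Dubrovin--Ugaglia rank-$3$ bracket directly), and then observe that, since Goldman's formula only involves resolutions of the two curves in question, the computation for a general pair $\gamma_{ij},\gamma_{k\ell}$ takes place on the subsurface they fill, which is one of these finitely many local models.
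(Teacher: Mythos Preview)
Your approach is essentially the same as the paper's: both reduce to checking that the Goldman brackets $\{\tr_{\gamma_{ij}},\tr_{\gamma_{k\ell}}\}$ reproduce the six Dubrovin--Ugaglia relations, using the $\SL_2$ trace identities. The paper simply asserts this case check and works out the single case $i=k<j<\ell$ explicitly, whereas you supply more of the geometric bookkeeping (intersection numbers, resolution words, signs) via the Birman--Hilden picture; this is a helpful elaboration rather than a different route.
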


\begin{proof}
Recall from the proof of Proposition~\ref{prop:integralcharStokes} that the morphism $X(\Sigma_{g,n},\SL_2)\rightarrow V(r)$ is given by
\[
\rho\mapsto
\begin{bmatrix}
1&\tr\rho(\alpha_1)&\tr\rho(\alpha_1\alpha_2)&\cdots&\tr\rho(\alpha_1\cdots\alpha_{r-1})\\
&1&\tr(\alpha_2)&\cdots&\tr\rho(\alpha_2\cdots\alpha_{r-1})\\
&&1&\cdots&\tr\rho(\alpha_3\cdots\alpha_{r-1})\\
&&&\ddots&\vdots\\
&&&&1
\end{bmatrix}.
\]
The proposition then follows from computing the Poisson bracket $$\{\tr_{\alpha_{i}\alpha_{i+1}\cdots\alpha_{j-1}},\tr_{\alpha_{k}\alpha_{k+1}\cdots\alpha_{\ell-1}}\}$$
for each of the six cases above.
For instance, when $i=k<j<\ell$, we have
\begin{align*}
\{\tr_{\alpha_{i}\alpha_{i+1}\cdots\alpha_{j-1}},\tr_{\alpha_{i}\alpha_{i+1}\cdots\alpha_{\ell-1}}\}&=\frac12\Big(\tr_{(\alpha_{i}\alpha_{i+1}\cdots\alpha_{j-1})^2\alpha_{j}\cdots\alpha_{\ell-1}}-\tr_{\alpha_{j}\alpha_{j+1}\cdots\alpha_{\ell-1}}\Big)\\
&=\frac12\tr_{\alpha_{i}\alpha_{i+1}\cdots\alpha_{j-1}}\tr_{\alpha_{i}\alpha_{i+1}\cdots\alpha_{\ell-1}}-\tr_{\alpha_{j}\alpha_{j+1}\cdots\alpha_{\ell-1}}.
\end{align*}
Here we used the fact that $\tr(A^2B)=\tr(AB)\tr(A)-\tr(B)$ for any $A,B\in\SL_2$.
\end{proof}

\section{Diophantine theorem}\label{sect:6}
Let $\Sigma_{g,n}$ be a surface of genus $g\geq0$ with $n\in\{1,2\}$ boundary curves, and let $r=2g+n$. By Theorem \ref{mainthm}, there is a $B_r$-invariant isomorphism
$$A_P(r,4)\simeq X_k(\Sigma_{g,n},\SL_2)$$
where the Coxeter invariant $P$ determines the boundary monodromy $k$, and vice versa. The Diophantine aspects of the latter were investigated in \cite{Whang2, Whang3}. Motivated by applications to the study of rank $4$ integral Stokes matrices, in this section we refine this Diophantine study in the case $\Sigma=\Sigma_{1,2}$ of a two-holed torus, and prove Theorem \ref{mainthm2}. This section is organized as follows. In Section \ref{sect:6.1}, we review general structure theorems for integral points on the varieties $X_k(\Sigma_{g,n},\SL_2)$. We give an analysis of the classical case $(g,n)=(1,1)$ in Section \ref{sect:6.2}, which goes back to work of Markoff \cite{markoff}. This together with preliminary observations for the case $(g,n)=(0,4)$ in Section \ref{sect:6.3} are used to prove Theorem \ref{mainthm2} in Section \ref{sect:6.4}.

\subsection{Review of structure theory}\label{sect:6.1}
Let $\Sigma$ be a compact oriented surface of genus $g$ with $n$ boundary curves satisfying $3g+n-3>0$. For $k\in\C^n$, let $X_k=X_k(\Sigma,\SL_2)$ denote the moduli of $\SL_2(\C)$-local systems on $\Sigma$ with boundary monodromy traces $k$. We will be interested in the study of integral points on $X_k$. Let us define an \emph{essential curve} in $\Sigma$ to be a simple closed curve on $\Sigma$ which is noncontractible and not isotopic to a boundary curve of $\Sigma$.

\begin{definition}
Let $k\in\C^n$. A point $\rho\in X_k(\Z)$ defined to be \emph{integral} if its monodromy trace along every essential curve on $\Sigma$ is integral.
\end{definition}

If $k\in\Z^n$, the variety $X_k$ admits a natural integral model over $\Z$, in which case the definition of $X_k(\Z)$ above coincides with the set of integral points on $X_k$ in the usual algebro-geometric sense \cite[Lemma 2.5]{Whang2}. We make the following definition.

\begin{definition}
\label{degenerate}
The \emph{degenerate locus} of $X_k$ is the union of images of nonconstant morphisms $\A^1\to X_k$. A point or a subvariety of $X_k$ is \emph{degenerate} if it belongs to the degenerate locus of $X_k$, and is \emph{nondegenerate} otherwise.
\end{definition}

It was proved in \cite{Whang} that, in the case $n\geq1$, each $X_k$ is log Calabi--Yau in the sense that it admits a normal projective compactification with trivial log canonical divisor. Definition \ref{degenerate} is motivated by consideration of the log Calabi--Yau geometry of the moduli spaces $X_k$; see \cite[Section 1.3]{Whang2} for details.

\begin{theorem}[\cite{Whang}]
\label{dioph}
The nondegenerate integral points of $X_k(\Z)$ consist of finitely many mapping class group orbits. There is a proper closed subvariety $Z\subset X_k$ whose orbit gives precisely the locus of degenerate points on $X_k$.
\end{theorem}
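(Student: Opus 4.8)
The plan is to prove this by a \emph{nonlinear descent} on integral points, generalizing the classical Markoff argument that will be recalled for $\Sigma_{1,1}$ in Section \ref{sect:6.2}. Fix once and for all a finite collection $\Gcal$ of essential simple closed curves on $\Sigma$ whose trace functions generate the coordinate ring of $X_k$ (such a collection exists, and after enlarging it we may assume it fills $\Sigma$), and define the \emph{complexity} of an integral point $\rho\in X_k(\Z)$ to be $\Vert\rho\Vert=\max_{\gamma\in\Gcal}|\tr_\gamma(\rho)|$. The structural engine is the $\SL_2$ trace identity $\tr_{\alpha\beta}+\tr_{\alpha\beta^{-1}}=\tr_\alpha\tr_\beta$: in a suitable system of trace coordinates, fixing all but one coordinate forces the remaining one to satisfy a monic quadratic $T^2-bT+c=0$ whose coefficients are polynomials in the others, so an appropriate Dehn twist acts by the Vieta involution $x\mapsto b-x$ exchanging its two roots. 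The key lemma to establish is a \emph{descent step}: there is a constant $C=C(\Sigma,k)$ such that every nondegenerate integral point $\rho$ with $\Vert\rho\Vert>C$ admits a Dehn twist $\tau$ with $\Vert\tau^{\pm1}\rho\Vert<\Vert\rho\Vert$ --- realizing the largest trace $\tr_\gamma(\rho)$ as a root $x$ of such a quadratic, the product of roots $c=x(b-x)$ is forced to have small absolute value, and hence the other root $b-x$ (the effect of the twist) is far smaller than $x$.

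Granting the descent step, the first assertion is immediate: iterating, the mapping class group moves every nondegenerate integral point into the region $\{\Vert\rho\Vert\le C\}$, and for fixed $k\in\Z^n$ this region meets $X_k(\Z)$ in only finitely many points, because $X_k$ embeds as a closed subvariety of affine space via finitely many trace coordinates from $\Gcal$ and the resulting integer vectors are bounded. For the second assertion I would determine precisely \emph{when} the descent stalls: this can happen only if at every essential curve the relevant quadratic has a repeated root, or two roots of comparable size, which forces $\tr_\gamma(\rho)=\pm2$ along some essential simple closed curve $\gamma$; geometrically the local system is then reducible, or restricts to a central or unipotent representation on a subsurface bounded by $\gamma$, and in either case one exhibits a nonconstant morphism $\A^1\to X_k$ through $\rho$ by varying the off-diagonal entry of a unipotent block or the extension class of a nonsplit reducible local system. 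Since there are only finitely many mapping class group orbits of essential simple closed curves on $\Sigma$, one takes $Z$ to be the finite union of the closed strata $\{\tr_\gamma=\pm2\}$ over a set of representatives $\gamma$, together with the reducible locus; these are proper closed subvarieties, each component is swept out by affine lines, and a refinement of the descent argument shows that the image of every nonconstant morphism $\A^1\to X_k$ lies in the mapping class group orbit of $Z$. Hence that orbit is exactly the degenerate locus.

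The main obstacle is the descent step together with the sharp description of where it stalls: one must show, uniformly across all trace configurations and all topological types of $\Sigma$, that if \emph{no} mapping class reduces the complexity then the local system necessarily lies on one of the degenerate strata. This is where the log Calabi--Yau geometry of $X_k$ from \cite{Whang} does the organizing work: the normal compactification has a boundary whose divisors are indexed combinatorially by essential curves and carry a convexity that both powers the descent (each integral point ``points toward'' finitely many boundary divisors, and the relevant twists lower the corresponding coordinates) and identifies the degenerate locus (the affine lines in $X_k$ are precisely those asymptotic to the boundary along the strata $\{\tr_\gamma=\pm2\}$ or the reducible locus). A secondary technical point, needed for the finiteness statement, is that the traces along a finite system of simple closed curves generate the coordinate ring of $X_k$, so that bounded complexity genuinely cuts out a finite set of integral points.
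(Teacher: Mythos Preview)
The paper does not prove this theorem at all: it is quoted from the second author's earlier work (despite the label \cite{Whang}, the structure theorem for integral points is \cite[Theorem~1.1]{Whang2}, with the modular description of the degenerate locus coming from \cite{Whang3}; see Theorem~\ref{deg} below). So there is no proof in the paper to compare against, and your proposal is really a sketch of how you imagine the argument of \cite{Whang2} goes.

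As a sketch it has the right shape --- a Markoff-type nonlinear descent using Vieta involutions coming from Dehn twists, organized by the log Calabi--Yau compactification of \cite{Whang} --- but several of the load-bearing steps are asserted rather than argued, and at least one is not correct as stated. First, the descent step: your claim that ``the product of roots $c=x(b-x)$ is forced to have small absolute value'' is not justified, and in fact in the prototype $x^2+y^2+z^2-xyz=k+2$ the product of the two $x$-roots is $y^2+z^2-k-2$, which is of order $|x|^2$ when $|x|$ is maximal; the descent works for a subtler reason (comparing the two roots via the \emph{sum} $yz$, not the product), and for higher-genus $\Sigma$ one cannot simply twist along the curve realizing the maximal trace --- the actual argument in \cite{Whang2} requires a careful choice of curve system and an induction on the topology of $\Sigma$ via cutting. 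Second, your identification of where descent stalls is incomplete: Theorem~\ref{deg} shows that degeneracy is equivalent to \emph{either} some essential trace being $\pm2$ \emph{or} the restriction to some pair of pants being reducible, and the second condition is not implied by the first (nor by your ``reducible locus'', which for a free group of rank $\ge2$ is the wrong stratum). Third, you conflate two distinct statements: ``descent cannot reduce complexity'' and ``the point is degenerate''. Showing these coincide is the real content; your sketch assumes it.
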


A modular characterization of the degenerate locus of $X_k$ was given in \cite{Whang3}, which we recall below.  A \emph{nontrivial pair of pants} in $\Sigma$ is a subsurface of genus $0$ with $3$ boundary curves each of which is either essential or a boundary curve on $\Sigma$.

\begin{theorem}
\label{deg}
A point $\rho\in X_k(\C)$ is degenerate if and only if one of the following conditions holds:
\begin{enumerate}
    \item There is an essential curve $a\subset\Sigma$ such that $\tr\rho(a)=\pm2$, or
    \item $(g,n,k)\neq(1,1,2)$ and there is a nontrivial pair of pants $\Sigma'\subset\Sigma$ such that the restriction $\rho|\Sigma'$ is reducible.
\end{enumerate}
\end{theorem}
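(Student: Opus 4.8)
The plan is to establish the two implications separately: the ``if'' direction, that (1) or (2) forces $\rho$ to be degenerate, is essentially constructive, while the ``only if'' direction is the substantive one. For the ``if'' direction, suppose first that some essential curve $a\subset\Sigma$ satisfies $\tr\rho(a)=\pm2$. When $\rho(a)$ is a nontrivial $\pm$-unipotent, its centralizer in $\SL_2$ has identity component isomorphic to $\G_a\cong\A^1$, and the Goldman twist flow along $a$ — cut $\Sigma$ along $a$ and act on one side by this group — defines a morphism $\A^1\to X_k$ through $\rho$ that fixes all boundary traces, being supported in the interior; when $\rho(a)=\pm I$ one argues similarly, conjugating the monodromy of one side of a cut along $a$ by a line in $\SL_2$. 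Either way one must check the resulting family is genuinely nonconstant, which can fail only when the restriction of $\rho$ to a complementary subsurface has abelian or central image, in which case one passes to a smaller subsurface and repeats. Now suppose instead that $(g,n,k)\neq(1,1,2)$ and there is a nontrivial pair of pants $\Sigma'\subset\Sigma$ with $\rho|_{\Sigma'}$ reducible; by the previous case we may assume (1) fails, so no essential curve of $\Sigma$ has trace $\pm2$. Conjugating $\rho|_{\Sigma'}$ into a Borel subgroup and writing $\pi_1\Sigma'=\langle x_1,x_2,x_3 : x_1x_2x_3=1\rangle$ with the $x_i$ representing the boundary curves, the family $\rho_t|_{\Sigma'}$ obtained by scaling the strictly upper-triangular parts of $\rho(x_i)$ by $t$ is a representation for every $t\in\A^1$ — the relation survives because the diagonal parts are unchanged and the off-diagonal obstruction already vanishes at $t=1$ — preserves each $\tr\rho(x_i)$, and degenerates at $t=0$ to a diagonal representation. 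Since no essential $x_i$ has trace $\pm2$, each $\rho_t(x_i)$ stays in the regular semisimple conjugacy class of $\rho(x_i)$, so after conjugating in collars of the essential boundary curves one glues $\rho_t|_{\Sigma'}$ to $\rho$ on $\overline{\Sigma\setminus\Sigma'}$ and obtains a morphism $\A^1\to X_k$ through $\rho$; it is nonconstant provided $\rho|_{\Sigma'}$ is not semisimple, the residual semisimple case being handled by a separate bending construction along a boundary curve of $\Sigma'$ after checking it extends over $t=0$.

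For the ``only if'' direction, suppose $\rho$ is degenerate, so there is a nonconstant morphism $f\colon\A^1\to X_k$ with $\rho$ in its image. Since $X_k$ is affine, $f$ cannot extend over $\infty$, and since $X_k$ is log Calabi--Yau — it admits, by \cite{Whang}, a normal projective compactification with trivial log canonical divisor — the limit of $f$ at $\infty$ lies on a boundary divisor, which under the dictionary between such divisors and multicurves on $\Sigma$ singles out a nonempty multicurve $\mathcal{C}$. The plan is to show, using trace identities to propagate boundedness, that $\mathcal{C}$ consists precisely of the isotopy classes of simple closed curves $c$ with $\tr_c\circ f$ constant, and then to cut $\Sigma$ along $\mathcal{C}$: if a component $a$ of $\mathcal{C}$ is essential with $\tr\rho(a)=\pm2$ we land in case (1); otherwise the curves meeting $\mathcal{C}$ have traces growing along $f$ while those inside a complementary component do not, which forces the monodromy on that component to be reducible, and refining to a pair of pants whose boundary curves are essential in $\Sigma$ or components of $\partial\Sigma$ gives case (2). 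The exceptional triple $(g,n,k)=(1,1,2)$, where the defining cubic surface of $X_k(\Sigma_{1,1})$ acquires extra degeneracy and the pair-of-pants criterion would be satisfied too often, is excluded from (2) and treated directly from the explicit equation.

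The main obstacle is this ``only if'' direction: an arbitrary rational curve in $X_k$ carries no topological data a priori, so the crux is the passage from the behaviour of $f$ near infinity — controlled by the log Calabi--Yau compactification of \cite{Whang} — to the multicurve $\mathcal{C}$, together with the trace-growth analysis yielding reducibility on a complementary subsurface. Subsidiary difficulties are the nonconstancy verifications in the ``if'' direction (ensuring one genuinely produces an $\A^1$ rather than a $\G_m$, and that the family moves $\rho$) and the separate treatment of the exceptional triple $(1,1,2)$.
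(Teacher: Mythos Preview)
The paper does not contain a proof of Theorem~\ref{deg}: it is introduced by the sentence ``A modular characterization of the degenerate locus of $X_k$ was given in \cite{Whang3}, which we recall below,'' and is simply quoted from that reference without argument. There is therefore no in-paper proof to compare your proposal against.

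That said, a few remarks on your sketch on its own terms. The ``if'' direction is largely sound in outline: the bending/twist construction along a curve of trace $\pm 2$ and the Borel-scaling family on a reducible pair of pants are the natural sources of affine lines. One point to be careful about in case (2) is the gluing: you need the conjugating elements in the collars (which bring $\rho_t(x_i)$ back to $\rho(x_i)$) to depend \emph{regularly} on $t$ across $t=0$, not merely to exist pointwise; this is fine since the boundary monodromies stay regular semisimple, but it should be said. The nonconstancy checks you flag are genuine and in low complexity require case analysis.

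The ``only if'' direction is where your sketch is thinnest. Extracting a multicurve from the limiting behaviour of $f\colon\A^1\to X_k$ at infinity via the compactification of \cite{Whang} is the right idea, but the step ``those inside a complementary component do not [grow], which forces the monodromy on that component to be reducible'' is doing a lot of work: boundedness of traces along $\A^1$ forces them to be constant (regular functions), but constancy of all trace functions on a subsurface does not by itself yield reducibility of the restriction---you need an argument that the resulting constant point sits in a positive-dimensional fibre of the restriction map, and then a separate classification of which local systems on a pair of pants admit nontrivial isomonodromic deformations. That classification is exactly where conditions (1) and (2) come from, and it is the substantive content of the result in \cite{Whang3}; your sketch gestures at it but does not supply it.
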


In light of Theorem \ref{deg} above, the following special case of \cite[Corollary 5.8]{Whang} gives a stronger variant of the first part of Theorem \ref{dioph}. Given any subset $A\subset\C$, let us denote by $X_k(A)$ the subset of $X_k(\C)$ such that $\tr\rho(a)\in A$ for every essential curve $a\subset\Sigma$. The following is a corollary of \cite[Theorem 1.4]{Whang2}.

\begin{theorem}
\label{fin}
For any $k\in\C^n$, the set $X_k(\Z\setminus\{\pm2\})$
consists of finitely many mapping class group orbits.
\end{theorem}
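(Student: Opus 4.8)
The plan is to deduce Theorem~\ref{fin} from \cite[Theorem 1.4]{Whang2}, specialized to the subset $\Z\setminus\{\pm2\}\subset\C$; let me indicate why this works in terms of the structure theorems recalled above. First, by Theorem~\ref{dioph} the nondegenerate integral points of $X_k(\Z)$ already constitute finitely many mapping class group orbits, so \emph{a fortiori} do the nondegenerate points lying in $X_k(\Z\setminus\{\pm2\})$; thus the entire issue is to show that the \emph{degenerate} points in $X_k(\Z\setminus\{\pm2\})$ also lie in finitely many orbits. Let $\rho$ be such a point. Since $\tr\rho(a)\neq\pm2$ for every essential curve $a\subset\Sigma$, condition (1) of Theorem~\ref{deg} cannot hold, so $\rho$ must satisfy condition (2): $(g,n,k)\neq(1,1,2)$ and there is a nontrivial pair of pants $\Sigma'\subset\Sigma$ on which $\rho$ restricts to a reducible representation. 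When $3g+n-3$ is smallest (e.g.\ $\Sigma=\Sigma_{1,1}$) there is no nontrivial pair of pants subsurface at all, so $X_k(\Z\setminus\{\pm2\})$ is then purely nondegenerate and Theorem~\ref{dioph} already finishes; see also Section~\ref{sect:6.2}. Hence we may assume $\Sigma$ admits nontrivial pairs of pants.

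Next I would use that the isotopy classes of nontrivial pairs of pants in $\Sigma$ form finitely many mapping class group orbits — they are classified by the homeomorphism type of the pair $(\Sigma,\Sigma')$, i.e.\ by the topological type of the complement $\Sigma''=\overline{\Sigma\setminus\Sigma'}$ together with the gluing pattern. Replacing $\rho$ by a point in its orbit, we may thus fix $\Sigma'$, with $\partial\Sigma'=\{c_1,c_2,c_3\}$, each $c_i$ either essential in $\Sigma$ or a boundary curve of $\Sigma$. Presenting $\pi_1(\Sigma')$ as free on boundary loops $\gamma_1,\gamma_2,\gamma_3$ with $\gamma_1\gamma_2\gamma_3=1$, reducibility of $\rho|_{\Sigma'}$ is equivalent to the cubic relation
\[
x_1^2+x_2^2+x_3^2-x_1x_2x_3-4=0,\qquad x_i=\tr\rho(c_i),
\]
i.e.\ $\tr\rho[\gamma_1,\gamma_2]=2$. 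One then cuts $\Sigma$ along $\partial\Sigma'$ and runs an induction on the complexity $3g+n-3$: the restriction $\rho|_{\Sigma''}$ is an integral point of $X_{k''}(\Sigma'',\SL_2)$, where $k''$ records $k$ together with the traces $x_i$ along the new boundary curves, and $\Sigma''$ has strictly smaller complexity while its essential curves still avoid traces $\pm2$ (an essential curve of $\Sigma''$ is essential in $\Sigma$ unless isotopic to some $c_i$, in which case its trace is one of the integers $x_i$). This descent is precisely what \cite[Theorem 1.4]{Whang2} packages.

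The step I expect to be the main obstacle is controlling this cutting and gluing quantitatively. Unlike $k$, the new boundary data $(x_1,x_2,x_3)$ produced by the reducible representation on $\Sigma'$ is not fixed in advance, so one must (i) observe that $x_i\in\Z\setminus\{\pm2\}$ whenever $c_i$ is essential in $\Sigma$, so that the inductive hypothesis is applicable across the cut, and (ii) show that only finitely many tuples $(x_1,x_2,x_3)$ can actually occur among integral points, so that finiteness of orbits over the resulting bounded family of boundary traces for $\Sigma''$ yields finiteness of orbits for $\Sigma$. Part (ii), which is where the nonlinear-descent estimates of \cite{Whang2} — Markoff-type in the base case $\Sigma_{1,1}$ of Section~\ref{sect:6.2} — enter, is the crux; the remainder is bookkeeping with subsurfaces and trace identities.
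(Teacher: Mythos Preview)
The paper does not give its own proof of Theorem~\ref{fin}; it simply records the statement as ``a corollary of \cite[Theorem 1.4]{Whang2}.'' Your proposal likewise deduces the result from \cite[Theorem 1.4]{Whang2}, so at the level of approach there is nothing to compare: you and the paper invoke the same external theorem.

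Your additional sketch of \emph{why} this should work is reasonable in outline, but one point deserves a caution. Your step (ii) asserts that only finitely many trace tuples $(x_1,x_2,x_3)$ along $\partial\Sigma'$ can occur. As literally stated this is false in general: when all three boundary curves of the pair of pants $\Sigma'$ are essential in $\Sigma$, the reducibility relation $x_1^2+x_2^2+x_3^2-x_1x_2x_3-4=0$ has infinitely many integer solutions with all $x_i\neq\pm2$ (it is the Markoff equation up to a change of variables). The actual mechanism in \cite{Whang2} is not to bound these traces outright but to use the descent to move $\rho$ within its mapping class group orbit until the traces along a suitable system of curves lie in a fixed compact window; finiteness then follows. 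You correctly identify this as the crux and defer it to \cite{Whang2}, so this is an expository wobble in your sketch rather than a gap in the proof you are proposing.
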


Given a loop $\alpha$ on $\Sigma$, we shall denote by $\tr(\alpha)$ the regular function on $X_k$ defined by the traces of representations along $\alpha$: $\rho\mapsto\tr\rho(\alpha)$. 

\subsection{Case $(g,n)=(1,1)$} \label{sect:6.2}
\label{subsec:oneholedtorus}
We specialize to the case $(g,n)=(1,1)$ where $\Sigma$ is a one-holed torus. We refer to \cite[Section 2.3.1]{Whang2} for details and background. Let us fix a hyperelliptic sequence of generators $(\alpha_1,\alpha_2)$ of $\pi_1(\Sigma)$. Writing $x=\tr(\alpha_1)$, $y=\tr(\alpha_2)$, and $z=\tr(\alpha_1\alpha_2)$, it is classical (see e.g.~\cite{Goldman}) that for each $k\in\A^1$ the moduli space $X_k=X_k(\Sigma,\SL_2)$ is the affine cubic surface
\begin{align}
    x^2+y^2+z^2-xyz-2=k.
\end{align}
The mapping class group descent on $X_k$ in this case is classical and can be traced back to the 1880 work of Markoff \cite{markoff}. We record the following.

\begin{theorem}\label{11thm}
We have the following.
\begin{enumerate}
    \item We have $X_2(\Z)=\Gamma(\Sigma)\cdot\{(2,y,y),(-2,y,-y):y\in\Z\}.$
    \item We have
    $X_{-2}(\Z)=\Gamma(\Sigma)\cdot\{(0,0,0),(3,3u,3u):u\in\{\pm1\}\}.$
    \item If $k\neq2$, then $X_k(\Z)$ consists of finitely many mapping class group orbits.
\end{enumerate}
\end{theorem}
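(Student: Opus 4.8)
The plan is to run the classical Markoff descent on the affine cubic $X_k\colon x^2+y^2+z^2-xyz-2=k$, and for part (3) to derive the finiteness statement from Theorem \ref{fin} and Theorem \ref{deg}. I would begin by recalling the mapping class group action on $X_k$ in coordinates. The group $\Gamma(\Sigma)$ for $\Sigma=\Sigma_{1,1}$ acts on the coordinate ring generated by $x,y,z$, and a convenient set of generating moves is the Vieta (Markoff) involution $m_z\colon (x,y,z)\mapsto(x,y,xy-z)$ (the three coordinate permutations of this), together with the sign-change automorphisms $(x,y,z)\mapsto(-x,-y,z)$ and its permutations, and the $S_3$ permuting $x,y,z$. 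These preserve the equation and preserve integrality, so each map $\Z$-points to $\Z$-points.

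For part (3): given $k\neq 2$, by Theorem \ref{fin} the points with all essential-curve traces in $\Z\setminus\{\pm2\}$ already fall into finitely many $\Gamma(\Sigma)$-orbits, so it suffices to handle integral points $\rho\in X_k(\Z)$ for which some essential curve has $\tr\rho(a)=\pm2$. By the change-of-coordinates action of $\Gamma(\Sigma)$ we may assume the curve is $\alpha_1$, i.e.\ $x=\pm2$. Substituting $x=\pm2$ into the cubic gives $(y\mp z)^2=k\mp 4\cdot(\text{something})$ — concretely $x=2$ forces $(y-z)^2=k-2$ and $x=-2$ forces $(y+z)^2=k-2$ — so for fixed $k\neq 2$ there are only finitely many integer values of $y\mp z$, and the Markoff move in the remaining variable then acts on this finite union of ``lines'' $\{y\mp z=\text{const}\}$; a short argument (essentially: the Markoff move is $(y,z)\mapsto(\pm2\cdot y - z)$ up to signs, which on the line $y-z=c$ is a single shift-type map) shows each such line is a single orbit, or one reduces mod the finitely many lines to finitely many orbits. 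Combining with Theorem \ref{fin} gives finiteness of $X_k(\Z)$ in finitely many orbits.

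For parts (1) and (2) one must identify the orbits explicitly. The case $k=2$ (so $\disc p=0$, the ``singular'' Markoff-type surface $x^2+y^2+z^2-xyz-4=0$) is precisely the degenerate case $(g,n,k)=(1,1,2)$ singled out in Theorem \ref{deg}, where the degenerate locus is $\{\tr\rho(a)=\pm2\}$. I would check directly that every integral point satisfies $x$, $y$, or $z\in\{\pm2\}$: if $|x|,|y|,|z|\geq 3$ then the usual Markoff descent (order the triple, apply the Vieta move to the largest coordinate, show the max strictly decreases) forces a contradiction or drives one down to a coordinate of absolute value $\leq 2$, and then the equation $x^2+y^2+z^2-xyz=4$ with $x=\pm2$ pins down the fundamental solutions $(2,y,y)$ and $(-2,y,-y)$; every integral point is $\Gamma(\Sigma)$-equivalent to one of these, giving (1). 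For $k=-2$ (the surface $x^2+y^2+z^2-xyz=0$), the same descent shows every integral point reduces to one with $\max(|x|,|y|,|z|)$ small; checking the finitely many small solutions directly yields the fundamental points $(0,0,0)$ and $(3,3u,3u)$ with $u=\pm1$, establishing (2).

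The main obstacle is the bookkeeping in the two exceptional cases $k=\pm2$: one needs the Markoff descent inequality (that the Vieta move applied to the largest-magnitude coordinate strictly decreases it, \emph{except} at the explicitly listed fundamental solutions) to be stated and verified carefully, including the boundary behavior where two coordinates are equal or where a coordinate equals $\pm2$, and one must confirm that the sign-change and permutation automorphisms glue the a priori several descent-minimal points into the single orbits claimed. Part (3) is comparatively soft once Theorems \ref{fin} and \ref{deg} are invoked; the only real content there is the finiteness of orbits among the ``degenerate'' integral points with a trace equal to $\pm2$, which is the elementary line-analysis sketched above.
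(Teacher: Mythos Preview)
Your approach is correct, and for parts (2) and (3) it matches the paper's proof essentially verbatim: the paper also runs explicit Markoff descent for $k=-2$, and for (3) invokes Theorem~\ref{fin} and then analyzes the conics $\{x=\pm2\}$ directly. One small correction in (3): on the line $\{x=2,\ y-z=c\}$ the Dehn twist along $\alpha_1$ acts by $(y,z)\mapsto(z,2z-y)$, i.e.\ a shift by $c$ in one coordinate, so you get $|c|$ orbits on each line rather than a single orbit---still finitely many, which is all you need.

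For part (1) your route genuinely differs from the paper's. You propose raw Markoff descent on the cubic $x^2+y^2+z^2-xyz=4$; this does work (after sign-normalizing so that $3\le x\le y\le z$, one checks $f(y)=(2-x)y^2+(x^2-4)<0$, hence the Vieta image $z'=xy-z$ satisfies $z'<y$ and the maximum strictly drops), but you should be careful with the sign-normalization step: the even sign changes $(x,y,z)\mapsto(-x,-y,z)$ are not obviously in $\Gamma(\Sigma_{1,1})$, and indeed the statement lists $(2,y,y)$ and $(-2,y,-y)$ as separate orbit types. One avoids this by noting $xyz>0$ whenever $\min|x_i|\ge 3$ and treating the two sign patterns $(+,+,+)$ and $(+,-,-)$ separately. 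The paper instead exploits what is special about $k=2$: this is exactly the reducible locus, so one may take $\rho$ diagonal with eigenvalues $\lambda,\mu$, observe from integrality of $\tr\rho(\alpha_1\alpha_2)$ that $[\Q(\lambda,\mu):\Q]\le 2$, and then run the Euclidean algorithm on the exponent lattice to produce a simple loop with monodromy $\pm\mathbb{I}$. Your argument is more elementary and self-contained; the paper's is shorter and explains conceptually why $k=2$ behaves differently from the generic case treated in~(3).
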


\begin{proof}
(1) First, we note that a representation $\rho\colon\pi_1(\Sigma)\to\SL_2(\C)$ is reducible if and only if $\tr\rho([\alpha_1,\alpha_2])=2$. Thus, the locus $V_2(\C)$ parametrizes the Jordan equivalence classes of reducible representations of $\pi_1(\Sigma)$. Given $\rho\in X_2(\Z)$, it suffices to show that we have $\tr\rho(a)=\pm2$ for some essential curve $a\subset\Sigma$. Let us assume without loss of generality that $\rho$ is diagonal, and write
$$\rho(\alpha_1)=\begin{bmatrix}\lambda & 0\\ 0 & \lambda^{-1}\end{bmatrix},\quad\text{and}\quad \rho(\alpha_2)=\begin{bmatrix}\mu & 0\\ 0 & \mu^{-1}\end{bmatrix}.$$
Note that $\lambda$ and $\mu$ are each either $\pm1$ or an algebraic integer of degree $2$, by our hypothesis that $\rho\in X_2(\Z)$. If $[\Q(\lambda,\mu):\Q]=4$, then the conjugates of $\lambda\mu$ are $\lambda\mu$, $\lambda^{-1}\mu$, $\lambda\mu^{-1}$, and $\lambda^{-1}\mu^{-1}$. But since $\tr\rho(\alpha_1\alpha_2)=\lambda\mu+\lambda^{-1}\mu^{-1}\in\Z$, this implies that $\lambda\mu^{-1}=\lambda\mu$ or $\lambda^{-1}\mu^{-1}$ and hence $\lambda\in\Z$ or $\mu\in\Z$, contradicting the hypothesis on degree of $\Q(\lambda,\mu)$. It follows that we must have $d=[\Q(\lambda,\mu):\Q]\leq 2$. If $d=1$, then we are done, so assume $d=2$. It is then easy to see that, up to mapping class group action (essentially equivalent to the Euclidean algorithm), there exists a nonseparating simple loop $\alpha$ such that $\rho(\alpha)=\pm\mathbb{I}$. This gives the desired result.

(2) This is classical; we briefly sketch the derivation. Let $(x,y,z)\in X_{-2}(\Z)$ be an integral solution to the equation
$$x^2+y^2+z^2-xyz=0.$$
Up to $\Gamma(\Sigma)$-action, one can assume that $(x,y,z)$ satisfies
$$|x|\leq|y|\leq|z|\leq|xy-z|\quad\text{or}\quad|x|\leq|z|\leq|y|\leq|xz-y|.$$
We will assume the first case; the second case can be treated similarly. Now, if $x=0$ then we must have $y=z=0$ from the equation for $X_{-2}$. So let us assume $x\neq0$. If $|x|\leq 2$, then the binary form $q(y,z)=y^2+z^2-xyz$ is positive semidefinite, so
$$y^2+z^2-xyz=q(y,z)=-x^2$$
admits no integral solution. So let us assume that $3\leq |x|$. From the equation defining $X_{-2}$, we have
$$|z||xy-z|=|x^2+y^2|\leq 2|y|^2.$$
If $|xy|\leq2|z|$, then
$$\frac{|xy|^2}{4}\leq |z|^2\leq 2|y|^2\implies |x|^2\leq 8,$$
a contradiction; so we must have $|xy|>2|z|$. But then
$$\frac{|xyz|}{2}< |z||xy-z|\leq 2|y|^2\implies|x|<4,$$
so that we have $x=\pm3$. Let us assume that $x=3$. From $2|z|<|xy|=3|y|$ it follows that
$$3|y|^2\leq3|yz|=9+y^2+z^2\leq 9+2|y|^2\implies |y|^2\leq 9.$$
Since $3=|x|\leq|y|$, it follows that $y=\pm3$. Substituting $(3,\pm3)$ for $(x,y)$ in the equation for $X_{-2}$ we find $z=\pm3$, with the sign of $z$ agreeing with that of $y$. Finally, the case $x=-3$ similarly leads to $(y,z)=(-3,3)$ or $(3,-3)$. A suitable composition of Dehn twists gives rise to a cyclic permutation of coordinates, so we may makes arrangements so that $x=3$.

(3) It follows by Theorem \ref{fin} (alternatively, see \cite[Section 4.2]{Whang2} for an elementary argument) that $X_k(\Z\setminus\{\pm2\})$ decomposes into finitely many mapping class group orbits. It is thus enough to show that the set of integral points $(x,y,z)\in X_k(\Z)$ satisfying $x=\pm2$ is contained in finitely many $\Gamma(\Sigma)$-orbits. For this, note first that the intersection of $X_k$ with the plane $x=2$ (resp.~$x=-2$) gives a degenerate conic
$$(y-z)^2=k-2\quad(\text{resp.~$(y+z)^2=k-2$)}$$
which is a union of two disjoint parallel lines (note $k\neq2$ by hypothesis).
Dehn twist along the curve underlying $\alpha_1$ induces an automorphism of the conic given by $(y,z)\mapsto (z,2z-y)$ (resp.~$(y,z)\mapsto(z,-2z-y)$). Under the group generated by this automorphism, any real point on the conic can be brought into a compact subset of $\R^2$. This shows that the integral points on the conic belong to finitely many $\Gamma(\Sigma)$-orbits, and we are done.
\end{proof}

\begin{remark}
If we write
$$s=\begin{bmatrix}1 & x & z\\ 0 & 1 & y\\ 0 & 0 &1\end{bmatrix}=\begin{bmatrix}1 & \tr(\alpha_1) & \tr(\alpha_1\alpha_2)\\ 0 & 1 & \tr(\alpha_2)\\ 0 & 0 &1\end{bmatrix},$$
then each $X_k$ can be embedded as a closed subvariety of the affine space of unipotent upper triangular matrices given by the following condition:
$$\det(\lambda+s^{-1}s^T)=(\lambda+1)(\lambda^2-k\lambda+1).$$
\end{remark}

\subsection{Lemma for $(g,n)=(0,4)$}\label{sect:6.3}
We refer to \cite[Section 2.3.2]{Whang2} for details and background. Let $\Sigma$ be a surface of type $(0,4)$, with boundary curves $c_1,\dots,c_4$. Here, we will be interested in $\SL_2(\C)$-local systems on $\Sigma$ whose boundary traces along $c_3$ and $c_4$ are the same. In Secttion \ref{sect:6.4}, such local systems will arise by restriction from $\SL_2(\C)$-local systems on a surface of type $(1,2)$ by cutting the surface along a nonseparating essential curve.

Fix numbers $k_1,k_2,t\in\C$, and let $k=(k_1,k_2,t,t)$. Let $(\gamma_1,\gamma_2,\gamma_3,\gamma_4)$ be the generating sequence for $\pi_1(\Sigma)$ given by simple loops around the boundary curves $(c_1,c_2,c_3,c_4)$ respectively. Let $X,Y,Z$ respectively be underlying curves of simple loops on $\Sigma$ homotopic to $\gamma_1\gamma_2$, $\gamma_2\gamma_3$, and $\gamma_1\gamma_2$, respectively, and let $x=\tr X$, $y=\tr Y$, and $z=\tr Z$. Then $X_k=X_k(\Sigma,\SL_2)$ is the affine cubic surface
$$x^2+y^2+z^2+xyz=(t^2+k_1k_2)x+t(k_1+k_2)(y+z)+4-2t^2-k_1^2-k_2^2-t^2k_1k_2.$$
(See \cite{Goldman} for details.) Given a simple closed curve $a\subset\Sigma$, we shall denote by $\tau_a\in\Gamma(\Sigma)$ the (left) Dehn twist along $a$. We record the following.

\begin{lemma}\label{dehnlem}
Assume that $k_1+k_2,k_1k_2\in\Z$, and let $d_k=(k_1-k_2)^2\in\Z$.
\begin{enumerate}
    \item Suppose $t\neq\pm2$. If $k_1\neq k_2$, then the set of integral points on the curve $\{x=2\}\subset X_k$ consists of finitely many $\langle\tau_X\rangle$-orbits. If moreover $(t^2-4)d_k$ is not a perfect square, then the said curve has no integral point.
    \item Suppose $t\neq \pm2$. If $k_1\neq -k_2$, then the set of integral points on the curve $\{x=-2\}\subset X_k$ consists of finitely many $\langle\tau_X\rangle$-orbits.
    \item Suppose $t=2u$ for some $u\in\{\pm1\}$. If $k_1,k_2\notin\{\pm2\}$, then the set of integral points on the curve $\{z=\pm2\}\subset X_k$ consists of finitely many $\langle\tau_Z\rangle$-orbits.
\end{enumerate}
\end{lemma}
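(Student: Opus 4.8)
The plan is to mimic the proof of Theorem~\ref{11thm}(3): in each case we intersect the cubic $X_k$ with the relevant coordinate plane ($\{x=2\}$, $\{x=-2\}$, or $\{z=\pm2\}$), identify the resulting plane conic, and observe that the relevant Dehn twist preserves this conic and restricts on it to a translation of an affine line. Since a nonzero translation of $\A^1$ has a bounded fundamental domain and the integral points are discrete (they have integral coordinates), this gives finitely many orbits. Throughout I use the classical fact — the same mechanism already exploited in the proof of Theorem~\ref{11thm}(3), cf.\ \cite{Goldman} — that $\tau_X$ (resp.\ $\tau_Z$) fixes the coordinate $x$ (resp.\ $z$) and restricts on each slice $\{x=\mathrm{const}\}$ (resp.\ $\{z=\mathrm{const}\}$), up to a nonzero power, to the composition of the two elementary ``Vieta'' involutions in the remaining two trace coordinates (each sending a coordinate to the other root of the quadratic that cuts out the surface). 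I also use that Dehn twists carry integral points to integral points, being mapping classes.

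\emph{Case (1).} Substituting $x=2$ and writing $w=y+z$, the cubic collapses to the single equation $w^2-t(k_1+k_2)w+(k_1-k_2)^2+t^2k_1k_2=0$, whose discriminant is exactly $(t^2-4)d_k$. Since $t\neq\pm2$ and $k_1\neq k_2$ this is nonzero, so $\{x=2\}\cap X_k$ is a disjoint union of two parallel lines $L_\pm=\{x=2,\ y+z=w_\pm\}$. A direct computation shows the Vieta composite in $y$ and $z$ preserves each $L_\pm$ and acts there, in the parameter $y$, as the translation $y\mapsto y\pm\sqrt{(t^2-4)d_k}$. If $L_\pm$ carries an integral point, $\tau_X$ sends it to another integral point, so the translation length — a nonzero rational multiple of $\sqrt{(t^2-4)d_k}$ — must be a nonzero integer; this simultaneously yields the second assertion (no integral points unless $(t^2-4)d_k$ is a perfect square) and, in the remaining case, shows that the integral points of each of the two lines form finitely many $\langle\tau_X\rangle$-orbits.

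\emph{Cases (2) and (3).} Here the plane section is a parabola rather than a pair of lines, but the argument is the same. For (2), putting $x=-2$, $u=y-z$, $w=y+z$, the cubic becomes $u^2=t(k_1+k_2)w-(k_1+k_2)^2-t^2k_1k_2-4t^2$. If $t(k_1+k_2)=0$ then, since $k_1\neq-k_2$, we must have $t=0$, whence $u^2=-(k_1+k_2)^2<0$ has no real — hence no integral — point and there is nothing to prove; otherwise the section is a parabola, an affine line with coordinate $u$, on which the Vieta composite in $y,z$ acts by $u\mapsto u-2t(k_1+k_2)$, which is nonzero and (by the argument of case (1), if any integral point exists) an integer, giving finitely many $\langle\tau_X\rangle$-orbits. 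For (3), write $t=2\epsilon$ with $\epsilon\in\{\pm1\}$ and set $A=4+k_1k_2$, $B=2\epsilon(k_1+k_2)$. Substituting $z=2$, the section is cut out in the coordinates $p=x+y$, $q=x-y$ by an equation of the form $p^2+4=\tfrac{A+B}{2}\,p+\tfrac{A-B}{2}\,q+\mathrm{const}$; since $A-B=(k_1-2\epsilon)(k_2-2\epsilon)\neq0$ (as $k_1,k_2\notin\{\pm2\}$) this is a parabola with coordinate $p$, on which $\tau_Z$ acts by translation by the nonzero integer $(k_1-2\epsilon)(k_2-2\epsilon)=k_1k_2-2\epsilon(k_1+k_2)+4$. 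The case $z=-2$ is identical with the roles of $p$ and $q$ interchanged and translation length $(k_1+2\epsilon)(k_2+2\epsilon)\neq0$. In every subcase a nonzero translation of $\A^1$ gives finitely many orbits on the integral points.

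The parts requiring genuine care are: correctly reducing each plane section to the normal form above (we have suppressed the constant terms, which must be tracked honestly) and verifying that $\tau_X$, resp.\ $\tau_Z$, really does restrict to the stated Vieta composite on each slice — this can be read off from the standard description of the mapping class group action on the relative $\SL_2$-character variety of $\Sigma_{0,4}$ in trace coordinates \cite{Goldman}, or checked directly on $\pi_1(\Sigma)$ using that $\tau_X$, $\tau_Z$ fix the corresponding free homotopy classes. The mathematical content, exactly as in the $(1,1)$ case, is the bookkeeping fact that under the stated hypotheses the translation length never degenerates to $0$, so that no infinite orbit of integral points can be concealed in a plane section.
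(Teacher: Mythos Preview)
Your proof is correct and follows essentially the same approach as the paper's: intersect with the relevant plane, identify the resulting conic (a pair of parallel lines in~(1), a non-degenerate parabola in~(2) and~(3)), and use that $\tau_X$ (resp.\ $\tau_Z$) acts on each component by a nonzero affine translation to conclude finiteness. The only noteworthy difference is in the second assertion of~(1): the paper observes directly that $y+z$ is an integer root of the integer-coefficient quadratic $F(W)=W^2-t(k_1+k_2)W+(k_1-k_2)^2+t^2k_1k_2$, so nonexistence follows immediately from $\disc(F)=(t^2-4)d_k$ not being a perfect square, whereas you deduce it from the translation length being forced to be an integer; both arguments work, the paper's being slightly more direct.
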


\begin{proof}
(1) The curve $C=\{x=0\}\subset X_k$ can be viewed as a degenerate conic curve in $\A_{y,z}^2$ given by $F(y+z)=0$ where
$$F(W)=W^2-t(k_1+k_2)W-[2(t^2+k_1k_2)+4-2t^2-k_1^2-k_2^2-t^2k_1k_2].$$
By elementary geometry and the description of Dehn twist $\tau_X$ given in \cite[Section 2.3.2]{Whang2}, we see that $C(\Z)/\langle\tau_X\rangle$ is finite unless $C\subset\A_{y,z}^2$ is a (nonreduced) line in $\A_{y,z}^2$; this occurs precisely when $F$ has zero discriminant (cf.~proof of Theorem \ref{11thm}(3)). But we have
$$\disc(F)=t^2(k_1+k_2)^2+4[2(t^2+k_1k_2)+4-2t^2-k_1^2-k_2^2-t^2k_1k_2]=(t^2-4)d_k$$
which is nonzero by our hypotheses. If moreover $(t^2-4)d_k$ is not a perfect square, then $C(\Z)$ is empty since $F(W)=0$ admits no rational solution. The result follows.

(2) The curve $C=\{x=-2\}\subset X_k$ can be viewed as a conic curve in $\A_{y,z}^2$ given by the equation
$$(y-z)^2-t(k_1+k_2)(y-z)+[2(t^2+k_1k_2)+2t^2+k_1^2+k_2^2+t^2k_1k_2]=2t(k_1+k_2)z.$$
If $t\neq0$, then this equation defines a non-vertical and non-horizontal parabola since $k_1+k_2\neq0$ by hypothesis, whence $C(\Z)/\langle\tau_X\rangle$ is finite by elementary geometry and the description of Dehn twist $\tau_X$ given in \cite[Section 2.3.2]{Whang2}. If $t=0$, then the equation becomes
$$(y-z)^2+(k_1+k_2)^2=0$$
which admits no integer solutions since $k_1+k_2\neq0$. The result follows.

(3) Let $v\in\{\pm1\}$. The curve $C_{v}=\{z=2v\}\subset X_k$ can be viewed as a conic curve in $\A_{x,y}$ given by the equation
\begin{align*}
    x^2+y^2+2vxy&=(t^2+k_1k_2)(x+vy)+[t(k_1+k_2)-v(t^2+k_1k_2)]y\\
    &\quad +2vt(k_1+k_2)-2t^2-k_1^2-k_2^2-t^2k_1k_2.
\end{align*}
Note that $t(k_1+k_2)-v(t^2+k_1k_2)=-v(t-vk_1)(t-vk_2)\neq0$ by our hypothesis, so the above equation defines a non-vertical and non-horizontal parabola, whence $C_v(\Z)/\langle\tau_Z\rangle$ is finite by elementary geometry and the description of Dehn twist $\tau_Z$ given in \cite[Section 2.3.2]{Whang2}.
\end{proof}

\subsection{Case $(g,n)=(1,2)$ and proof of Theorem \ref{mainthm2}}\label{sect:6.4}
We specialize to the case $(g,n)=(1,2)$ where $\Sigma$ is a two-holed torus. Let us first recall the following explicit presentation of the moduli space $X_k$ given in \cite[Section 5.3]{Goldman}. Consider the presentation
$$\pi_1(\Sigma_{1,2})=\langle K_1,K_2,U,X,Y|K_1=UXY,\: K_2=UYX\rangle,$$
and define $V=UX$, $W=UY$, and $Z=XY$. Let us write the trace functions of loops by corresponding lower-case letters (e.g.~$u=\tr U$). For $k=(k_1,k_2)\in\C^2$, we have the presentation of $X_k$ as the four-dimensional subvariety of $\A_{u,v,w,x,y,z}^6$ given by
\begin{align*}
    k_1+k_2 & =yv+xw+zu-uxy\\
    k_1k_2 &=x^2+y^2+u^2+v^2+w^2+z^2-xyz-yuw-uxv+vwz-4.
\end{align*}
Let us fix a hyperelliptic sequence of generators $(\alpha_1,\alpha_2,\alpha_3)$ of $\pi_1(\Sigma)$ in such a way that $U=\alpha_1$, $X=\alpha_2^{-1}$, and $Y=\alpha_2\alpha_3$.
Let us write
\begin{align*}
s=
\begin{bmatrix}
1 & a & e & d\\
0 & 1 & b & f\\
0 & 0 & 1 & c\\
0 & 0 & 0 & 1
\end{bmatrix}=\begin{bmatrix}
1 & \tr(\alpha_1) & \tr(\alpha_1\alpha_2) & \tr(\alpha_1\alpha_2\alpha_3)\\
0 & 1 & \tr(\alpha_2) & \tr(\alpha_2\alpha_3)\\
0 & 0 & 1 & \tr(\alpha_3)\\
0 & 0 & 0 & 1
\end{bmatrix}
\end{align*}
Then in terms of the coordinates above, we have
$$u=a, \quad x=b, \quad y=f,\quad v=ab-e,\quad w=d,\quad \text{and}\quad z=c.$$
This leads to the following.

\begin{proposition}\label{prop:bdytracepoly}
For $k=(k_1,k_2)\in\A^2(\C)$, we have a presentation of $X_k$ as the subvariety of the affine space of $4\times 4$ unipotent upper triangular matrices by
\begin{align*}
k_1+k_2&=ac+bd-ef\\
    k_1k_2&=a^2+b^2+c^2+d^2+e^2+f^2-abe-adf-bcf-cde+abcd-4.
\end{align*}
Using the matrix variable $s$ above, note that we have
\begin{align*}
    \det(\lambda+s^{-1}s^T)=\lambda^4-k_1k_2\lambda^3+(k_1^2+k_2^2-2)\lambda^2-k_1k_2\lambda+1.
\end{align*}
The discriminant of the above polynomial is $\Delta_k=(k_1^2-4)^2(k_2^2-4)^2(k_1^2-k_2^2)^2$.
\end{proposition}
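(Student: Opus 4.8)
The plan is to establish the three assertions in turn, each reducing to something already in hand. For the displayed pair of equations cutting out $X_k$, I would transport Goldman's presentation of $X_k(\Sigma_{1,2},\SL_2)$ in the coordinates $(u,v,w,x,y,z)$ along the change of variables coming from the chosen generators. From $U=\alpha_1$, $X=\alpha_2^{-1}$, $Y=\alpha_2\alpha_3$ one has $V=UX=\alpha_1\alpha_2^{-1}$, $W=UY=\alpha_1\alpha_2\alpha_3$, and $Z=XY=\alpha_3$, so the $\SL_2$ trace identities $\tr(g^{-1})=\tr(g)$ and $\tr(gh^{-1})=\tr(g)\tr(h)-\tr(gh)$ give $u=a$, $x=b$, $y=f$, $w=d$, $z=c$, and $v=ab-e$ in terms of the entries of $s$. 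This assignment is a polynomial automorphism of $\A^6$ (its inverse reads off $a,b,c,d,f$ directly and sets $e=ab-v$), so it carries the two Goldman relations to two defining equations for $X_k$ in the variables $a,\dots,f$; expanding and collecting monomials yields precisely the stated pair. The characteristic-polynomial identity is then immediate from Example~\ref{r4example} — equivalently, from Proposition~\ref{proposition:coxeterStokes} together with the case $r=4$ of Proposition~\ref{prop:chekhovmazzocco}(2) — since the quantities $k_1+k_2$ and $k_1k_2$ occurring there are pinned down by exactly the two identities just obtained, so that $k=(k_1,k_2)$ is indeed the boundary monodromy datum.

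The one step requiring genuine computation is the discriminant. The polynomial $p_k(\lambda):=\lambda^4-k_1k_2\lambda^3+(k_1^2+k_2^2-2)\lambda^2-k_1k_2\lambda+1$ is reciprocal, and writing $\mu_1,\mu_2$ for the roots of $\lambda^2-k_1\lambda+1$ and $\nu_1,\nu_2$ for the roots of $\lambda^2-k_2\lambda+1$ (so $\mu_1\mu_2=\nu_1\nu_2=1$, $\mu_1+\mu_2=k_1$, $\nu_1+\nu_2=k_2$), one checks — this is the factorization underlying the linear-algebra lemma in Section~\ref{sect:5} — that the four roots of $p_k$ are the products $\mu_i\nu_j$, $i,j\in\{1,2\}$. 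I would then expand $\Delta_k=\prod(\mu_i\nu_j-\mu_{i'}\nu_{j'})^2$ over the six unordered pairs of roots; grouping the factors, the product of the six differences collapses, using $\mu_1\mu_2=\nu_1\nu_2=1$, to $(\mu_1-\mu_2)^2(\nu_1-\nu_2)^2\bigl[(\mu_1^{2}+\mu_1^{-2})-(\nu_1^{2}+\nu_1^{-2})\bigr]$. Since $(\mu_1-\mu_2)^2=k_1^2-4$, $(\nu_1-\nu_2)^2=k_2^2-4$, $\mu_1^{2}+\mu_1^{-2}=k_1^2-2$ and $\nu_1^{2}+\nu_1^{-2}=k_2^2-2$, the bracketed factor equals $k_1^2-k_2^2$, and squaring gives $\Delta_k=(k_1^2-4)^2(k_2^2-4)^2(k_1^2-k_2^2)^2$.

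I do not anticipate a serious obstacle. In the substitution step the only point to watch is the inverse in $X=\alpha_2^{-1}$, which makes $Z=XY$ equal $\alpha_3$ rather than $\alpha_2\alpha_3$ and is needed for the signs in the final relations to come out right; the rest is bookkeeping. The discriminant is a clean symmetric-function manipulation, but it does hinge on choosing the right partition of the six difference-factors, so the mild risk there is an arithmetic slip — a more robust alternative is to pass to the resolvent quadratic $\mu^2-k_1k_2\mu+(k_1^2+k_2^2-4)$ for $\mu=\lambda+\lambda^{-1}$, whose discriminant is $(k_1^2-4)(k_2^2-4)$, factor $p_k=(\lambda^2-\mu_1\lambda+1)(\lambda^2-\mu_2\lambda+1)$, and apply $\disc(fg)=\disc(f)\,\disc(g)\,\Res(f,g)^2$ with $\disc(\lambda^2-\mu_i\lambda+1)=\mu_i^2-4$, $\Res(f,g)=(\mu_1-\mu_2)^2$, and the elementary identity $(\mu_1^{2}-4)(\mu_2^{2}-4)=(k_1^2-k_2^2)^2$.
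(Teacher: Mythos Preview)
Your proposal is correct and follows the paper's own route: the paper states the proposition immediately after recording the substitution $u=a$, $x=b$, $y=f$, $v=ab-e$, $w=d$, $z=c$ coming from $U=\alpha_1$, $X=\alpha_2^{-1}$, $Y=\alpha_2\alpha_3$, and leaves the verification (and the discriminant) to the reader, while the characteristic polynomial is exactly Example~\ref{r4example}. You carry out the same substitution and in addition supply a clean discriminant computation via the factorization $p_k(\lambda)=\prod_{i,j}(\lambda-\mu_i\nu_j)$; the paper simply asserts the discriminant formula without argument, so your write-up is strictly more detailed but not a different method.
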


We will prove the following strengthening of Theorem \ref{dioph}.

\begin{theorem}\label{refinement}
Fix $k\in\A^2(\C)$ with $\Delta_k\neq0$. Then $X_k(\Sigma_{1,2},\SL_2)(\Z)$ contains at most finitely many integral $\Gamma(\Sigma)$-orbits.
\end{theorem}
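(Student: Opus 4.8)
The plan is to reduce Theorem~\ref{refinement} to the Diophantine results already available for the one-holed torus (Theorem~\ref{11thm}) and the four-holed sphere (Lemma~\ref{dehnlem}), by cutting $\Sigma=\Sigma_{1,2}$ along essential simple closed curves. First, by Theorem~\ref{fin} the set $X_k(\Z\setminus\{\pm2\})$ is already a finite union of $\Gamma(\Sigma)$-orbits, and every $\rho\in X_k(\Z)$ outside it satisfies $\tr\rho(a)=\pm2$ for some essential simple closed curve $a$; so it suffices to show that $E=\{\rho\in X_k(\Z):\tr\rho(a)=\pm2\text{ for some essential }a\subset\Sigma\}$ meets finitely many $\Gamma(\Sigma)$-orbits. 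The essential simple closed curves on $\Sigma_{1,2}$ form finitely many $\Gamma(\Sigma)$-orbits, split into \emph{non-separating} curves and \emph{separating} curves (the latter cutting $\Sigma$ into a one-holed torus and a pair of pants); fixing a representative $a$ in each orbit, since $E$ is the $\Gamma(\Sigma)$-saturation of the finitely many sets $E_a=\{\rho\in X_k(\Z):\tr\rho(a)=\pm2\}$, it is enough to bound the number of $\stab_\Gamma(a)$-orbits in each $E_a$.

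\textbf{Non-separating case.} Write $S=\Sigma_{1,2}\setminus a\simeq\Sigma_{0,4}$, so $\rho|_S$ lies in $X_{(k_1,k_2,t,t)}(S)$ with $t=\tr\rho(a)\in\{\pm2\}$. The first use of $\Delta_k\neq0$: one has $\rho(a)\neq\pm I$, since collapsing $a$ would otherwise present $\rho|_S$ as a representation forcing $k_1=\pm k_2$. Hence the centralizer of $\rho(a)$ is one-dimensional, the boundary Dehn twist $\tau_a$ acts by translation on the one-parameter gluing datum reconstructing $\rho$ from $\rho|_S$, and integrality of the traces of curves meeting $a$ confines that datum to finitely many $\langle\tau_a\rangle$-orbits once $\rho|_S$ is pinned down. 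It remains to bound $\rho|_S$ up to $\Gamma(S)$: since $\rho|_S$ is an integral point of $X_{(k_1,k_2,t,t)}(S)$, Theorem~\ref{fin} reduces us to the case where $\rho|_S$ lies on $\{x=\pm2\}$, $\{y=\pm2\}$, or $\{z=\pm2\}$ for the three interior curves of $S$. The curves underlying $y$ and $z$ are non-separating in $\Sigma_{1,2}$, and Lemma~\ref{dehnlem}(3) (whose hypotheses $k_1,k_2\neq\pm2$ are part of $\Delta_k\neq0$; the two cases being interchanged by the involution of $S$ swapping the two trace-$t$ boundary components, which lies in $\stab_\Gamma(a)$) confines them to finitely many orbits of Dehn twists lying in $\stab_\Gamma(a)$; the curve underlying $x$ is separating in $\Sigma_{1,2}$, which returns us to the separating case treated below.

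\textbf{Separating case and conclusion.} Here $\Sigma_{1,2}=T\cup_a P$ with $T\simeq\Sigma_{1,1}$ and $P\simeq\Sigma_{0,3}$. Again $\rho(a)\neq\pm I$ (otherwise $\rho|_P$ forces $k_1=\pm k_2$), so the gluing along $a$ contributes only finitely many $\langle\tau_a\rangle$-orbits once $\rho|_T$ and $\rho|_P$ are fixed; and $\rho|_P$ is determined up to finitely many conjugacy classes by its boundary traces $(t,k_1,k_2)$. For $\rho|_T\in X_t(\Sigma_{1,1})(\Z)$, invoke Theorem~\ref{11thm}: if $\rho|_T$ meets an essential curve of $\Sigma_{1,1}$ with trace $\pm2$, then—that curve being non-separating in $\Sigma_{1,2}$—we are in the non-separating case treated above; otherwise $\rho|_T\in X_t(\Sigma_{1,1})(\Z\setminus\{\pm2\})$, which is empty when $t=2$ (by Theorem~\ref{11thm}(1) every integral point of $X_2(\Sigma_{1,1})$ is $\Gamma$-equivalent to one with $\tr(\alpha_1)=\pm2$, hence has an essential curve of trace $\pm2$) and a finite union of $\Gamma(\Sigma_{1,1})$-orbits when $t=-2$ (Theorem~\ref{11thm}(2)). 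Assembling the cases yields the theorem.

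\textbf{The main obstacle.} The delicate point is to run this web of reductions without circularity and to carry out the reconstruction bookkeeping cleanly: a single integral $\rho$ may lie on several intersecting ``trace $=\pm2$'' loci, and cutting along a separating curve can re-expose a non-separating trace-$\pm2$ curve inside the torus piece, and conversely. One must therefore order the reductions by a complexity measure and, at the residual ``doubly degenerate'' configurations—for instance $\rho|_S$ on $\{x=\pm2\}$ with $t=\pm2$, which Lemma~\ref{dehnlem}(1)--(2) does not directly cover—fall back on the explicit orbit descriptions of Theorem~\ref{11thm} together with $\Delta_k\neq0$. It is exactly here that the full hypothesis is used: $k_1\neq\pm k_2$ excludes central boundary monodromy along every essential curve (forcing the gluing data to behave generically), while $k_1,k_2\neq\pm2$ matches the hypotheses of Lemma~\ref{dehnlem}(3); when $\Delta_k=0$ the relevant conics degenerate to lines carrying infinitely many orbits of integral points, so this restriction is essential to the method.
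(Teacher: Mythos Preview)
Your overall architecture---reduce via Theorem~\ref{fin} to the locus where some essential curve has trace $\pm2$, then cut and invoke Theorem~\ref{11thm} and Lemma~\ref{dehnlem}---matches the paper's. But the case split you propose has a genuine circularity that you flag in your final paragraph without actually resolving. Concretely: in your separating case with $\tr\rho(a)=2$, Theorem~\ref{11thm}(1) shows that every integral point of $X_2(\Sigma_{1,1})$ lies in the $\Gamma(\Sigma_{1,1})$-orbit of some $(\pm2,y,\pm y)$ with $y\in\Z$, so you forward \emph{all} such $\rho$ to the non-separating case. But there, cutting along the non-separating curve of trace $\pm2$ produces $\rho|_S$ lying on $\{x=\pm2\}$ (the $x$-curve being exactly the original separating $a$), with boundary parameter $t=\pm2$; Lemma~\ref{dehnlem}(1)--(2) explicitly excludes $t=\pm2$, the relevant conic degenerates to a double line, and you send the point back to the separating case. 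Since $X_2(\Sigma_{1,1})(\Z)$ has \emph{infinitely} many $\Gamma(\Sigma_{1,1})$-orbits (one for each $y\in\Z$), ``falling back on Theorem~\ref{11thm}'' does not bound anything, and you propose no complexity measure that decreases under both passes.

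The paper breaks this circularity by handling the separating cases (its Cases~I and II) \emph{directly}, and only then running the non-separating analysis (Case~III) under the added hypothesis that no separating curve has trace $\pm2$. The ingredient you are missing sits in Case~I: after normalizing $\rho|_T$ to $(2\sigma,t,t\sigma)$ via Theorem~\ref{11thm}(1), the paper proves a separate Claim that the free parameter $t=\tr\rho(\alpha_2)$ is confined to a finite subset of $\Z\setminus\{\pm2\}$. The argument is arithmetic rather than topological: using integrality of traces along further simple loops involving $\alpha_3$, one shows $k_1\in\Z[\lambda]$ where $\lambda+\lambda^{-1}=t$, whence $t^2-4=\disc(\Z[\lambda])$ divides $\disc(\Z[k_1])$, which bounds $t$. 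Only after this does the paper cut along the curve underlying $\alpha_2$ (now guaranteed to have trace $t\neq\pm2$) and invoke Lemma~\ref{dehnlem}(1). This discriminant bound is the missing idea; without it the doubly-degenerate configuration you isolate cannot be disposed of, and the argument does not close.
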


\begin{proof}
Let $\rho\in X_k(\Z)$ be given. It is known by Theorem \ref{dioph} that $X_k(\Z\setminus\{\pm2\})$ decomposes into finitely many $\Gamma(\Sigma)$-orbits, so we may assume that there is an essential curve in $\Sigma$ whose trace is $\pm2$ under $\rho$. The following cases occur:
\begin{itemize}
    \item \textbf{Case I}. There is a separating essential curve $a\subset\Sigma$ such that $\tr\rho(a)=2$.
    \item \textbf{Case II.} There is a separating essential curve $a\subset\Sigma$ such that $\tr\rho(a)=-2$.
    \item \textbf{Case III.} There is no separating essential curve with trace $\pm2$ under $\rho$, but there is a nonseparating curve $a\subset\Sigma$ such that $\tr\rho(a)=\pm2$.
\end{itemize}
In the remainder of the proofs, we will treat the cases separately. Throughout the proof, let $(\alpha_1,\alpha_2,\alpha_3)$ be the hyperelliptic sequence of generators for $\pi_1(\Sigma)$ we fixed.\\

\noindent\textbf{Case I.} Suppose $a\subset\Sigma$ is a separating essential curve such that $\tr\rho(a)=2$. Let us write $\Sigma|a=\Sigma_1\sqcup\Sigma_2$ where $\Sigma_1$ is a surface of type $(1,1)$ and $\Sigma_2$ is a surface of type $(0,3)$. We may assume up to $\Gamma(\Sigma)$-action that $\Sigma_1$ is the tubular neighborhood of the union of the images of the loops $\alpha_1$ and $\alpha_2$. By Theorem \ref{11thm}(1), we may assume that $\rho\in X_k$ belongs to an algebraic curve $C_{\sigma,t}\subset X_k$ that consists of points of the form
$$\begin{bmatrix}1 & 2\sigma & t\sigma & *\\ 0 & 1 & t & * \\ 0 & 0 & 1 & *\\ 0 & 0 & 0 & 1\end{bmatrix}$$
for some $\sigma\in\{\pm1\}$ and $t\in\Z$. We claim that we must have $C_{\sigma,t}(\Z)=\emptyset$ except for $t$ in a finite subset $S\subset\Z\setminus\{\pm2\}$.

\begin{nclaim}
There is a finite set $S\subset\Z\setminus\{\pm2\}$ such that $C_{\sigma,t}(\Z)=\emptyset$ for any $t\notin S$.
\end{nclaim}

\begin{proof}[Proof of Claim]
Suppose $\rho\in C_{\sigma,t}(\Z)$. We first note that $t\neq\pm2$. Indeed, otherwise $\rho|\Sigma_1$ is abelian, whence the monodromy of $\rho$ along $b=\del\Sigma$ must be the identity (and not just of trace $2$). But then $k_1=\tr\rho(c_1)=\tr\rho(c_2)=k_2$, contradicting our hypothesis on $k$. Thus, $\rho(\alpha_2)$ must be diagonalizable. By the same argument, we see that $\rho(\alpha_1)$ cannot be $\pm\mathbb{I}$. Thus, up to global conjugation by an element of $\SL_2(\C)$, we may assume that
$$\rho(\alpha_1)=\begin{bmatrix}1 & 1\\0 & 1\end{bmatrix},\quad\rho(\alpha_2)=\begin{bmatrix}\lambda & 0\\ 0 & \lambda^{-1}\end{bmatrix},\quad\text{and}\quad\rho(\alpha_3)=\begin{bmatrix}x_{11} & x_{12}\\ x_{21} & x_{22}\end{bmatrix}$$
for some quadratic unit $\lambda\neq\pm1$ such that $\lambda+\lambda^{-1}=t$ and some $x_{ij}\in\C$. Note that
\begin{align*}
    \tr\rho(\alpha_2\alpha_3)&=\lambda x_{11}+\lambda^{-1}x_{22}\in\Z\quad\text{and}\\
    \tr\rho(\alpha_1\alpha_2\alpha_3)&=\lambda x_{11}+\lambda^{-1}x_{22}+\lambda^{-1}x_{21}\in\Z
\end{align*}
which shows that $\lambda^{-1}x_{21}\in\Z$, whence $x_{21}=m\lambda$ for some $m\in\Z$. Now, note that
$$\tr\rho(\alpha_1\alpha_3)=\tr\rho(\alpha_3)+x_{21}\in\Z$$
since $\alpha_1\alpha_3$ is (homotopic to) a simple loop whose underlying curve is essential. This shows that $k_1=\tr\rho(\alpha_3)\in\Z[\lambda]$. This implies that
$$\tr\rho(\alpha_2)^2-4= \disc(\Z[\lambda])\mid\disc(\Z[k_1])$$
which shows that $t=\tr\rho(\alpha_2)\in\Z$ belongs to a finite set, say $S\subset\Z\setminus\{\pm2\}$. This proves the claim.
\end{proof}

Thus, to prove Case I of Theorem \ref{refinement}, it suffices to show that $C_{\sigma,t}(\Z)$ decomposes into finitely many $\langle\tau_{a}\rangle$-orbits for each $t\in S$ given in the above claim. So fix $t\in S$. Let $c$ denote the simple closed curve on $\Sigma_1$ underlying the loop $\alpha_2$. Then $\Sigma|c$ is a surface of type $(0,4)$, with boundary curves
$$\del(\Sigma|c)=c_1\sqcup c_2\sqcup c_3\sqcup c_4$$
where $c_1$ and $c_2$ are the boundary curves of $\Sigma$, and the curves $c_3$ and $c_4$ correspond to $c$. Let $k'=(k_1,k_2,t,t)\in\A^4(\C)$. Pullback of representations via the immersion $\Sigma|c\to\Sigma$ induces a nonconstant morphism from the algebraic curve $C\subset X_k$ into $X_{k'}(\Sigma|c,\SL_2)$. Its image is contained in an algebraic curve of the form described in Lemma \ref{dehnlem}(1), which contains at most finitely many $\langle\tau_a\rangle$-orbits by the said lemma. Thus, $C_{\sigma,t}(\Z)$ consists of finitely many $\langle\tau_a\rangle$-orbits, as desired.\\

\noindent\textbf{Case II.} Suppose $a\subset\Sigma$ is a separating essential curve such that $\tr\rho(a)=-2$. Let $\Sigma|a=\Sigma_1\sqcup\Sigma_2$ where $\Sigma_1$ is of type $(1,1)$ and $\Sigma_2$ is of type $(0,3)$. As in the study of Case I, up to $\Gamma(\Sigma)$-action, we may assume that the surface $\Sigma_1$ is obtained by taking a closed tubular neighborhood of the union of images of the simple loops $\alpha_1$ and $\alpha_2$. By Theorem \ref{11thm}(2), we may thus assume that $\rho$ is of the form
$$\begin{bmatrix}1 & 0 & 0 & *\\ 0 & 1 & 0 & *\\ 0 & 0 & 1 & *\\ 0 & 0 & 0 & 1\end{bmatrix}\quad\text{or}\quad\begin{bmatrix}1 & 3 & 3u & *\\ 0 & 1 & 3u & *\\ 0 & 0 & 1 & *\\ 0 & 0 & 0 & 1\end{bmatrix}$$
for some $u\in\{\pm1\}$. In the first case where $\tr\rho(\alpha_1)=\tr\rho(\alpha_2)=\tr\rho(\alpha_1\alpha_2)=0$, the defining equations for $X_k$ force $k_1+k_2=0$, contradicting our hypothesis. So we may assume that $\tr\rho(\alpha_1)=3$ and $\tr\rho(\alpha_2)=\tr\rho(\alpha_1\alpha_2)=3u$.

For $u\in\{\pm1\}$, let $C_u\subset X_k(\Sigma)$ be the algebraic curve given by the equations $\tr(\alpha_1)=3$ and $\tr(\alpha_2)=\tr(\alpha_1\alpha_2)=3u$. To prove Case II of Theorem \ref{refinement}, it suffices to show that $C_u(\Z)$ for each $u\in\{\pm1\}$ consists of finitely many $\langle\tau_{a}\rangle$-orbits. Let $c$ now denote the simple closed curve on $\Sigma_1$ underlying the loop $\alpha_1$, so that $\tr\rho(c)=3$. Then $\Sigma|c$ is a surface of type $(0,4)$, with boundary curves
$$\del(\Sigma|c)=c_1\sqcup c_2\sqcup c_3\sqcup c_4$$
where $c_1$ and $c_2$ are the boundary curves of $\Sigma$, and the curves $c_3$ and $c_4$ correspond to $c$. Let $k'=(k_1,k_2,3,3)\in\A^4(\C).$
Pullback of representations under the immersion $\Sigma|c\to\Sigma$ induces a nonconstant morphism from the algebraic curve $C_u\subset X_k$ into $X_{k'}(\Sigma|c,\SL_2)$. Its image is contained in an algebraic curve of the form described in Lemma \ref{dehnlem}(2), which contains at most finitely many $\langle\tau_a\rangle$-orbits by the said lemma. Thus, $C_u(\Z)$ consists of finitely many $\langle\tau_a\rangle$-orbits, as desired.\\

\noindent\textbf{Case III.} Suppose $a\subset\Sigma$ is a nonseparating curve such that $\tr\rho(a)=\pm2$, and suppose that no separating essential curve on $\Sigma$ has trace $\pm2$ under $\rho$. Up to $\Gamma(\Sigma)$-action, we may assume that $a$ is the curve underlying the simple loop $\alpha_1$.

We claim that the restriction $\rho|(\Sigma|a)$ belongs to one of finitely many $\Gamma(\Sigma|a)$-orbits in $X(\Sigma|a,\SL_2)$ determined by the boundary condition $k=(k_1,k_2)$. Indeed, we are done by Theorem \ref{fin} if there is no essential curve on $c\subset\Sigma|a$ such that $\tr\rho(a)=\pm2$, so let us assume that there exist such a $c$. Since no separating essential curve on $\Sigma$ has trace $\pm2$ under $\rho$ by assumption, we may assume that the image of $c$ in $\Sigma$ is nonseparating. By choosing the generating loops of $\pi_1(\Sigma|a)$ judiciously, we may assume that $\rho|(\Sigma|a)$ belongs into an algebraic curve of the form described in Lemma \ref{dehnlem}(3). By Lemma \ref{dehnlem}(3), it follows that $\rho|(\Sigma|a)$ belongs to one of finitely many $\Gamma(\Sigma|a)$-orbits in $X(\Sigma|a,\SL_2)$ determined by $k=(k_1,k_2)$.

Thus, let us assume without loss of generality that $\rho|(\Sigma|a)$ is one of finitely many points in $X(\Sigma|a,\SL_2)$. Let $b$ be an essential curve in $\Sigma|a$ whose image in $\Sigma$ is a separating essential curve. By our assumption, there is a constant $K=K(k_1,k_2)$ such that $|\tr\rho(b)|\leq K$. Moreover, by our assumption on $\rho$, we have $\tr\rho(b)\neq\pm2$. Let us write
$$(\Sigma|a)|b=\Sigma_1'\sqcup\Sigma_2$$
where $\Sigma_1'$ and $\Sigma_2$ are each a surface of type $(0,3)$, such that the boundary curves of $\Sigma_1'$ map onto $a$ and $b$ under the immersion $\Sigma_1'\to\Sigma$ while the boundary curves of $\Sigma_2$ map to $b$ and $\del\Sigma=c_1\sqcup c_2$. We remark that $\rho|\Sigma_1'$ must be irreducible, seeing as the monodromy traces of $\rho|\Sigma_1'$ along two of the boundary curves (corresponding to $a$) are both $\pm2$ while the trace along the remaining one (corresponding to $b$) is not. It follows \emph{a fortiori} that $\rho|(\Sigma|a)$ is irreducible. Let $\Sigma_1$ denote the component of $\Sigma|b$ that is of type $(1,1)$. It follows, by the paragraph on nonseparating curves in \cite[Section 2.2.3]{Whang2}, that the locus
$$C=\{\rho'\in X_k:\rho'|(\Sigma|a)=\rho|(\Sigma|a)\}\subset X_k$$
is an algebraic curve whose image under the restriction morphism $X_k\to X(\Sigma_1)$ is nonconstant. It follows by arguing as in the proof of Theorem \ref{11thm}(3) that this image of $C$ in $X(\Sigma_1)$ consists of at most finitely many $\langle\tau_a\rangle$-orbits. Thus, \emph{a fortiori} the curve $C$ has finitely many $\langle\tau_a\rangle$-orbits, whence $\rho$ belongs to the $\Gamma(\Sigma)$-orbit of one of finitely many points in $X_k(\Z)$, as desired.

This completes the proof of Theorem \ref{refinement}.
\end{proof}

We now prove Theorem~\ref{mainthm2}.

\begin{proof}[Proof of Theorem \ref{mainthm2}]
Let $p(\lambda)\in\Z[\lambda]$ be a monic reciprocal polynomial of degree $4$ with $\disc(p)\neq0$. We would like to show that the $B_4$-invariant subvariety
$$V_p(4)=\{s\in V(4):\det(\lambda+s^{-1}s^T)=p(\lambda)\}\subset V(r)$$
contains at most finitely many integral $B_4$-orbits. By Theorem~\ref{mainthm}, Proposition~\ref{prop:chekhovmazzocco} and \ref{prop:bdytracepoly}, the subvariety $V_p(4)$ is isomorphic to the disjoint union
\[
V_p(4)\cong\coprod_k X_k(\Sigma_{1,2},\SL_2),
\]
where $k=(k_1,k_2)\in\A^2(\C)$ are such that
\[
\lambda^4-k_1k_2\lambda^3+(k_1^2+k_2^2-2)\lambda^2-k_1k_2\lambda+1=p(\lambda).
\]
By Theorem~\ref{refinement} and the assumption that $\disc(p)\neq0$, each $X_k(\Sigma_{1,2},\SL_2)(\Z)$ contains at most finitely many integral $\Gamma(\Sigma)$-orbits. Theorem~\ref{mainthm2} then follows from the compatibility of integral structures on $X_k(\Sigma_{1,2},\SL_2)$ and $V_p(4)$ proved in Proposition~\ref{prop:integralcharStokes}.
\end{proof}

\section{Exceptional collections} \label{sect:7}
In this section, we apply the isomorphism $A_P(r,4)\simeq X_k(\Sigma_{g,n},\SL_2)$ and the structure results of integral points on $X_k(\Sigma_{g,n},\SL_2)$ established in previous sections, to obtain the finiteness of possible Gram matrices of full exceptional collections (up to mutations) of certain triangulated categories.

We start with recalling some of the theory of exceptional collections developed by Bondal, Gorodentsev, Polishchuk, Rudakov, and many others. The reader is referred to the original papers \cite{Bondal90,BondalPolishchuk,GorRud} for more details. 

Let $\Dcal$ be a triangulated category. An object $E\in\Dcal$ is called \emph{exceptional} if
\[
\Hom_\Dcal^0(E,E)=\C \text{ \ and \ } \Hom_\Dcal^k(E,E)=0 \text{ for all } k\in\Z\backslash\{0\}.
\]
An ordered collection of exceptional objects $\{E_1,\ldots,E_r\}$ is called an \emph{exceptional collection} if for any $r\geq i>j\geq1$,
\[
\Hom_\Dcal^k(E_i,E_j)=0 \text{ for all } k\in\Z.
\]
An exceptional collection $\{E_1,\ldots,E_r\}$ is called \emph{full} if for any object $E\in\Dcal$,
\[
\Hom_\Dcal^k(E_i,E)=0 \text{ for all } 1\leq i\leq r \text{ and all } k\in\Z \implies E\simeq0.
\]

Given two objects $E$ and $F$ of $\Dcal$, one defines objects $\L_EF$ and $\R_FE$ of $\Dcal$ (called \emph{left} and \emph{right mutation}, respectively) by the distinguished triangles
\[
\L_EF\rightarrow\Hom^\bullet_\Dcal(E,F)\otimes E\rightarrow F \text{ \ and \ }
E\rightarrow\Hom^\bullet_\Dcal(E,F)^*\otimes F\rightarrow\R_FE.
\]
A mutation of an exceptional collection $\Ecal=\{E_1,\ldots,E_r\}$ is defined via a mutation of a pair of adjacent objects in the collection as follows:
\[
\L_i\Ecal\coloneqq\{E_1,\ldots,E_{i-1},\L_{E_i}E_{i+1},E_i,E_{i+2},\ldots,E_r\},
\]
\[
\R_i\Ecal\coloneqq\{E_1,\ldots,E_{i-1},E_{i+1},\R_{E_{i+1}}E_i,E_{i+2},\ldots,E_r\}.
\]

\begin{theorem}[\cite{Bondal90,GorRud}]
A mutation of an exceptional collection is again an exceptional collection. Moreover, the following relations hold:
\[
\L_i\R_i=\R_i\L_i=\id,\ \ \L_i\L_{i+1}\L_i=\L_{i+1}\L_i\L_{i+1},\ \ \R_i\R_{i+1}\R_i=\R_{i+1}\R_i\R_{i+1},
\]
\[
\L_i\L_j=\L_j\L_i \text{ and } \R_i\R_j=\R_j\R_i \text{ if } |i-j|\neq1.
\]
Hence the braid group $B_r$ acts on the set of exceptional collections of length $r$ in $\Dcal$ by left (or right) mutations.
\end{theorem}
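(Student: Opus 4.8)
The plan is to reconstruct the classical argument of Bondal and Gorodentsev--Rudakov, working throughout in a $\C$-linear Ext-finite (idempotent-complete) triangulated category $\Dcal$, so that for an \emph{exceptional pair} $(E,F)$ — meaning $E,F$ are exceptional with $\Hom^\bullet_\Dcal(F,E)=0$ — the object $\Hom^\bullet_\Dcal(E,F)\otimes E$ is a finite direct sum of shifts of $E$ and the mutation triangles are defined. The heart of the well-definedness statement is the case of a single pair. Applying the cohomological functor $\Hom^\bullet_\Dcal(E,-)$ to the triangle $\L_EF\to\Hom^\bullet_\Dcal(E,F)\otimes E\to F\to$, its middle arrow becomes the composition pairing $\Hom^\bullet_\Dcal(E,F)\otimes\Hom^\bullet_\Dcal(E,E)\to\Hom^\bullet_\Dcal(E,F)$, which is an isomorphism because $\Hom^\bullet_\Dcal(E,E)=\C$ is concentrated in degree $0$; hence $\Hom^\bullet_\Dcal(E,\L_EF)=0$. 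Feeding the same triangle into $\Hom^\bullet_\Dcal(-,E)$, $\Hom^\bullet_\Dcal(\L_EF,-)$, and $\Hom^\bullet_\Dcal(-,\L_EF)$, and using $\Hom^\bullet_\Dcal(F,E)=0$ together with the exceptionality of $E$ and $F$, shows that $\L_EF$ is exceptional and $(\L_EF,E)$ is an exceptional pair. The dual computation — e.g.\ carried out in $\Dcal^{\op}$, where $\L$ and $\R$ are interchanged and the order of a collection is reversed — does the same for right mutation.

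For a length-$r$ collection $\Ecal=(E_1,\dots,E_r)$, the operations $\L_i$ and $\R_i$ alter only the objects in positions $i,i+1$; the orthogonality of the new object $\L_{E_i}E_{i+1}$ against an untouched $E_j$ (with $j\notin\{i,i+1\}$) is read off the mutation triangle, since $\L_{E_i}E_{i+1}$ is an iterated extension of shifts of $E_i$ and $E_{i+1}$, both of which already satisfy the required relations with $E_j$. This proves the first assertion. The identity $\L_i\R_i=\R_i\L_i=\id$ likewise reduces to pairs: comparing the two defining triangles via the octahedral axiom gives $\R_F\L_EF\cong F$ and $\L_{\R_FE}\R_FE\cong E$, so the $\L_i$ are bijections of the set of length-$r$ exceptional collections with inverses $\R_i$. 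Finally, $\L_i\L_j=\L_j\L_i$ for $|i-j|\ge2$ is immediate, since then $\{i,i+1\}\cap\{j,j+1\}=\emptyset$.

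The substance is the braid relation $\L_i\L_{i+1}\L_i=\L_{i+1}\L_i\L_{i+1}$. As both sides touch only positions $i,i+1,i+2$, it suffices to verify it for $r=3$ on an exceptional triple $(A,B,C)$. Unwinding the definitions gives
\[
\L_1\L_2\L_1(A,B,C)=\bigl(\L_{\L_AB}\L_AC,\ \L_AB,\ A\bigr),\qquad \L_2\L_1\L_2(A,B,C)=\bigl(\L_A\L_BC,\ \L_AB,\ A\bigr),
\]
so the braid relation is equivalent to the single identity $\L_{\L_AB}\L_AC\cong\L_A\L_BC$. I would prove this via semiorthogonal decompositions: the thick subcategory $\langle A,B,C\rangle$ is admissible, each adjacent mutation implements the re-bracketing $\langle E,F\rangle=\langle\L_EF,E\rangle$, and iterating in the two orders yields
\[
\langle A,B,C\rangle=\bigl\langle\L_{\L_AB}\L_AC,\ \L_AB,\ A\bigr\rangle=\bigl\langle\L_A\L_BC,\ \L_AB,\ A\bigr\rangle,
\]
two semiorthogonal decompositions with identical second and third components. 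The leftmost component of a semiorthogonal decomposition equals the orthogonal complement of the span of the remaining ones inside the ambient category; since $\langle\L_AB,A\rangle=\langle A,B\rangle$, both $\L_{\L_AB}\L_AC$ and $\L_A\L_BC$ generate the admissible subcategory $\langle A,B\rangle^{\perp}\cap\langle A,B,C\rangle$, and identifying each with the image of $C$ under the projection onto this subcategory pins them down to the same object. (Alternatively, $\L_{\L_AB}\L_AC\cong\L_A\L_BC$ can be proved by a direct octahedral diagram chase on the five triangles involved, as in \cite{Bondal90,GorRud}.) The corresponding identity for $\R$ follows by applying this in $\Dcal^{\op}$.

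With these relations in hand, $\sigma_i\mapsto\L_i$ respects Artin's presentation of $B_r$, and since each $\L_i$ is invertible this extends to a homomorphism from $B_r$ to the group of permutations of the set of length-$r$ exceptional collections of $\Dcal$, i.e.\ an action; as $\R_i=\L_i^{-1}$, the right mutations give the same action precomposed with $\sigma_i\mapsto\sigma_i^{-1}$. I expect the main obstacle to be the identity $\L_{\L_AB}\L_AC\cong\L_A\L_BC$: making the semiorthogonal-decomposition bookkeeping fully rigorous — admissibility of $\langle A,B,C\rangle$, and that an adjacent mutation literally implements both the change of decomposition and the relevant projection on the nose, with no spurious shift — or, on the hands-on route, organizing the octahedral diagram chase without losing track of shifts. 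Everything else is formal manipulation of distinguished triangles and of the evaluation isomorphism used in the single-pair step.
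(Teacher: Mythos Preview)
The paper does not supply a proof of this theorem: it is quoted as background from \cite{Bondal90,GorRud} and used without further argument. So there is no ``paper's proof'' to compare against; your task was effectively to reconstruct the classical argument, and you have done so correctly in outline.

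A couple of small points. In the paragraph on $\L_i\R_i=\id$, your displayed identities are garbled: applying $\R_i$ to $\L_i(E,F)=(\L_EF,E)$ produces $(E,\R_E\L_EF)$, so the identity you need is $\R_E\L_EF\cong F$ (not ``$\R_F\L_EF$''), and dually $\L_F\R_FE\cong E$. The mechanism you indicate is right: from the defining triangle and $\Hom^\bullet(F,E)=0$ one gets $\Hom^\bullet(\L_EF,E)\cong\Hom^\bullet(E,F)^\ast$, so the $\R$-mutation triangle for the pair $(\L_EF,E)$ coincides with the original $\L$-mutation triangle, forcing $\R_E\L_EF\cong F$. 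For the braid relation, your semiorthogonal-decomposition argument is the cleanest modern packaging and is fully rigorous once one notes that for an exceptional pair $(E,F)$ the left mutation $\L_EF$ is by construction the image of $F$ under the right adjoint to the inclusion ${}^{\perp}E\hookrightarrow\langle E,F\rangle$; iterating, both $\L_{\L_AB}\L_AC$ and $\L_A\L_BC$ are the image of $C$ under the projection to ${}^{\perp}\langle A,B\rangle$ inside $\langle A,B,C\rangle$, hence canonically isomorphic.
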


Let $\Ecal=\{E_1,\ldots,E_r\}$ be an exceptional collection in $\Dcal$. Denote
\[
s_\Ecal\coloneqq\Big(\chi(E_i,E_j)\Big)_{1\leq i,j\leq r}
\]
the Gram matrix of $\Ecal$ with respect to the Euler pairing
\[
\chi(E,F)\coloneqq\sum_{k\in\Zb}(-1)^k\dim\Hom_\Dcal^k(E,F).
\]
Observe that $s_\Ecal\in V(r)$ is an integral unipotent upper triangular matrix. Mutations of exceptional collections act on the Gram matrices in the following way:
\[
s_{\L_i\Ecal}=
\begin{bmatrix}
\I_{i-1} & & & \\
& s_{i,i+1} & -1 & \\
& 1 & 0 & \\
& & & \I_{r-i-1}
\end{bmatrix}
\cdot s_\Ecal\cdot
\begin{bmatrix}
\I_{i-1} & & & \\
& s_{i,i+1} & 1 & \\
& -1 & 0 & \\
& & & \I_{r-i-1}
\end{bmatrix},
\]
\[
s_{\R_i\Ecal}=
\begin{bmatrix}
\I_{i-1} & & & \\
& 0 & 1 & \\
& -1 & s_{i,i+1} & \\
& & & \I_{r-i-1}
\end{bmatrix}
\cdot s_\Ecal\cdot
\begin{bmatrix}
\I_{i-1} & & & \\
& 0 & -1  & \\
& 1 & s_{i,i+1}  & \\
& & & \I_{r-i-1}
\end{bmatrix}.
\]
Note that the actions on the Gram matrices by left mutations are compatible with the braid group actions on the Stokes matrices defined in Section~\ref{sect:5.1}.

We recall a well-known relationship between the Serre functor and the Coxeter element associated to the Gram matrix of any full exceptional collection.

\begin{lemma}
\label{lemma:serrecoxeter}
Let $\Dcal$ be a triangulated category that admits a Serre functor $\Sb_\Dcal$ and a full exceptional collection of length $r$. Then there exists a monic reciprocal polynomial $p$ of degree $r$ such that $s_\Ecal\in V_p(r)$ for any full exceptional collection $\Ecal$ of $\Dcal$.
\end{lemma}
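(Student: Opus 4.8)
\textit{Proposal.} The plan is to take $p$ to be the characteristic polynomial of the operator $-[\Sb_\Dcal]$ on the Grothendieck group and show that, in the basis of $K_0$ furnished by \emph{any} full exceptional collection, this operator is represented by the matrix $-s_\Ecal^{-1}s_\Ecal^T$. First I would record the standard preliminary facts: since $\Ecal=\{E_1,\dots,E_r\}$ is full, every object of $\Dcal$ lies in the triangulated subcategory generated by the $E_i$, so the classes $[E_1],\dots,[E_r]$ span $K_0(\Dcal)\otimes\Q$; and since the Gram matrix $s_\Ecal=(\chi(E_i,E_j))_{ij}\in V(r)$ is invertible (unipotent upper triangular), a relation $\sum_i c_i[E_i]=0$ forces $(c_1,\dots,c_r)\,s_\Ecal=0$ and hence $c=0$. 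Thus $([E_1],\dots,[E_r])$ is a $\Q$-basis of $K_0(\Dcal)\otimes\Q$, and in particular every full exceptional collection has length $r=\rk K_0(\Dcal)$. The Serre functor, being an autoequivalence, induces an automorphism $[\Sb_\Dcal]$ of $K_0(\Dcal)$, and I would define $p(\lambda)\in\Z[\lambda]$ to be the characteristic polynomial of $-[\Sb_\Dcal]$ on $K_0(\Dcal)\otimes\Q$; this is monic of degree $r$ and patently independent of any choice.

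The heart of the argument is the identification of matrices. Serre duality provides functorial isomorphisms $\Hom_\Dcal^k(E,F)\cong\Hom_\Dcal^{-k}(F,\Sb_\Dcal E)^\ast$ for all $E,F$, hence $\chi(E,F)=\chi(F,\Sb_\Dcal E)$. Writing $[\Sb_\Dcal E_i]=\sum_\ell\Theta_{\ell i}[E_\ell]$, so that $\Theta=(\Theta_{\ell i})$ is the matrix of $[\Sb_\Dcal]$ in the basis $([E_1],\dots,[E_r])$, bilinearity of $\chi$ gives
\[
(s_\Ecal)_{ij}=\chi(E_i,E_j)=\chi(E_j,\Sb_\Dcal E_i)=\sum_\ell\Theta_{\ell i}\,\chi(E_j,E_\ell)=\sum_\ell(s_\Ecal)_{j\ell}\,\Theta_{\ell i}=(s_\Ecal\Theta)_{ji},
\]
so that $s_\Ecal^T=s_\Ecal\Theta$, i.e.\ $\Theta=s_\Ecal^{-1}s_\Ecal^T$. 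Consequently $-[\Sb_\Dcal]$ is represented by $-s_\Ecal^{-1}s_\Ecal^T$, and its characteristic polynomial is $\det(\lambda+s_\Ecal^{-1}s_\Ecal^T)=p(\lambda)$, a polynomial that does not depend on $\Ecal$. By the definition of $V_p(r)$ this is precisely the assertion $s_\Ecal\in V_p(r)$ for every full exceptional collection $\Ecal$ of $\Dcal$.

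It remains to note that $p$ is reciprocal, which is already recorded in Section~\ref{sect:5.1} for an arbitrary $s\in V(r)$; alternatively it follows in one line from $\det(s_\Ecal^{-1}s_\Ecal^T)=1$ together with the observation that $(s_\Ecal^{-1}s_\Ecal^T)^T=s_\Ecal\,s_\Ecal^{-T}$ is conjugate, via $s_\Ecal$, to $(s_\Ecal^{-1}s_\Ecal^T)^{-1}$, so that $s_\Ecal^{-1}s_\Ecal^T$ and its inverse share a characteristic polynomial. The argument is essentially formal; the one point that must be handled with care is the bookkeeping of the Serre-duality convention and transposes, so that $\Theta$ emerges as $s_\Ecal^{-1}s_\Ecal^T$ and not $s_\Ecal^{-T}s_\Ecal$. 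I would pin this down by sanity-checking against the collection $(\Ocal,\Ocal(1))$ on $\P^1$: there $s_\Ecal=\begin{pmatrix}1&2\\0&1\end{pmatrix}$, $s_\Ecal^{-1}s_\Ecal^T=\begin{pmatrix}-3&-2\\2&1\end{pmatrix}$, and this is indeed the matrix on $K_0$ of the Serre functor $(-)\otimes\Ocal(-2)[1]$, which confirms the convention.
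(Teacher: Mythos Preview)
Your proposal is correct and follows essentially the same approach as the paper: define $p$ as the characteristic polynomial of $-[\Sb_\Dcal]$ on the (numerical) Grothendieck group and identify the matrix of $[\Sb_\Dcal]$ in the basis $([E_1],\dots,[E_r])$ with $s_\Ecal^{-1}s_\Ecal^T$. If anything you are more thorough, supplying the Serre-duality computation that the paper leaves as ``easy to check'' and spelling out why $p$ is reciprocal.
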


\begin{proof}
Let $\Ecal=\{E_1,\ldots,E_r\}$ be a full exceptional collection of $\Dcal$. Then the classes $\{[E_1],\ldots,[E_r]\}\subset K_0^\num(\Dcal)$ form a basis of the numerical Grothendieck group $K_0^\num(\Dcal)$. It is easy to check that if we consider $v\in K_0^\num(\Dcal)$ as a column vector with respect to the basis $\{[E_1],\ldots,[E_r]\}$, then the induced automorphism $\Sb_\Dcal^\num\colon K_0^\num(\Dcal)\rightarrow K_0^\num(\Dcal)$ can be written as
\[
\Sb_\Dcal^\num(v)=s_\Ecal^{-1}s_\Ecal^Tv.
\]
Define the polynomial
\[
p(\lambda)\coloneqq\det(\lambda+\Sb_\Dcal^\num).
\]
Then it is clear that $p$ is a monic reciprocal polynomial of degree $r$, and $s_\Ecal\in V_p(r)$ for any full exceptional collection $\Ecal$.
\end{proof}

\begin{remark}
In the context of Fukaya--Seidel category of a Lefschetz fibration, the Coxeter identity in Proposition~\ref{proposition:coxeterStokes} is reminiscent of the relationship between the global monodromy and the Serre functor. We refer to \cite{Seidel} for more details.
\end{remark}

\begin{example}
\label{eg:derivedcategory}
Let $\Dcal=\Dcal^b\Coh(X)$ be the bounded derived category of coherent sheaves on a smooth projective variety $X$ of dimension $n$. The Serre functor is given by $\Sb_\Dcal=(-\otimes K_X)[n]$, where $K_X$ denotes the canonical bundle on $X$. Since $(-\otimes K_X)$ induces a unipotent operator on the Grothendieck group of $\Dcal$ (\cite[Lemma~3.1]{BondalPolishchuk}), the Serre operator $\Sb_\Dcal^\num$ satisfies the property that $(-1)^n\Sb_\Dcal^\num$ is unipotent.
Suppose that $\Dcal$ admits a full exceptional collection $\Ecal=\{E_1,\ldots,E_r\}$. By Lemma~\ref{lemma:serrecoxeter}, the Gram matrix $s_\Ecal$ satisfies the following properties:
\begin{itemize}
    \item $s$ is a unipotent upper triangular $r\times r$ matrix,
    \item $(-1)^ns^{-1}s^T$ is unipotent.
\end{itemize}
\end{example}

Let $\Dcal$ be a triangulated category that admits a full exceptional collection. It is interesting to understand all possible Gram matrices of full exceptional collections of $\Dcal$, up to the natural $B_r$-actions. Using the $B_r$-equivariant isomorphisms between $A_P(r,4)$ and the $\SL_2$-character variety, together with Diophantine results from Section \ref{sect:6},
we are able to establish finiteness result for Gram matrices of \emph{nondegenerate} full exceptional collections.

\begin{definition}
Let $\Ecal$ be an exceptional collection of length $r$ of $\Dcal$ and let $p$ be the characteristic polynomial of the matrix $-s_\Ecal^{-1}s_\Ecal^T$. We say $\Ecal$ is \emph{degenerate} if its Gram matrix $s_\Ecal\in V_p(r)$ lies in the image of a nonconstant morphism $\A^1\rightarrow V_p(r)$ from the affine line, and is said to be \emph{nondegenerate} otherwise.
\end{definition}

\begin{remark}
It would be interesting to give a categorical characterization of degenerate exceptional collections.
\end{remark}

\begin{theorem}
\label{thm:excepapp}
Let $\Dcal$ be a triangulated category admitting a full exceptional collection of length $4$ and a Serre functor $\Sb_\Dcal$. Then there is a finite list of integral Stokes matrices of rank $4$ such that, up to mutations, the Gram matrix of any \emph{nondegenerate} full exceptional collection of $\Dcal$ belongs to this list.

Moreover, if the discriminant of the polynomial $p(\lambda)=\det(\lambda+\Sb_\Dcal^\num)$ is nonzero,
then there is a finite list of integral Stokes matrices of rank four such that, up to mutations, the Gram matrix of any full exceptional collection of $\Dcal$ belongs to this list.
\end{theorem}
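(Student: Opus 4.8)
The plan is to translate the statement into a question about integral points on the variety $V_p(4)$ and then feed it into the Diophantine machinery of Section~\ref{sect:6}. First I would fix once and for all the polynomial $p(\lambda)=\det(\lambda+\Sb_\Dcal^\num)$ attached to $\Dcal$; since $\Sb_\Dcal^\num$ is an automorphism of the lattice $K_0^\num(\Dcal)$, the polynomial $p$ is monic, reciprocal, and has integer coefficients. By Lemma~\ref{lemma:serrecoxeter}, every full exceptional collection $\Ecal$ of $\Dcal$ has Gram matrix $s_\Ecal\in V_p(4)$, and since the Euler pairings $\chi(E_i,E_j)$ are integers, $s_\Ecal$ is an \emph{integral} point of $V_p(4)$. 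The formulas recorded in Section~\ref{sect:7} for the action of left mutations on Gram matrices are exactly the formulas defining the $B_4$-action on $V(4)$ in Section~\ref{sect:5.1}; hence the set of Gram matrices of full exceptional collections of $\Dcal$ is a $B_4$-stable subset of $V_p(4)(\Z)$, and two such collections are related by a sequence of mutations if and only if their Gram matrices lie in the same $B_4$-orbit. So it suffices to show that $V_p(4)(\Z)$ consists of finitely many $B_4$-orbits after discarding the degenerate points (for the first assertion) and, under the hypothesis $\disc(p)\neq0$, with nothing discarded (for the second).

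The second assertion is then immediate: when $\disc(p)\neq0$, Theorem~\ref{mainthm2} asserts precisely that $V_p(4)$ contains at most finitely many integral $B_4$-orbits, and picking one integral Stokes matrix from each orbit produces the required finite list (the Gram matrices of full exceptional collections forming a sub-collection of these orbits).

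For the first assertion, when $\disc(p)$ may vanish, I would argue through the identification $V_p(4)\cong\coprod_k X_k(\Sigma_{1,2},\SL_2)$ provided by Theorem~\ref{mainthm} together with Propositions~\ref{prop:chekhovmazzocco} and~\ref{prop:bdytracepoly}, the union running over pairs $k=(k_1,k_2)$ with $p_{k_1,k_2}=p$. Since $p$ determines $k_1k_2$ and $k_1^2+k_2^2$, there are only finitely many such $k$, so the disjoint union is finite; moreover an integral Stokes matrix $s$ reads off $k_1+k_2$ and $k_1k_2$ as integers, so the $k_i$ are conjugate algebraic integers, and a rational point cannot lie in a non-rational component, forcing $s$ into a piece with $k\in\Z^2$. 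For such $k$, the point $\rho\in X_k$ corresponding to $s$ has all seven basic $\SL_2$-traces $\tr\rho(\alpha_i)$, $\tr\rho(\alpha_i\alpha_j)$, $\tr\rho(\alpha_1\alpha_2\alpha_3)$ integral (six are entries of $s$, and the seventh, $\tr\rho(\alpha_1\alpha_3)=k_1$, is integral), hence $\rho\in X_k(\Z)$ in the sense of Section~\ref{sect:6}; this is the integral-structure compatibility already used for Theorem~\ref{mainthm2} via Proposition~\ref{prop:integralcharStokes}. A full exceptional collection is degenerate exactly when $s_\Ecal$ lies in the image of a nonconstant morphism $\A^1\to V_p(4)$; since $\A^1$ is connected, such a morphism factors through one component $X_k$, and under the isomorphism this is precisely the condition that the corresponding $\rho$ be degenerate in the sense of Definition~\ref{degenerate}. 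Finally, the $B_4$-action on $V_p(4)$ matches the action of the pure mapping class group of $\Sigma_{1,2}$, which for $r=4$ is isomorphic to $B_4$ and has finite index in the mapping class group relevant to $X_k$; so Theorem~\ref{dioph} gives that the nondegenerate integral points of each $X_k$ form finitely many $B_4$-orbits, and summing over the finitely many $k$ and choosing a representative Gram matrix in each orbit yields the finite list.

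The point requiring care, rather than genuine difficulty, is the bookkeeping in the third paragraph: checking that "degenerate" for a Gram matrix transports to degeneracy of the associated local system (immediate once one knows $V_p(4)\cong\coprod_k X_k$ is an isomorphism of varieties), that the integral structures agree piece by piece, and that only the pieces with $k\in\Z^2$ carry integral Stokes matrices. All of the substantive content is already packaged in Theorems~\ref{mainthm2} and~\ref{dioph} and in the isomorphisms of Sections~\ref{sect:5.1}–\ref{sect:6}; the genuinely hard question — a categorical characterization of which exceptional collections are degenerate — is not needed for this theorem.
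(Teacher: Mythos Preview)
Your proof is correct and follows essentially the same route as the paper's: reduce to integral points on $V_p(4)$ via Lemma~\ref{lemma:serrecoxeter}, pass to the components $X_k(\Sigma_{1,2},\SL_2)$ via Theorem~\ref{mainthm} and Propositions~\ref{prop:chekhovmazzocco}--\ref{prop:bdytracepoly}, and invoke Theorems~\ref{dioph} and~\ref{mainthm2} (equivalently~\ref{refinement}). Two small remarks: the ``if and only if'' in your first paragraph overstates what is needed and what is true (distinct collections can share a Gram matrix, but you only need the forward direction); and your finite-index comment is harmless but unnecessary, since for $r=4$ the $B_4$-action on $X_k$ already coincides with the full pure mapping class group action by the remark after Proposition~\ref{braidprop}.
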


\begin{proof}
By Lemma~\ref{lemma:serrecoxeter}, there exists a reciprocal polynomial $p$ of degree $4$ such that $s_\Ecal\in V_p(4)$ for any full exceptional collection $\Ecal$ of $\Dcal$.
Recall from Example~\ref{r4example} that $V_p(4)$ is isomorphic to a disjoint union of $B_4$-invariant varieties of the form $A_P(4,4)$ for some $P\in \Spin(4)\git\SO(4)$. By Theorem~\ref{mainthm}, for each $A_P(4,4)$ there is a $B_4$-equivariant isomorphism 
\[
A_P(4,4)\simeq X_k(\Sigma_{1,2},\SL_2)
\]
for some $k\in\C^2$.
Since the morphisms $X_k(\Sigma_{1,2},\SL_2)\to V_p(4)$ send integral points to integral points (Proposition~\ref{prop:integralcharStokes}), the theorem then follows from the Diophantine results on character varieties Theorems~\ref{dioph} and \ref{refinement}.
\end{proof}

We conclude the section with a few remarks.

\begin{remark}
Let $\Dcal=\Dcal^b\Coh(X)$ be the derived category of an algebraic surface $X$ admitting a full exceptional collection $\Ecal$ of length four. Then the Gram matrix $s_\Ecal$ has the property that $s_\Ecal^{-1}s_\Ecal^T$ is unipotent by Example~\ref{eg:derivedcategory}, and hence $s_{\Ecal}\in V_p(4)$ where $p(\lambda)=(\lambda-1)^4$. In terms of the matrix coefficients
\[
s=
\begin{bmatrix}
1 & a & e & d\\
0 & 1 & b & f\\
0 & 0 & 1 & c\\
0 & 0 & 0 & 1
\end{bmatrix}
\]
the variety $V_p(4)$ is given by
\begin{equation}
\label{equation:rank4unipotent}
\begin{cases}
a^2+b^2+c^2+d^2+e^2+f^2-abe-adf-bcf-cde+abcd=0,\\
ac+bd-ef=0.
\end{cases}
\end{equation}
The solutions to this system of Diophantine equations have been studied in \cite{dTdVdB}. It is proved in \cite[Theorem~A]{dTdVdB} that any integral solution to (\ref{equation:rank4unipotent}) is equivalent to one of the following solutions under the signed braid group actions:
\[
\begin{bmatrix}1&2&2&4\\0&1&0&2\\0&0&1&2\\0&0&0&1\end{bmatrix},
\text{ \ or \ }
\begin{bmatrix}1&n&2n&n\\0&1&3&3\\0&0&1&3\\0&0&0&1\end{bmatrix}
\text{ \ for \ }
n\in\N.
\]
Note that this does not contradict with the conclusion of Theorem~\ref{thm:excepapp}, since both of these types of matrices are degenerate points in $V_p(4)$ and $\Delta_p=0$.
\end{remark}

\begin{remark}
In general, the Gram matrix of an exceptional collection of length $r>4$ does not lie in the image of the composition
$$X_k(\Sigma_{g,n},\SL_2)\simeq A_P(r,4)\rightarrow A'_{P'}(r,4)\hookrightarrow V(r,4)\hookrightarrow V(r).$$
For instance, suppose that $X$ is an even dimensional smooth projective variety such that $\Dcal^b\Coh(X)$ admits a full exceptional collection $\Ecal$. Then $s_\Ecal^{-1}s_\Ecal^T$ is unipotent, hence $s_\Ecal+s_\Ecal^T$ is invertible. On the other hand, if $s$ is in the image of the above composition, then $s+s^T$ is not invertible when $r>4$.
\end{remark}

\begin{comment}
\begin{remark}
The process of symmetrizing a Gram matrix $s$ of an exceptional collection $(s\mapsto s+s^T)$ has a natural categorical interpretation. Indeed, for a homologically smooth dg algebra $A$ and an integer $n$, there is another homologically smooth dg algebra $\Pi_N(A)$, called the \emph{$n$-Calabi--Yau completion}, which carries a canonical exact $n$-Calabi--Yau structure (see \cite[Theorem~6.3]{Keller} and \cite[Theorem~1.1]{KellerErratum}). In particular, the category $\Dcal_\fd(\Pi_n(A))$, the full subcategory of $\Dcal(\Pi_n(A))$ consisting of dg modules with finite dimensional total cohomology, is a $n$-Calabi--Yau category.

There is a canonical projection $\Pi_n(A)\rightarrow A$ which induces a functor $i\colon\Dcal_\fd(A)\rightarrow\Dcal_\fd(\Pi_n(A))$. The functor $i$ sends exceptional objects to spherical objects. More generally, for $E,F\in\Dcal_\fd(A)$ we have
\[
\Hom^\bullet_{\Pi_n(A)}(i(E),i(F))=\Hom^\bullet_A(E,F)\oplus\Hom^\bullet_A(F,E)^*[-n],
\]
see \cite[Lemma~4.4]{Keller}. Hence the symmetrization $s+s^T$ of a Gram matrix $s$ of an exceptional collection $\Ecal$ of $\Dcal_\fd(A)$ is the same as the Gram matrix of the spherical collection $i(\Ecal)$ of $\Dcal_\fd(\Pi_n(A))$ if $n$ is an even integer. Note that one gets the skew-symmetrization $s-s^T$ from the $n$-Calabi--Yau completion if $n$ is odd.
\end{remark}
\end{comment}

\end{document}